\newtheorem{thm}{Theorem}[]
\newtheorem{lem}{Lemma}[section]
\newtheorem{rmk}{Remark}[section]
\theoremstyle{definition}
\numberwithin{equation}{section} \theoremstyle{remark}
\title[Deviation for extremals of large chiral non-Hermitian ]{\bf  Deviation and moderate deviation for extremal eigenvalues of large Chiral non-Hermitian random matrices}
\author{Yutao Ma}
\address{Yutao MA\\ School of Mathematical Sciences $\&$ Laboratory  of Mathematics and Complex Systems of Ministry of Education, Beijing Normal University, 100875 Beijing, China.} 
\thanks{The research of Yutao Ma was supported in part by NSFC 12171038, 11871008 and 985 Projects.}
\email{mayt@bnu.edu.cn}
\author{S\MakeLowercase{iyu} W\MakeLowercase{ang}}
\address{Siyu Wang\\ School of Mathematical Sciences $\&$ Laboratory of Mathematics and Complex Systems of Ministry of Education, Beijing Normal University, 100875 Beijing, China.}
\email{wang\_siyu@mail.bnu.edu.cn}
\begin{document}
\maketitle

\begin{abstract}

Consider the chiral non-Hermitian random matrix ensemble with parameters $n$ and $v,$ and let $(\zeta_i)_{1\le i\le n}$ be its $n$ eigenvalues with positive $x$-coordinate. In this paper, we establish deviation probabilities and moderate deviation probabilities for the spectral radius $(n/(n+v))^{1/2}\max_{1\le i\le n}|\zeta_i|^2,$ as well as $(n/(n+v))^{1/2}\min_{1\le i\le n}|\zeta_i|^2.$    
\end{abstract} 

{\bf Keywords:} deviation probability; moderate deviation probability; non-hermitian random matrix; large chiral random matrix 

{\bf AMS Classification Subjects 2020:} 60F10, 15B52

\section{Introduction}\label{chap:intro}
Hermitian random matrices have attracted  tensive attention of experts from Mathematics, Physics, Statistics and so on. Many magnificent results have been established. For example, as  weak limits of the empirical measures of the corresponding eigenvalues of Hermitian random matrices, the semi-circular law, the Marchenko-Pastur law, as well as Wachter law appeared. Also, the Tracy-Widom law and the Gumbel distribution showed up as the limits of the extremal eigenvalues. The readers are referred to \cite{ABDbook}, \cite{AnG}, \cite{BSbook}, \cite{BDSbook}, and \cite{Forrester} etc for details. 

Researchers are also interested in non-Hermitian random matrices, which have important applications ranging from the fractional Hall effect to  quantum chromodynamics. The readers may see \cite{DiF}, \cite{KSbook} and \cite{Stephanov} for necessary information and further applications. The eigenvalues of such kind matrices are complex. Researchers revealed some properties on the Ginibre ensembles, the product of truncations Haar unitary matrices, the elliptic ensembles, and the chiral non-Hermitian random matrix ensembles. 

In this parper, we will study the spectral radius of  a particular case of the chiral non-Hermitian random matrix ensembles. The spectral radius of a matrix $M$, with eigenvalues $\zeta_1, \ldots, \zeta_n$, is defined as $\max_{1\leq j \leq n}|\zeta_j|$. 
The study of edge behavior in classical ensembles of Gaussian Hermitian matrices has famously led to the discovery of Tracy-Widom distributions.
 Pioneering work by Rider \cite{Rider03} and Rider and Sinclair \cite{Rider 14} has delved into the spectral radii of the real, complex, and quaternion Ginibre ensembles. 
 They demonstrated that appropriately scaled spectral radii converge in law to a Gumbel distribution, contrasting with the Tracy-Widom distribution. 
 Further generalizations of this result can be found in the work of Chafai and P\'{e}ch\'{e} \cite{Chafaii}. 
Additionally, recent research by Lacroix-A-Chez-Toine \emph{ et al.} \cite{Lacroix} has explored the extreme statistics in the complex Ginibre ensemble, showing an intermediate fluctuation regime between large and typical fluctuations to the left of the mean. Their findings indicate that this interpolation also applies to general Coulomb gases.

For integers $n\ge 1$ and $v\ge 0,$ let $P$ and $Q$ be $(n+v)\times n$ matrices with i.i.d centered complex Gaussian entries of variance $1 /(4 \mathrm{n}).$ These are the building
blocks of the two correlated random matrices
$$
\Phi = \sqrt{1+\tau} P+\sqrt{1-\tau} Q
\text{\quad  and \quad }
\Psi = \sqrt{1+\tau} P-\sqrt{1-\tau} Q.
$$
Here, $\tau \in[0,1]$ is a non-Hermiticity parameter. Notably, for $\tau=0$, $\Phi$ and $\Psi$ are uncorrelated Gaussian random matrices, while for $\tau=1$, they become perfectly correlated, i.e., $\Phi = \Psi$. We are interested in the eigenvalue distribution of the $(2 n+\nu) \times(2 n+\nu)$ random Dirac matrix
$$\mathcal{D}=\left(\begin{matrix}{}
  0 &\Phi \\
 \Psi^*  & 0
\end{matrix} \right),
$$
where $ \Psi^*$ is the complex conjugate matrix of $\Psi$. 
This matrix was first undertaken by Osborn \cite{osborn}, in the context of a study relating to quantum chromodynamics.
The spectrum of $\mathcal{D}$ consists of an eigenvalue of multiplicity $\nu$ at the origin, and $2 n$ complex eigenvalues $\left\{ \pm \zeta_k\right\}_{k=1}^n$ which come in pairs with opposite sign. 
Without loss of generality, we choose the $\zeta_k$ to have non-negative real part. Thus the joint density function of $\zeta_1, \cdots, \zeta_{n}$ is proportional to   
$$
\prod_{1 \leq j<k \leq n}\left|z_{j}^{2}-z_{k}^{2}\right|^{2} \cdot \prod_{j=1}^{n}\left|z_{j}\right|^{2(v+1)} \exp \left(\frac{2 \tau n \operatorname{Re}\left(z_{i}^{2}\right)}{1-\tau^{2}}\right) K_{v}\left(\frac{2 n\left|z_{j}\right|^{2}}{1-\tau^{2}}\right)
$$
for all $z_1, \cdots, z_n\in \mathbb{C}$ with ${\rm Re}(z_j)>0$ for each $j,$ where $K_v$ is the modified Bessel function of the second kind (more details on $K_v$ are given in Lemma \ref{kp} below). Here and later, the density is relative to the Lebesgue  measure on $\mathbb{C}^n.$ For details on the model, we refer to \cite{osborn} with $\mu=\sqrt{(1-\tau)} / \sqrt{(1+\tau)}, \mathcal{D}_{I I}=i \mathcal{D}, \alpha=2 /(1+\tau), A=\sqrt{1+\tau} P, B=i \sqrt{1+\tau} Q, C=i \Phi$ and $D=i \Psi^*.$

In previous research, the limiting empirical distribution associated with the Dirac matrix $D$ for general $\tau \in [0,1)$ was established in \cite{ABK}. For the case $\tau = 1$, the empirical measures converge weakly to the Marckenko-Pastur law \cite{AnG, BSbook, MP}.

Let $\zeta_1, \cdots, \zeta_n$ have density function as in \eqref{density}.
Given $\tau\in [0, 1),$ and $v\ge 0$ is fixed, the authors of \cite{AB} proved that  the $\max_{1\le j\le n} {\rm Re}(\zeta_j)$ converged to the Gumbel distribution and also with different scaling, $({\rm Re}(\zeta_j), {\rm Im}(\zeta_j)),$ as a point process, converged to a Poisson  process.  For the case $\tau =1,$ the largest corresponding eigenvalue converges to the Tracy-Widom distribution  (Johansson \cite{Johansson}).

For the particular case $\tau=0,$  Chang, Jiang and Qi in \cite{JQ}  established a central limit theorem for $(n/(n+v))^{1/2}\max_{1\le j\le n}|\zeta_j|^2,$ which says that 
\begin{equation}\label{sjq0}\sqrt{\log s_n}\left(4\sqrt{s_n}\left(\left(\frac{n}{n+v} \right)^{1/4}\max_{1\le j\le n}|\zeta_j|-1\right)-a(s_n)\right)\end{equation}  converges weakly to the Gumbel distribution. 
The limit \eqref{sjq0} is equivalent to 
the following limit 
\begin{equation}\label{sjq}\sqrt{\log s_n}	\mathbb{P}\left(\left(\frac{n}{n+v}\right)^{1/2}\max_{1\le i\le n}|\zeta_i|^2\ge 1+y \right)  \sim  	1 - G\big(\sqrt{\log s_n}(2\sqrt{s_n} y -a(s_n))\big)\end{equation} 
for $y = a(s_n)/(2\sqrt{s_n}) + O ( \sqrt{\log s_n /s_n} ).$
Here, $s_n=n(n+v)/(2n+v),$ and the function $a$ is defined as $a(y)=(\log y)^{1/2}-(\log y)^{-1/2}\log(2\pi \log y)$, and $G(y)=\exp (-\exp (-y))$ is the distribution function of the Gumbel distribution. This result directly motivated our paper.

 Since $\lim_{n\to\infty}s_n=\infty,$ we observe from \eqref{sjq} that $(n/(n+v))^{1/4}\max_{1\le j\le n}|\zeta_j|$ converges to $1$ in probability. 
The main aim of this paper is to provide the deviation probabilities of $(n/(n+v))^{1/2}\max_{1\le j\le n}|\zeta_j|^2$ as in \eqref{sjq} for $y=O(1)$, as well as the moderate deviation probabilities for $(n/(n+v))^{1/2}\max_{1\le j\le n}|\zeta_j|^2$ when $\frac{a(s_n)}{s_n} \ll y \ll 1$.
One may also wonder whether $n/(n+v))^{1/2}\min_{1\le j\le n}|\zeta_j|^2$ will converge to $0,$ the left endpoint of the weak limit of the empirical measure of $(n/(n+v))^{1/4}|\zeta_j|,$ and what is the speed of this convergence.

 When $\tau=0,$ the density function becomes 
\begin{equation}\label{density}
	f(z_1, \cdots, z_n)=C\prod_{1\le j<k\le n}|z_j^2-z_k^2|^2 \prod_{j=1}^n |z_j|^{2(v+1)}K_v(2n |z_j|^2).
\end{equation} 
 Here are the deviation probabilities we obtain for $\max_{1\le j\le n}|\zeta_j|^2$ and $\min_{1\le j\le n}|\zeta_j|^2,$ respectively.  
 
 \subsection{Large deviation probabilities for $\max_{1\le j\le n}|\zeta_j|^2$ and $\min_{1\le j\le n}|\zeta_j|^2$} 
 \begin{thm}\label{extremaxdp} 
  Let $(\zeta_1, \cdots, \zeta_{n})$ be random vector whose density function is given in \eqref{density}.  Suppose that $\alpha=\lim\limits_{n\to\infty} v/n\in [0, \infty].$ Then, 
  $$\lim_{n\to\infty}\frac{1}{n}\log\mathbb{P}\left(\left(\frac{n}{n+v}\right)^{1/2}\max_{1\le i\le n}|\zeta_i|^2\ge x\right)=-I^{(r)}_{\alpha}(x)$$ for any $x>1$ and 
  $$\lim_{n\to\infty}\frac{1}{n^2}\log\mathbb{P}\left(\left(\frac{n}{n+v}\right)^{1/2}\max_{1\le i\le n}|\zeta_i|^2\le x\right)=-I^{(l)}_{\alpha}(x)$$ for any $0<x<1.$
   Here, $I_{\alpha}^{(r)}$ and $I_{\alpha}^{(l)}$ are, respectively, given by 
  $$ I_{\alpha}^{(r)}(x)= \alpha\log\frac{1+\alpha}{\alpha+\kappa_{\alpha}(x)}+2(\kappa_{\alpha}(x) -\log x-1)	
$$ and 
$$ I_{\alpha}^{(l)}(x)= \left(\alpha+\frac{\alpha^2}{2}\right)\log\frac{1+\alpha}{\kappa_{\alpha}(x)+\alpha}-\log x-\frac{(\alpha+3-\kappa_{\alpha}(x))(1-\kappa_{\alpha}(x))}{2} 	
$$
and $\kappa_{\alpha}(x)=\frac{2(1+\alpha)x^2}{\alpha+\sqrt{\alpha^2+4(1+\alpha) x^2}}$ for $\alpha\in (0, +\infty).$ For $\alpha=0$ or $\alpha=+\infty,$ we take the corresponding limits, respectively. 
 \end{thm}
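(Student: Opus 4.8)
The plan is to follow the standard Coulomb-gas / orthogonal polynomial route: first rewrite the extremal-eigenvalue probability as an integral against the correlated product measure, then extract the correct large-deviation rate via a Laplace-type (Varadhan) analysis. Concretely, I would begin from the joint density \eqref{density} and use the Vandermonde structure $\prod_{j<k}|z_j^2-z_k^2|^2\prod_j|z_j|^{2(v+1)}K_v(2n|z_j|^2)$ together with the confluent-hypergeometric moment formula for $|z_j|^2$ to diagonalize the model: after integrating out the angular variables, the squared moduli $r_j=|\zeta_j|^2$ form a determinantal process on $(0,\infty)$ whose one-dimensional correlation kernel is built from the functions $w_k(r)\propto r^{k+v}K_v(2nr)$ (up to normalizing constants involving $\Gamma$-factors). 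This is precisely the structure that earlier work (\cite{JQ}, \cite{AB}) exploited, and Lemma \ref{kp} on $K_v$ supplies the needed asymptotics.

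Next I would reduce the two statements to estimates on a single ``last particle''. For the right tail, $\mathbb{P}\big((n/(n+v))^{1/2}\max_i r_i \ge x\big)$ is, up to polynomial factors and the negligible probability of two or more large eigenvalues, comparable to $n$ times the probability that one designated $r_j$ exceeds the threshold; using the determinantal representation this equals (again up to subexponential corrections) a ratio of partition functions, hence $\tfrac1n\log\mathbb{P}\to$ the difference between a constrained and an unconstrained equilibrium energy. The key input is the Laplace asymptotics of $\int_{c}^{\infty} r^{k+v}K_v(2nr)\,dr$ as $n,v,k\to\infty$ with $v/n\to\alpha$ and $k/n\to t\in[0,1]$: writing $K_v(2nr)\asymp e^{-n\phi_\alpha(r)}$ via the uniform (Debye-type) expansion of the Bessel function, the exponent of $r^{k+v}K_v(2nr)$ is $n(t+\alpha)\log r - n\phi_\alpha(r)+\cdots$, and maximizing over $r$ produces the critical point $\kappa_\alpha(x)$ — note that solving the stationarity equation $r^2 = \tfrac{(t+\alpha)(\text{something})}{\cdots}$ at $t=1$ gives exactly the quadratic $\kappa$ that defines $\kappa_\alpha(x)=2(1+\alpha)x^2/(\alpha+\sqrt{\alpha^2+4(1+\alpha)x^2})$. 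Assembling the constrained maximum minus the free-energy normalization, collecting the $\log\Gamma$ terms via Stirling, yields $I^{(r)}_\alpha(x)=\alpha\log\frac{1+\alpha}{\alpha+\kappa_\alpha(x)}+2(\kappa_\alpha(x)-\log x-1)$.

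For the left tail the speed jumps to $n^2$, because forcing $\max_i r_i\le x$ with $x<1$ compresses \emph{all} $n$ eigenvalues, so the cost is a full equilibrium-measure deformation rather than a single-particle event. Here I would use the energy functional
\begin{equation*}
\mathcal{E}(\mu)=-\iint \log|s-t|\,d\mu(s)\,d\mu(t)+\int V_\alpha(t)\,d\mu(t),
\end{equation*}
with external field $V_\alpha$ coming from the $\log r^{1+v}K_v(2nr)$ term in the $r$-variables, minimize over probability measures supported in $(0,x]$, and subtract the unconstrained minimum; the constrained minimizer is the ``balayage'' of the equilibrium law onto $[0,x]$, and evaluating $\mathcal{E}$ at it produces the stated $I^{(l)}_\alpha(x)$ with the same $\kappa_\alpha(x)$ playing the role of the pushed-in right edge. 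The two boundary cases $\alpha=0$ and $\alpha=+\infty$ follow by continuity (for $\alpha=0$, $\kappa_0(x)=x$ and one recovers the Ginibre-type rates; for $\alpha\to\infty$ the correctly rescaled limits collapse to the Marchenko–Pastur edge rates), which I would verify by a direct limit computation.

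\medskip

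The main obstacle, I expect, is the uniform Bessel asymptotics step: controlling $K_v(2nr)$ simultaneously in the regimes $r\downarrow 0$, $r=O(1)$, and $r\to\infty$, with $v\to\infty$ proportionally to $n$, so that the Laplace method can be applied with error terms that are genuinely $o(n)$ (for the right tail) and $o(n^2)$ (for the left tail) \emph{uniformly} over the range of summation indices $k\le n$. Near the hard edge $r=0$ the modified Bessel function transitions from its $r^{-v}$ singular behavior to exponential decay, and one must show this transition region contributes negligibly; this is the technical heart of the argument and is where Lemma \ref{kp} must be pushed to its sharpest form. Everything else — the determinantal reduction, the single-particle-dominates-the-maximum estimate, the Stirling bookkeeping, and the convexity/uniqueness of the constrained equilibrium problem — is routine once these asymptotics are in hand.
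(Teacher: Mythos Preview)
Your approach is viable in principle but takes a substantially harder road than the paper's. The key simplification you are missing is Lemma~\ref{s} (the Kostlan-type identity, borrowed from \cite{JQ} and \cite{Kostlan}): for any symmetric function, $(|\zeta_1|^2,\dots,|\zeta_n|^2)$ is exchangeable with a sequence $(Y_1,\dots,Y_n)$ of \emph{independent} random variables, $Y_j$ having density proportional to $y^{2j+v-1}K_v(2ny)$. This kills the determinantal interaction entirely. With $X_j=(n/(n+v))^{1/2}Y_j$, the right tail reduces to a single-term estimate because $\mathbb{P}(X_j\ge x)$ is monotone in $j$, so $\mathbb{P}(X_n\ge x)\le\mathbb{P}(X_{(n)}\ge x)\le n\,\mathbb{P}(X_n\ge x)$; and the left tail becomes the trivial product $\mathbb{P}(X_{(n)}\le x)=\prod_{j}\mathbb{P}(X_j\le x)$, i.e.\ a sum of $n$ one-dimensional Laplace estimates rather than a constrained equilibrium problem. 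The paper then splits according to whether $v$ is bounded (use $K_v(y)\sim\sqrt{\pi/(2y)}\,e^{-y}$) or $v\to\infty$ (use the uniform Debye expansion of $K_v(vy)$), and in each regime the individual $\log\mathbb{P}(X_j\gtrless a)$ is computed to $\tilde O(\log n)$ precision by elementary monotonicity bounds on integrals of the form $\int e^{-v\tau_j(t)}\,dt$ (Lemmas~\ref{estioninte} and~\ref{keyl}); summing over $j$ with Lemma~\ref{ma} gives $I^{(l)}_\alpha$ directly.

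Your Coulomb-gas route for the left tail would require identifying the constrained equilibrium measure explicitly and verifying that the resulting energy difference equals $I^{(l)}_\alpha$; this is doable but is precisely the calculation the independence structure lets you bypass. Likewise, your ``ratio of partition functions'' framing for the right tail is correct but unnecessary once you know $\max_j X_j$ is governed by the single variable $X_n$. The Bessel-asymptotics difficulty you flag is real and is indeed the technical core of the paper's argument too, but because the paper only ever needs the one-dimensional tail of each $X_j$ separately, the required uniformity is much weaker than what a full empirical-measure LDP would demand.
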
 

 \begin{thm}\label{extremindp} 
  Let $(\zeta_1, \cdots, \zeta_n), \alpha$ and $\kappa_{\alpha}(x)$ be the same as in Theorem \ref{extremaxdp} above. Then 
  $$\lim_{n\to\infty}\frac{1}{n^2}\log\mathbb{P}\left(\left(\frac{n}{n+v}\right)^{1/2}\min_{1\le i\le n}|\zeta_i|^2\ge x\right)=-\widehat{J}_{\alpha}(x)$$ for any $x>0,$ where $\widehat{J}_{\alpha}(x)$ is defined as follows when $\alpha\in (0, +\infty)$
 $$
 \widehat{J}_{\alpha}(x)=\begin{cases} \alpha\left(\frac{\alpha+2}{2}\log\left(1+\frac1{\alpha}\right)-\log\left(1+\frac{\kappa_{\alpha}(x)}{\alpha}\right)\right) +2\kappa_{\alpha}(x)-\frac{3+\alpha}2-\log x, & x\ge 1;\\
\frac{\alpha^2}{2}\log\left(1+\frac{\kappa_{\alpha}}{\alpha}\right)-\frac{1}{2}(\alpha\kappa_{\alpha}(x)-\kappa_{\alpha}^2(x)), & 0< x<1.
 \end{cases}
  $$
  When $\alpha=0$ or $\alpha=+\infty,$ we take the corresponding limit.

 \end{thm}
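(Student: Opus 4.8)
The plan is to treat $\min_{1\le i\le n}|\zeta_i|^2$ in parallel with $\max_{1\le i\le n}|\zeta_i|^2$, exploiting the fact that, as in the study of the Ginibre-type ensembles, the density \eqref{density} should admit a representation in which the moduli $|\zeta_j|^2$ behave like independent random variables up to the reordering induced by the Vandermonde factor $\prod_{j<k}|z_j^2-z_k^2|^2$. First I would substitute $r_j=|\zeta_j|^2$ and integrate out the angular variables; the Bessel factor $K_v(2n r_j)$ together with $r_j^{v+1}$ and the Vandermonde determinant in the $r_j^2$ should, via a Schur/orthogonal-polynomial or Andr\'eief-type identity, reduce the law of the unordered set $\{r_1,\dots,r_n\}$ to that of $n$ independent variables $\{Y_1,\dots,Y_n\}$ with an explicit density proportional to $y^{?}K_v(2ny)$ on $(0,\infty)$ — precisely the structure already used implicitly to obtain \eqref{sjq}. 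Granting this, $\mathbb{P}(\sqrt{n/(n+v)}\,\min_i r_i\ge x)=\prod_{j=1}^n\mathbb{P}(Y_j\ge \sqrt{(n+v)/n}\,x)$ and $\mathbb{P}(\sqrt{n/(n+v)}\,\max_i r_i\le x)$ has the analogous product form, so both quantities become products of tail probabilities of the $Y_j$.

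The heart of the matter is then a sharp Laplace/saddle-point estimate. Using the asymptotics of $K_v$ from Lemma \ref{kp} (the large-argument exponential decay together with the uniform asymptotics when $v/n\to\alpha$), I would write each $\mathbb{P}(Y_j\ge c x)$ or $\mathbb{P}(Y_j\le c x)$ as an integral whose integrand is $\exp(-n\,\varphi_{\alpha, x}(t) + o(n))$ for an explicit rate function $\varphi$, and extract the exponential order by Laplace's method. Summing $\log$ of these $n$ tails produces a Riemann sum that converges, after dividing by $n^2$, to $-\int_0^1 (\cdots)\,ds$ — equivalently to the closed-form rate $\widehat J_\alpha(x)$ once the integral over the "index fraction" $s=j/n$ is carried out. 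The case split at $x=1$ reflects whether the constraint $\min r_i\ge x$ is active for \emph{all} indices (when $x\ge 1$, since the bulk of the $r_j$ accumulates near the soft edge at $1$) or only for the lower fraction of them (when $x<1$); the auxiliary function $\kappa_\alpha(x)$ is exactly the saddle point, i.e. the solution of the critical equation $\kappa^2+\alpha\kappa=(1+\alpha)x^2$, which one recognizes in the stated formula for $\kappa_\alpha(x)$.

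For the lower bound one uses that $\mathbb{P}(\min_i r_i\ge x)\ge \prod_j \mathbb{P}(Y_j\ge cx)$ together with a matching Laplace lower bound on each factor; for the upper bound the same product identity is exact, so no union-bound loss occurs — this is the payoff of the independence structure and is why the answer is clean. The main obstacle I anticipate is \emph{uniformity}: the Laplace estimates must hold uniformly in the index $j$ ranging over $1,\dots,n$ (equivalently in $s=j/n\in(0,1]$) and, in the regimes $\alpha=0$ and $\alpha=+\infty$, uniformly as $v/n\to\alpha$; near $s\to 0$ and near the transition $x\to 1$ the rate function degenerates and one must control the integrand carefully to be sure the Riemann sum converges to the claimed integral rather than picking up a boundary contribution. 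Handling the two boundary values $\alpha\in\{0,+\infty\}$ by a separate direct computation (taking limits in $\widehat J_\alpha$ and checking the $K_v$ asymptotics simplify accordingly) should then complete the proof.
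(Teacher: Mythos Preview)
Your proposal is correct and follows essentially the same route as the paper: the independence reduction you anticipate is exactly Lemma~\ref{s} (with $Y_j$ having density proportional to $y^{2j+v-1}K_v(2ny)$), the product $\mathbb{P}(X_{(1)}\ge x)=\prod_j\mathbb{P}(X_j\ge x)$ is used verbatim, and the Laplace-type estimates on each factor (Lemmas~\ref{asymlem1} and~\ref{asymlem2}, split according to whether $v$ is bounded or $v\to\infty$) together with the summation Lemma~\ref{ma} play the role of your Riemann-sum step, with the transition index $j_1$ or $j_4$ satisfying $j_4/n\to\kappa_\alpha(x)$ exactly as you describe. One small slip: the product identity is an equality in both directions, so your ``$\ge$'' for the lower bound should be ``$=$''; the two-sided Laplace bounds on each $\log\mathbb{P}(X_j\ge x)$ are what give matching upper and lower exponential rates.
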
 

Having these two theorems at hand, we could offer a strong law of large numbers  for $(n /(n+v))^{1 / 4}\max_{1\le j\le n}|\zeta_j|,$ as well as for $(n /(n+v))^{1 / 4}\min_{1\le j\le n}|\zeta_j|.$    
\begin{rmk} Theorems \ref{extremaxdp} and the Borel Cantelli lemma lead that  $$\left(\frac{n}{n+v}\right)^{1/2}\max_{1\le j\le n}|\zeta_j|^2\stackrel{a. s. }{\longrightarrow} 1 $$ and, as a parallel result, Theorem \ref{extremindp} brings us the following strong law of large number $$\left(\frac{n}{n+v}\right)^{1/2}\min_{1\le j\le n}|\zeta_j|^2\stackrel{a. s. }{\longrightarrow} 0.$$
\end{rmk}

As we can see, the central limit \eqref{sjq0} also implies that $$\sqrt{s_n} \left(\left(\frac{n}{n+v}\right)^{1/4}\max_{1\le j\le n}|\zeta_j|-1-\frac{a(s_n)}{4\sqrt{s_n}}\right)$$ converges to $0$ in probability. This inspires us to investigate the following moderate deviation probabilities, which give more details on the behavior of $(n /(n+v))^{1 / 4}\max_{1\le j\le n}|\zeta_j|$ around $1$ and $(n /(n+v))^{1 / 4}\min_{1\le j\le n}|\zeta_j|$ around $0,$ respectively. 
 
\subsection{Moderate deviation probabilities for $\max_{1\le j\le n}|\zeta_j|^2$ and $\min_{1\le j\le n}|\zeta_j|^2$} 
For simplicity, we use the notation $x_n\ll y_n$ to replace the limit $\lim\limits_{n\to\infty}\frac{x_n}{y_n}=0.$   
We first state the moderate deviation of $\max_{1\le i\le n}|\zeta_i|^2.$
\begin{thm}\label{extremaxmdp}
	Let $(\zeta_1, \cdots, \zeta_n)$ and $\alpha$ be the same as in Theorem \ref{extremaxdp} above. Given $x>0.$ We have   
	$$\lim_{n\to\infty}\frac{1}{n \, l_n^2}\log \mathbb{P}\left(\left(\frac{n}{n+v}\right)^{1/2}\max_{1\le i\le n}|\zeta_i|^2\ge 1+ l_n \; x\right)=-\frac{2(1+\alpha)}{2+\alpha}x^2$$ for any positive sequence $(l_n)$ satisfying $\log n/n<\!< l_n^2<\!<1$ 
	and
	$$\lim_{n\to\infty}\frac{1}{n^2 \, l_n^3}\log \mathbb{P}\left(\left(\frac{n}{n+v}\right)^{1/2}\max_{1\le i\le n}|\zeta_i|^2\le 1-l_n \; x\right)=-\frac{4(1+\alpha)^2}{3(2+\alpha)^2}x^3$$
	once $\log n/n<\!< l_n^3<\!<1.$   
\end{thm}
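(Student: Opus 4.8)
The strategy is to derive both moderate deviation estimates from the exact probabilistic representation of the extremal eigenvalues. As in the analysis leading to \eqref{sjq}, the key input is that the modulus-squared variables $|\zeta_1|^2, \dots, |\zeta_n|^2$, after the scaling $(n/(n+v))^{1/2}$, can be written (in distribution) as an unordered collection whose ordered statistics are governed by a determinantal structure; concretely, there are independent random variables — gamma-type variables coming from the $K_v$ weight and the Vandermonde $\prod |z_j^2 - z_k^2|^2$ — such that $\max_i |\zeta_i|^2$ has the same law as $\max$ of $n$ independent variables $Y_{n,1}, \dots, Y_{n,n}$, and similarly for the min. This is precisely the mechanism already used to pass from \eqref{sjq0} to \eqref{sjq} and in Theorems \ref{extremaxdp}–\ref{extremindp}; I would first record the asymptotics of $\mathbb{P}(Y_{n,k} \ge 1 + l_n x)$ and $\mathbb{P}(Y_{n,k} \le 1 - l_n x)$ uniformly in the relevant range of $k$, using the sharp estimates for the incomplete gamma function / $K_v$ recalled in Lemma \ref{kp}.

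For the right tail, I would write
\[
\mathbb{P}\left(\left(\tfrac{n}{n+v}\right)^{1/2}\max_{1\le i\le n}|\zeta_i|^2 \ge 1 + l_n x\right)
= 1 - \prod_{k} \mathbb{P}\big(Y_{n,k} < 1 + l_n x\big)
\asymp \sum_{k} \mathbb{P}\big(Y_{n,k} \ge 1 + l_n x\big),
\]
the last step being justified because in the moderate regime each summand is exponentially small, so the product is $1 - (1+o(1))\sum_k$. Each term $\mathbb{P}(Y_{n,k} \ge 1 + l_n x)$ is an upper-incomplete-gamma tail; a Laplace/saddle-point expansion of the rate function $I_\alpha^{(r)}$ from Theorem \ref{extremaxdp} around $x=1$ gives $I_\alpha^{(r)}(1 + l_n x) = \frac{2(1+\alpha)}{2+\alpha} l_n^2 x^2 + o(l_n^2)$, using $I_\alpha^{(r)}(1) = 0$, $(I_\alpha^{(r)})'(1) = 0$ and $(I_\alpha^{(r)})''(1) = \frac{4(1+\alpha)}{2+\alpha}$ (a direct computation from the closed form of $\kappa_\alpha$). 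The dominant contribution to $\sum_k$ comes from the largest index, and the condition $l_n^2 \gg \log n/n$ is exactly what guarantees that the sub-exponential prefactors and the number $n$ of terms are absorbed into the $o(n l_n^2)$ error, so $\frac{1}{n l_n^2}\log \mathbb{P} \to -\frac{2(1+\alpha)}{2+\alpha} x^2$.

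For the left tail one has $\mathbb{P}(\max \le 1 - l_n x) = \prod_k \mathbb{P}(Y_{n,k} \le 1 - l_n x)$, so $\frac{1}{n^2}\log$ of this equals $\frac{1}{n^2}\sum_k \log \mathbb{P}(Y_{n,k} \le 1 - l_n x)$, which is a Riemann-sum that converges to an integral of the left-tail rate function; Taylor-expanding $I_\alpha^{(l)}$ near $x=1$ and integrating the resulting cubic profile produces the constant $\frac{4(1+\alpha)^2}{3(2+\alpha)^2}$ and the power $l_n^3$. The scaling $l_n^3$ (rather than $l_n^2$) reflects that the left deviation is an $n^2$-speed event whose rate vanishes to third order after the spatial integration. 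I would streamline this by combining the expansion of $I_\alpha^{(l)}$ near $1$ with the substitution that turns $\frac{1}{n}\sum_k$ into $\int_0^1$, the condition $l_n^3 \gg \log n/n$ again ensuring the polynomial corrections are negligible. The main obstacle is the uniformity: one must control $\mathbb{P}(Y_{n,k} \ge 1 + l_n x)$ and the log of $\mathbb{P}(Y_{n,k} \le 1 - l_n x)$ simultaneously over all $k$ and over the whole window $\log n/n \ll l_n^2 \ll 1$, with error terms that are $o(n l_n^2)$ (resp. $o(n^2 l_n^3)$) — this requires the full strength of the non-asymptotic incomplete-gamma bounds and a careful treatment of the transition between the bulk indices and the edge index $k \approx n$. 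The two boundary cases $\alpha = 0$ and $\alpha = +\infty$ are handled by taking limits in the rate functions, exactly as in Theorems \ref{extremaxdp} and \ref{extremindp}.
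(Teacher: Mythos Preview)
Your plan is correct and follows essentially the same route as the paper: reduce via Lemma~\ref{s} to the independent variables $X_j$, use monotonicity to replace $\mathbb{P}(X_{(n)}\ge 1+l_n x)$ by $\mathbb{P}(X_n\ge 1+l_n x)$ up to a harmless factor $n$, use the product formula for $\mathbb{P}(X_{(n)}\le 1-l_n x)$, and then feed $a_{n}=1\pm l_n x$ into the single--variable estimates (Lemmas~\ref{asymlem1}, \ref{lelower1} for $v$ bounded, Lemmas~\ref{asymlem2}, \ref{lelowerinf} for $v\to\infty$) whose $\tilde O(\log n)$ errors are $o(n l_n^2)$ resp.\ $o(n^2 l_n^3)$ precisely under the assumed range on $l_n$. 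Your framing of the answer as the Taylor expansion of $I_\alpha^{(r)}$ and $I_\alpha^{(l)}$ at $1$ (with $(I_\alpha^{(r)})''(1)=\tfrac{4(1+\alpha)}{2+\alpha}$ and $I_\alpha^{(l)}$ vanishing to third order there) is a helpful heuristic check that the paper does not state explicitly, but it is not itself a proof: the LDP in Theorem~\ref{extremaxdp} only controls the log--probability up to $o(1)$, so one must rerun the estimates with the sharper error, which is exactly what you and the paper both do. One small point of phrasing: in your left--tail paragraph, $I_\alpha^{(l)}$ is already the limit of $\tfrac{1}{n^2}\sum_j\log\mathbb{P}(X_j\le\,\cdot\,)$, so there is no further ``integration of the cubic profile'' --- the cubic comes directly from the third--order vanishing of $I_\alpha^{(l)}$ at $1$ (equivalently, from summing $O(n l_n)$ nontrivial terms each of size $O(n l_n^2)$).
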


Now, we present similar result for $\min_{1\le i\le n}|\zeta_i|^2$
\begin{thm}\label{extreminmdp} Let $(\zeta_1, \cdots, \zeta_n)$ and $\alpha$ be the same as in Theorem \ref{extremaxdp} above. Given arbitrary $x>0.$ 
\begin{enumerate} 
\item Assume that $\lim\limits_{n\to\infty}\frac{v}{\sqrt{n\log n}}=\beta\in [0, +\infty).$ Given any positive sequence $(l_{n})$ verifying $\frac{\log n}{n}<\!< l_n^2<\!<1.$ We have that 
	$$\lim_{n\to\infty}\frac{1}{n^2 \, l_{n}^2}\log \mathbb{P}\left(\left(\frac{n}{n+v}\right)^{1/2}\min_{1\le i\le n}|\zeta_i|^2\ge l_{n}\; x\right)=-\frac{x^2}{2}.$$
	\item Suppose that $\sqrt{n\log n}<\!<v<\!<n.$ It holds that 
	$$\aligned &\lim_{n\to\infty}\frac{1}{v^2}\log \mathbb{P}\left(\left(\frac{n}{n+v}\right)^{1/2}\min_{1\le i\le n}|\zeta_i|^2\ge \frac{v}{n}x \right)=-\frac{1}{2}\log\left(\frac{1+\sqrt{1+4x^2}}{2}+1+x^2-\sqrt{1+4x^2}\right); \\
	&\lim_{n\to\infty}\frac{1}{n^2 l_{n, v}^2}\log \mathbb{P}\left(\left(\frac{n}{n+v}\right)^{1/2}\min_{1\le i\le n}|\zeta_i|^2\ge l_{n, v} x\right)=-\frac{x^2}{2}, \quad v/n<\!<l_{n, v}<\!<1.
	\endaligned  $$ 
	\item Assume now $\alpha\in (0, +\infty].$  It goes 
$$\lim_{n\to\infty}\frac{1}{n^2 l_n^4}\log\mathbb{P}\left(\left(\frac{n}{n+v}\right)^{1/2} \min_{1\le i\le n}|\zeta_i|^2\ge l_{n} \, x\right)=-\frac{\left(1+\alpha\right)^2}{4\alpha^2} x^4$$ for any sequence $(l_n)$ satisfying $$\left(\frac{\log n}{n}\right)^{1/4} <\!<l_{n}<\!< 1.$$
\end{enumerate}
\end{thm}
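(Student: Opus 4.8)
\textbf{Proof proposal for Theorem~\ref{extreminmdp}.}
The plan is to reduce everything to a sum of one-dimensional tail probabilities and then evaluate that sum by Laplace's method. As in the proof of Theorem~\ref{extremindp}, the change of variables $w_j=\zeta_j^2$ combined with Kostlan's observation turns \eqref{density} into the density of a radially symmetric planar ensemble with weight $\propto|w|^{v}K_v(2n|w|)$; consequently the unordered family $\{|\zeta_j|^2\}_{1\le j\le n}$ has the same law as $\{T_j\}_{1\le j\le n}$, where the $T_j$ are \emph{independent} and $T_j$ has density proportional to $r^{2j-1+v}K_v(2nr)$ on $(0,\infty)$. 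Writing $c_n=(n/(n+v))^{1/2}$, independence gives the exact identity
$$\log\mathbb{P}\Big(c_n\min_{1\le j\le n}|\zeta_j|^2\ge t\Big)=\sum_{j=1}^n\log\mathbb{P}\big(T_j\ge t/c_n\big)$$
for each of the four threshold sequences $t$ in the statement, so everything is reduced to precise one-variable tail asymptotics summed over $j$.

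Fix an index and set $\theta:=t/c_n$. If $\theta$ lies above the mode $r_j^{\star}\asymp n^{-1}\sqrt{j(v+j)}$ of $T_j$ (the mode location being read off from Lemma~\ref{kp}), then Laplace's method applied to $\int_{\theta}^{\infty}r^{2j-1+v}K_v(2nr)\,dr$ at its left endpoint and to the normalising constant around $r_j^{\star}$ yields
$$\log\mathbb{P}(T_j\ge\theta)=g_j(\theta)-g_j(r_j^{\star})+O(\log n),\qquad g_j(r):=(2j-1+v)\log r+\log K_v(2nr),$$
while for $\theta<r_j^{\star}$ the same analysis gives $\mathbb{P}(T_j\ge\theta)=1-e^{g_j(\theta)-g_j(r_j^{\star})+O(\log n)}$. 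Which asymptotic of $K_v$ from Lemma~\ref{kp} enters depends on the size of $2n\theta$ relative to the order $v$: the large-argument form $K_v(z)\sim(\pi/2z)^{1/2}e^{-z}$ governs part~(1) and the second limit of part~(2) (there $2n\theta\gg v$), and is replaced by the uniform Debye expansion $K_v(vz)\sim(\pi/2v)^{1/2}(1+z^2)^{-1/4}e^{-v\eta(z)}$, $\eta(z)=\sqrt{1+z^2}+\log\frac{z}{1+\sqrt{1+z^2}}$, in part~(3) and in the first limit of part~(2), where $2n\theta$ is much smaller than, resp.\ comparable to, the order $v$. In every case only the indices $j\le j_0$, where $j_0$ is the largest index with $r_j^{\star}<\theta$, are driven into their upper tails and contribute at leading order; here $j_0\asymp nt$ in part~(1) and the second limit of part~(2), $j_0\asymp v$ in the first limit of part~(2), and $j_0\asymp n\,l_n^{2}$ in part~(3). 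The indices $j>j_0$, and the $O(v)$ smallest indices (where in parts~(1) and the second limit of~(2) the large-argument form is inaccurate), contribute only at lower order, and showing this is where the hypotheses are used: $\log n/n\ll l_n^{2}$ in part~(1) (together with $\lim v/\sqrt{n\log n}<\infty$), $\sqrt{n\log n}\ll v\ll n$ and $v/n\ll l_{n,v}$ in part~(2), and $\log n/n\ll l_n^{4}$ in part~(3).

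It remains to evaluate $\sum_{j\le j_0}\big(g_j(r_j^{\star})-g_j(\theta)\big)$ as a Riemann sum. In the three polynomial regimes, the substitution $j=j_0 s$ reduces it to the elementary integral $\int_0^1(1-s+s\log s)\,ds=\tfrac14$, which produces the constants $\tfrac{x^2}{2}$ (part~(1), using $c_n\to1$; and the second limit of part~(2), using in addition $v\ll n\,l_{n,v}$) and, after inserting $c_n^{2}\to(1+\alpha)^{-1}$ and $v/n\to\alpha$, the constant $\tfrac{(1+\alpha)^2}{4\alpha^2}x^4$ (part~(3)); the value at $\alpha=\infty$ is obtained by passing to the limit. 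For the first limit of part~(2), the mode relation $\sqrt{1+z^2}=1+\tfrac{2j-1}{v}$ turns the same sum into an $x$-dependent saddle-point integral over $z\in(0,2x)$ whose value is $\tfrac12\log\!\big(\tfrac{1+\sqrt{1+4x^2}}{2}+1+x^2-\sqrt{1+4x^2}\big)$.

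The main obstacle I anticipate is the uniformity of the one-variable estimates over $1\le j\le n$. Three regions need separate care: for the smallest indices the mode $r_j^{\star}$ sits near the origin, where $K_v(2nr)$ is in its singular regime $r^{-v}$ (or $\log(1/r)$ when $v=0$) and, in parts~(1) and~(2), the large-argument Laplace approximation degrades, so these terms must be bounded by hand and shown negligible; for $j$ in a window of width $O(\sqrt{j_0})$ around the cutoff the tail of $T_j$ is neither an upper nor a lower tail and only the crude bound $0\le-\log\mathbb{P}(T_j\ge\theta)=O(1)$ is available, so one must check that $\sqrt{j_0}$ is of smaller order than the leading term; and in the first limit of part~(2) the Bessel argument $2n\theta$ is of the same order as its index, so one sits exactly at the junction of the two asymptotic regimes of Lemma~\ref{kp}, and the Debye expansion must be handled carefully there---this is precisely what makes the corresponding rate function non-polynomial.
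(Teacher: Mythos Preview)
Your reduction via Kostlan and the Laplace analysis of each $\mathbb{P}(T_j\ge\theta)$ is exactly the paper's strategy, and your Riemann-sum evaluation with $\int_0^1(1-s+s\log s)\,ds=\tfrac14$ is correct and gives the right constants in parts (1), (2b) and (3). The paper organizes the computation differently, and the difference is worth noting because it dissolves precisely the uniformity obstacle you flag at the end.

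Rather than switching between the large-argument form and the Debye form according to whether $2n\theta\gg v$, the paper splits only on whether $v$ is bounded or $v\to\infty$. For $v\to\infty$ it uses the Debye expansion $K_v(vy)\sim\sqrt{\pi/(2v)}\,(1+y^2)^{-1/4}e^{-v\eta(y)}$ \emph{uniformly in $y\in(0,\infty)$} (Lemma~\ref{kp}(3)), so no separate treatment of small indices or of the junction regime is needed. Summing the resulting one-variable asymptotics via the exact identities of Lemma~\ref{ma} produces a single closed form
\[
\sum_{j\le j_0}\log\mathbb{P}(X_j\ge l_n x)\;=\;-\frac{v^2}{2}\log\Bigl(1+\frac{j_0}{v}\Bigr)-\frac{j_0^2}{2}+\frac{j_0 v}{2}+o(\cdot),
\]
valid simultaneously in all of parts (1)--(3); each limit then drops out by inserting the relevant asymptotic of $j_0$ relative to $v$ (namely $j_0\gg v$, $j_0\asymp v$, $j_0\ll v$). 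Your Riemann-sum route recovers the same answers but at the cost of doing three separate endpoint computations and of having to argue, in parts (1) and (2b), that the large-argument form of $K_v$ is accurate even though $v\to\infty$ there; the true error in that expansion is $O(v^2/z)$, not $O(1/z)$, so this step would need care. The paper's uniform-Debye route sidesteps that issue entirely and also makes the non-polynomial rate in part (2a) fall out of the same master expression rather than from a bespoke integral.
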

\begin{rmk}
The deviation probabilities are a specific case of the hole probability, which refers to the probability that a prescribed region is free of eigenvalues. This concept was initially explored in the context of the complex Ginibre ensemble in \cite{Grobe}. The generalized form, known as the Mittag-Leffler ensemble, has been the subject of numerous recent studies, as documented in recent paper \cite{Charlier03, Charlier22}.
\end{rmk}

The remainder of this paper will be organized as follows : in the following section, we give some necessary lemmas and the third section is devoted to the proofs related to the case that $v$ is bounded. In this last section, we give the proofs corresponding to the case that $v\to\infty.$  

\section{Preliminaries} 
 In this section, we collect some key lemmas.   
 
 The first crucial one is borrowed from \cite{JQ}, originally coming from \cite{Kostlan}, which transfers the corresponding probabilities of 
 $(n /(n+v))^{1/2}\min_{1\le i\le n}|\zeta_i|^2$ and $(n /(n+v))^{1/2}\max_{1\le i\le n}|\zeta_i|^2$ to those related to independently random variables taking values in real line. It says that 
 \begin{lem}\label{s}
	 Let $\zeta_1, \cdots, \zeta_n$ be random variables with a joint density function given by \eqref{density}. Let $(Y_j)_{1\le j\le n}$ be independent random variables and the density function of $Y_j$ is proportional to $y^{2j+v-1} K_v(2n y) 1_{y>0}.$ Then for any symmetric function $g,$ $g(|\zeta_1^2|, \cdots, |\zeta_n^2|)$ and $g(Y_1, \cdots, Y_n)$ have the same distribution. 
\end{lem}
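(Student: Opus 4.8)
The plan is to deduce the lemma from Kostlan's theorem on rotation-invariant determinantal ($\beta=2$) ensembles, after a preliminary change of variables $w_j=\zeta_j^2$. The point of squaring is that the constraint $\mathrm{Re}(\zeta_j)>0$ confines $\arg\zeta_j$ to the half-period $(-\pi/2,\pi/2)$, so the density \eqref{density} is \emph{not} invariant under rotating an individual $\zeta_j$; passing to $w_j=\zeta_j^2$ enlarges the angular range to the full period $(-\pi,\pi)$, which is exactly the rotation invariance that Kostlan's computation exploits.

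First I would carry out the substitution. The map $z\mapsto z^2$ is a bijection from the open right half-plane onto $\mathbb{C}\setminus(-\infty,0]$, a set of full planar Lebesgue measure, with real Jacobian $|\,d(\sqrt w)/dw\,|^2=1/(4|w|)$. Using $|z_j^2-z_k^2|=|w_j-w_k|$, $|z_j|^{2(v+1)}=|w_j|^{v+1}$ and $K_v(2n|z_j|^2)=K_v(2n|w_j|)$, the density \eqref{density} pushes forward to the density on $\mathbb{C}^n$ proportional to
$$\prod_{1\le j<k\le n}|w_j-w_k|^2\cdot\prod_{j=1}^n|w_j|^{v}K_v(2n|w_j|).$$
Since $|\zeta_j|^2=|w_j|$, it suffices to identify the joint law of the unordered family $\{|w_1|,\dots,|w_n|\}$.

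Next I would run the Kostlan computation on this rotation-invariant ensemble. Writing $w_j=r_je^{i\theta_j}$ (so planar Lebesgue measure is $\prod_j r_j\,dr_j\,d\theta_j$) and expanding $\prod_{j<k}(w_j-w_k)=\det\big(w_j^{k-1}\big)_{j,k=1}^n=\sum_{\sigma\in S_n}\mathrm{sgn}(\sigma)\prod_j w_j^{\sigma(j)-1}$, one gets
$$\prod_{j<k}|w_j-w_k|^2=\sum_{\sigma,\tau\in S_n}\mathrm{sgn}(\sigma)\,\mathrm{sgn}(\tau)\prod_{j=1}^n w_j^{\sigma(j)-1}\,\overline{w_j}^{\,\tau(j)-1}.$$
Integrating each $\theta_j$ over $[0,2\pi)$ and using that $\int_0^{2\pi}e^{i(\sigma(j)-\tau(j))\theta_j}\,d\theta_j$ equals $2\pi$ when $\sigma(j)=\tau(j)$ and $0$ otherwise, every term with $\sigma\neq\tau$ drops out, and the marginal density of $(r_1,\dots,r_n)=(|w_1|,\dots,|w_n|)$ on $(0,\infty)^n$ comes out proportional to
$$\sum_{\sigma\in S_n}\,\prod_{j=1}^n r_j^{2(\sigma(j)-1)}\cdot r_j\cdot r_j^{v}K_v(2nr_j)=\mathrm{per}\big(r_j^{2k+v-1}K_v(2nr_j)\big)_{j,k=1}^n.$$

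Finally I would read off the conclusion. This permanent equals $\sum_{\sigma\in S_n}\prod_{j=1}^n p_{\sigma(j)}(r_j)$ where $p_k(r)$ is proportional to $r^{2k+v-1}K_v(2nr)1_{r>0}$; up to the factor $1/n!$ it is the symmetrization of the product density $\prod_k p_k(r_k)$, which is the joint density of the \emph{independent} vector $(Y_1,\dots,Y_n)$ of the statement (with $Y_k$ of density $p_k$). Hence the order statistics of $(|\zeta_1|^2,\dots,|\zeta_n|^2)=(|w_1|,\dots,|w_n|)$ and of $(Y_1,\dots,Y_n)$ have the same joint law, so $g(|\zeta_1|^2,\dots,|\zeta_n|^2)\stackrel{d}{=}g(Y_1,\dots,Y_n)$ for every symmetric measurable $g$. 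The only technical points needing care are the harmless null-set bookkeeping in the substitution and the finiteness of $\int_0^\infty r^{2k+v-1}K_v(2nr)\,dr$ for each $k\ge 1$ (so that the $p_k$ normalize to genuine densities and the permanent identity is legitimate); this follows from the behaviour of $K_v$ recorded in Lemma \ref{kp} — its power (or, for $v=0$, logarithmic) singularity at the origin is killed by the factor $r^{2k+v-1}$, and $K_v$ decays exponentially at infinity. I expect this integrability check, rather than the algebra of the Kostlan step, to be the only place demanding genuine (if routine) attention.
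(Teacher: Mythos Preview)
Your argument is correct: the change of variables $w_j=\zeta_j^2$ turns the ensemble into a rotation-invariant $\beta=2$ point process, the angular integration kills the off-diagonal terms in the double Vandermonde expansion, and the resulting permanent is precisely the symmetrization of the product $\prod_k p_k$, which yields the claim for symmetric $g$. The paper does not actually prove this lemma; it simply imports it from \cite{JQ} (where the present model is treated) and ultimately from Kostlan \cite{Kostlan}, so your write-up supplies exactly the Kostlan computation the paper is pointing to.
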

 
The modified Bessel function of the second kind $K_v$ plays crucial role in the density function of $(Y_j)_{1\le j\le n}.$ We first introduce some properties of $K_v$ in the following lemma  
(see \cite{AS} and \cite{Olver}). 
\begin{lem}\label{kp} The following statements hold:
\begin{itemize} 
\item[(1).] (Formula 10.32.8 in \cite{Olver}) Given $v\ge 0.$ It holds that 
$$K_v(x)=\frac{\sqrt{\pi} x^v}{2^v \Gamma(v+1/2)}\int_1^{\infty} e^{-x t}(t^2-1)^{v-1/2} dt$$ for any $x> 0.$
\item[(2).] (Formula 10.40.2 in \cite{Olver} ) Suppose that $v\ge 0$ is fixed. Then, $$K_{v}(x)=\sqrt{\frac{\pi}{2 x}} e^{-x}\left[1+O\left(\frac{1}{x}\right)\right] \quad \text { as } x \rightarrow \infty.$$
\item[(3).] (Formula 10.41.4 in \cite{Olver}) As $v\to\infty,$ we have that 
$$K_{v}(v x)=\sqrt{\frac{\pi}{2 v}} x^{-v}\left(1+x^{2}\right)^{-1/4} e^{-v (\sqrt{1+x^2}-\log(1+\sqrt{1+x^2})}\left[1+O\left(\frac{1}{v}\right)\right]$$ 
uniformly on $x\in (0, +\infty).$ 
\end{itemize} 	
\end{lem}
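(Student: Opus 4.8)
All three statements are classical identities for the modified Bessel function quoted from \cite{Olver}; a self-contained derivation runs as follows. For part (1), the plan is to recognise the right-hand side as (a constant times) the unique solution of the modified Bessel equation that decays at $+\infty$, and then to pin down the constant. Set $F(x)=x^{v}\int_{1}^{\infty}e^{-xt}(t^{2}-1)^{v-1/2}\,dt$; since $v\ge 0$ the integral converges for all $x>0$, and differentiation under the integral sign is legitimate on each half-line $[x_{0},\infty)$ by dominated convergence. Integrating $\frac{d}{dt}\bigl[e^{-xt}(t^{2}-1)^{v+1/2}\bigr]=-xe^{-xt}(t^{2}-1)^{v+1/2}+(2v+1)t\,e^{-xt}(t^{2}-1)^{v-1/2}$ over $t\in(1,\infty)$ (the primitive vanishing at both endpoints because $v+\tfrac12>0$), one gets $x\,I''(x)+(2v+1)I'(x)-x\,I(x)=0$ for $I(x):=\int_{1}^{\infty}e^{-xt}(t^{2}-1)^{v-1/2}\,dt$; substituting $I=x^{-v}F$ converts this into $x^{2}F''+xF'-(x^{2}+v^{2})F=0$. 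Because $F(x)=O(x^{v}e^{-x})\to 0$, $F$ is a scalar multiple of $K_{v}$, and the scalar $\sqrt{\pi}/(2^{v}\Gamma(v+\tfrac12))$ is recovered by comparing the $x\to0^{+}$ asymptotics of $F$ and $K_{v}$ (via Legendre's duplication formula) — or one may simply invoke 10.32.8 of \cite{Olver}.

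For part (2) the plan is Watson's lemma applied to the representation in (1) with $v$ fixed. The substitution $t=1+s/x$ gives
\[
\int_{1}^{\infty}e^{-xt}(t^{2}-1)^{v-1/2}\,dt=\frac{e^{-x}}{x^{v+1/2}}\int_{0}^{\infty}e^{-s}s^{v-1/2}\Bigl(2+\tfrac{s}{x}\Bigr)^{v-1/2}\,ds,
\]
and since $\bigl(2+\tfrac{s}{x}\bigr)^{v-1/2}=2^{v-1/2}\bigl(1+O(s/x)\bigr)$, the last integral equals $2^{v-1/2}\Gamma(v+\tfrac12)\bigl(1+O(1/x)\bigr)$ as $x\to\infty$. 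Multiplying by the prefactor $\sqrt{\pi}\,x^{v}/\bigl(2^{v}\Gamma(v+\tfrac12)\bigr)$ leaves precisely $\sqrt{\pi/(2x)}\,e^{-x}\bigl(1+O(1/x)\bigr)$.

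For part (3) the cleanest route is Laplace's method applied to the symmetric representation $K_{v}(x)=\tfrac12\int_{-\infty}^{\infty}e^{vt-x\cosh t}\,dt$ — obtained from $K_{v}(x)=\int_{0}^{\infty}e^{-x\cosh t}\cosh(vt)\,dt$ by writing $\cosh(vt)=\tfrac12(e^{vt}+e^{-vt})$ and reflecting one half of the range, or equivalently from (1) after $t=\cosh s$ at the price of a $\Gamma(v+\tfrac12)$-prefactor to be absorbed via Stirling's formula. Replacing $x$ by $vx$, we have $K_{v}(vx)=\tfrac12\int_{-\infty}^{\infty}e^{-v\psi_{x}(t)}\,dt$ with $\psi_{x}(t)=x\cosh t-t$, which has a unique minimiser $t_{0}=t_{0}(x)$ determined by $\sinh t_{0}=1/x$; at this point $\psi_{x}(t_{0})=\sqrt{1+x^{2}}-\log\bigl(1+\sqrt{1+x^{2}}\bigr)+\log x$ and $\psi_{x}''(t_{0})=\sqrt{1+x^{2}}$. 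Laplace's method then yields
\[
K_{v}(vx)=\tfrac12\,e^{-v\psi_{x}(t_{0})}\sqrt{\frac{2\pi}{v\,\psi_{x}''(t_{0})}}\,\bigl(1+O(1/v)\bigr),
\]
and inserting these two values together with $e^{-v\log x}=x^{-v}$ reproduces the stated formula with constant $\sqrt{\pi/(2v)}$ and factor $(1+x^{2})^{-1/4}$.

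The one genuinely delicate point, which I expect to be the main obstacle, is the \emph{uniformity in} $x\in(0,\infty)$ asserted in (3): as $x\to\infty$ the saddle $t_{0}(x)$ collapses to $0$ and $\psi_{x}''(t_{0})\to\infty$, whereas as $x\to0^{+}$ the saddle escapes to $+\infty$ and $\psi_{x}(t_{0})\to-\infty$, so a single application of the textbook Laplace estimate does not visibly give an $x$-free $O(1/v)$. This is handled by the standard uniform form of Laplace's method as in \cite[Ch.~3]{Olver}: rescale via $t=t_{0}(x)+\psi_{x}''(t_{0})^{-1/2}w$, observe that the resulting exponent $v\bigl(\psi_{x}(t)-\psi_{x}(t_{0})\bigr)=\tfrac12 vw^{2}+(\text{higher order})$ has $w$-derivatives bounded uniformly in $x$ on the relevant scale (for instance $\psi_{x}'''(t_{0})=1$ for every $x$), dominate the remainder by one $x$-independent integrable function, and cross-check the two extreme regimes against the known small/large-argument and small/large-order asymptotics of $K_{v}$. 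Parts (1) and (2) present no such difficulty, $v$ being held fixed there.
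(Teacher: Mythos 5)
The paper offers no proof of this lemma at all: all three formulas are simply quoted from the NIST Handbook (DLMF 10.32.8, 10.40.2, 10.41.4), so your self-contained derivation is a genuinely different route, and a sound one. Your verification of (1) is correct — the integration by parts does yield $xI''+(2v+1)I'-xI=0$, the substitution $I=x^{-v}F$ does give the modified Bessel equation, and the decay of $F$ plus the $x\to0^{+}$ comparison (via Legendre duplication) pins down the constant exactly as claimed. Part (2) is standard Watson's lemma and your computation of the constant $\sqrt{\pi/(2x)}$ checks out; the only cosmetic blemish is the inline estimate $\bigl(2+\tfrac{s}{x}\bigr)^{v-1/2}=2^{v-1/2}\bigl(1+O(s/x)\bigr)$, which is not uniform in $s$ when $v>3/2$, but the usual Watson's-lemma bookkeeping (splitting off the tail $s>x$, which contributes exponentially small error) repairs this with no change to the conclusion. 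Part (3) is also set up correctly: with $\psi_{x}(t)=x\cosh t-t$ one indeed has $\sinh t_{0}=1/x$, $\psi_{x}(t_{0})=\sqrt{1+x^{2}}-\log\bigl(1+\sqrt{1+x^{2}}\bigr)+\log x$ and $\psi_{x}''(t_{0})=\sqrt{1+x^{2}}$, and the pointwise Laplace estimate reproduces the stated formula. The one place where your argument is a sketch rather than a proof is the uniformity in $x\in(0,\infty)$ of the $O(1/v)$ error, which you yourself identify as the delicate point; making the rescaled-remainder domination rigorous (in particular controlling the $\cosh$-growth in the tails uniformly as $t_{0}(x)\to\infty$ for small $x$) essentially amounts to redoing Olver's uniform asymptotics, which is precisely what the paper's citation to DLMF 10.41.4 is meant to bypass. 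So the trade-off is clear: the paper's approach costs nothing and inherits the uniform error bound from the reference, while yours makes the lemma self-contained and transparent for parts (1)–(2), at the price of either invoking or reconstructing the uniform Laplace machinery for part (3).
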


For the deviation probabilities of $(n /(n+v))^{1/2}\min_{1\le i\le n}|\zeta^2_i|$ and $(n /(n+v))^{1/2} \max_{1\le i\le n}|\zeta^2_i|,$ starting from the observation of Lemma \ref{kp}, we need treat the following two kinds integrals  \\
$$\int_{a}^{b} y^{2j+v-1} K_v(y) dy, \quad \int_{a}^b y^{2j+v-1} K_v(vy) dy$$ for $v$ bounded and  
 $v$ sufficiently large, respectively. Thus, we offer some estimates on these two integrals and the normalizing constant 
 \begin{equation}\label{zjconstant} Z_j:=\int_{0}^{\infty} y^{2j+v-1} K_v(y) dy.\end{equation}

\begin{lem}\label{constant}
Let $Z_j$ be defined as in \eqref{zjconstant}. Then, for any $j\ge 1$ and $v\ge 0,$ we have that 
\begin{equation}\label{zjnew} Z_j=\frac{\sqrt{\pi}\,\Gamma(2j+2v)\,\Gamma(j)}{2^{v+1}\Gamma(j+v+1/2)}.\end{equation}	
Moreover,   
$$\log Z_j=2j\log (2j)-2j+O(\log j)$$ for $j$ large enough and $v$ bounded.  
\end{lem}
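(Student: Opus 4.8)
The plan is to evaluate the integral $Z_j = \int_0^\infty y^{2j+v-1} K_v(y)\,dy$ in closed form using the integral representation of $K_v$ from Lemma \ref{kp}(1), and then to extract the asymptotics by applying Stirling's formula to the resulting ratio of Gamma functions. First I would substitute the formula $K_v(y) = \frac{\sqrt{\pi}\,y^v}{2^v\Gamma(v+1/2)}\int_1^\infty e^{-yt}(t^2-1)^{v-1/2}\,dt$ into the definition of $Z_j$, yielding a double integral $Z_j = \frac{\sqrt{\pi}}{2^v\Gamma(v+1/2)}\int_0^\infty\!\!\int_1^\infty y^{2j+2v-1} e^{-yt}(t^2-1)^{v-1/2}\,dt\,dy$. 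By Tonelli's theorem the order of integration may be swapped since the integrand is nonnegative. Performing the inner integral in $y$ first gives $\int_0^\infty y^{2j+2v-1}e^{-yt}\,dy = \Gamma(2j+2v)\,t^{-(2j+2v)}$, so that
$$
Z_j = \frac{\sqrt{\pi}\,\Gamma(2j+2v)}{2^v\Gamma(v+1/2)}\int_1^\infty t^{-(2j+2v)}(t^2-1)^{v-1/2}\,dt.
$$
The remaining $t$-integral is a Beta integral: with the substitution $t = 1/\sqrt{u}$ (so $t^2-1 = (1-u)/u$ and $dt = -\tfrac12 u^{-3/2}\,du$), it becomes $\tfrac12\int_0^1 u^{j-1}(1-u)^{v-1/2}\,du = \tfrac12 B(j, v+1/2) = \tfrac{\Gamma(j)\Gamma(v+1/2)}{2\,\Gamma(j+v+1/2)}$. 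Substituting back, the factors $\Gamma(v+1/2)$ cancel and one obtains exactly \eqref{zjnew}.

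For the asymptotic statement, I would take logarithms in \eqref{zjnew} and apply Stirling's formula $\log\Gamma(z) = z\log z - z - \tfrac12\log z + \tfrac12\log(2\pi) + O(1/z)$ to each of $\Gamma(2j+2v)$, $\Gamma(j)$ and $\Gamma(j+v+1/2)$. With $v$ bounded, collecting the leading terms gives $\log Z_j = (2j+2v)\log(2j+2v) - (2j+2v) + j\log j - j - (j+v+1/2)\log(j+v+1/2) + (j+v+1/2) + O(\log j)$; the linear-in-$j$ terms combine to $-2j + O(1)$, and expanding $\log(2j+2v) = \log(2j) + O(1/j)$ and $\log(j+v+1/2) = \log j + O(1/j)$ shows the $j\log(\cdot)$ contributions collapse to $(2j)\log(2j) + j\log j - j\log j + O(\log j) = 2j\log(2j) + O(\log j)$, which is the claimed expansion $\log Z_j = 2j\log(2j) - 2j + O(\log j)$.

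The only genuinely delicate point is bookkeeping in the asymptotic step: one must be careful that the $O(\log j)$ error term genuinely absorbs all the lower-order pieces uniformly in the bounded parameter $v$ — in particular the terms $2v\log(2j+2v)$ and $(v+1/2)\log(j+v+1/2)$, which are $O(\log j)$ precisely because $v$ is bounded, and the constant-order contributions from $v$ and from the $\tfrac12\log(2\pi)$ and $-\tfrac12\log z$ Stirling corrections. The closed-form evaluation itself is routine once the Fubini swap and the Beta-integral substitution are in place; I do not anticipate any obstacle there beyond verifying the elementary change of variables.
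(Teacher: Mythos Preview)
Your argument is correct. The direct evaluation of $Z_j$ via the integral representation of $K_v$ from Lemma~\ref{kp}(1), Tonelli, the Gamma integral in $y$, and the Beta substitution $t=u^{-1/2}$ all check out and yield exactly \eqref{zjnew}. The Stirling bookkeeping for the asymptotic is also fine; as you note, the boundedness of $v$ is precisely what makes the residual terms $2v\log(2j+2v)$ and $(v+\tfrac12)\log(j+v+\tfrac12)$ sit inside $O(\log j)$.

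The paper takes a different route for the closed form: it simply cites the identity \eqref{zjnew} from the literature (planar orthogonality norms, \cite{Akemann}) without computing it, and then invokes Stirling for the asymptotic just as you do. Your derivation is therefore more self-contained --- it stays entirely within the tools already introduced in Lemma~\ref{kp} and elementary special-function identities --- whereas the paper's version is shorter but relies on an external reference. For the asymptotic half of the lemma the two approaches coincide.
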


The expression \eqref{zjnew} follows from the planar orthogonality and its norm, see e.g. \cite{Akemann} and the asymptotic is due to the Stirling formula $$\Gamma(z)=z^{z-1/2} e^{-z} \sqrt{2\pi}(1+o(1))$$ for $z$ large enough implies the asymptotic.  
  
Seeing the behavior $K_v(y)=C e^{-y}(1+O(y^{-1}))$ as $y\to\infty,$ we first offer lower and upper bounds on the corresponding integral of $e^{-y} y^{b}$ for $b>0.$  
\begin{lem}\label{estioninte} Given any $b>0$ fixed.  
\begin{enumerate}
\item For any $a\ge b+1,$ 
$$a^b e^{-a}\le \int_a^{\infty} y^b e^{-y} dy\le a^{b+1} e^{-a}. $$
\item For any $a<b+1,$ 
$$b^b e^{-(b+1)}\le \int_a^{\infty} y^b e^{-y} dy\le 2(b+1)b^{b} e^{-b}. $$
\item For any $a>b,$ 
$$\frac{1}{b+1} b^{b+1} e^{-b}\le \int_0^a y^b e^{-y} dy\le a \;b^{b} e^{-b}. $$
\item For any $a<b-1,$ 
 $$\frac{1}{b+1}{a}^{b+1} e^{-a}\le \int_0^a y^b e^{-y} dy\le a^{b+1} e^{-a}. $$
\end{enumerate}   
\end{lem}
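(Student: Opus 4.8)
\emph{Sketch.} The plan is to base everything on the elementary shape of the integrand $\phi(y):=y^{b}e^{-y}$. Since $\phi'(y)=y^{b-1}e^{-y}(b-y)$, the function $\phi$ increases on $(0,b)$, decreases on $(b,\infty)$, and attains its maximum $\phi(b)=b^{b}e^{-b}$ at $y=b$. Two devices will be used throughout: comparing $\phi$ with a constant on a well-chosen subinterval, and comparing $\phi$ with the derivative $\tfrac{d}{dy}\bigl(y^{b+1}e^{-y}\bigr)=(b+1-y)\,y^{b}e^{-y}$, whose antiderivative is explicit.

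For item (1), where $a\ge b+1$: the lower bound is $\int_a^\infty\phi\ge a^{b}\int_a^\infty e^{-y}\,dy=a^{b}e^{-a}$, using $y^{b}\ge a^{b}$ on $[a,\infty)$. For the upper bound I would integrate by parts,
\[
\int_a^\infty y^{b}e^{-y}\,dy=a^{b}e^{-a}+b\int_a^\infty y^{b-1}e^{-y}\,dy,
\]
bound $y^{b-1}\le y^{b}/a$ on $[a,\infty)$, and solve the resulting inequality $(1-b/a)\int_a^\infty\phi\le a^{b}e^{-a}$ for $\int_a^\infty\phi$, finishing with $a-b\ge1$. For item (3), where $a>b$: the upper bound is $\int_0^a\phi\le a\,\phi(b)=a\,b^{b}e^{-b}$; the lower bound comes from restricting to $[0,b]\subseteq[0,a]$ and using $e^{-y}\ge e^{-b}$ there, giving $\int_0^a\phi\ge e^{-b}\int_0^{b}y^{b}\,dy=\frac{b^{b+1}}{b+1}e^{-b}$.

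For item (4), where $a<b-1$: on $[0,a]$ one has $1\le b+1-y\le b+1$, hence $y^{b}e^{-y}\le\tfrac{d}{dy}\bigl(y^{b+1}e^{-y}\bigr)\le(b+1)\,y^{b}e^{-y}$; integrating over $[0,a]$ (the boundary term at $0$ vanishes) yields simultaneously $\int_0^a\phi\le a^{b+1}e^{-a}$ and $a^{b+1}e^{-a}\le(b+1)\int_0^a\phi$, which are the two desired bounds. Finally, item (2) reduces to the previous ones: for $a<b+1$ split $\int_a^\infty\phi=\int_a^{b+1}\phi+\int_{b+1}^\infty\phi$; discard the first piece and apply the lower bound of (1) at $b+1$ for the lower estimate, and for the upper estimate bound $\int_a^{b+1}\phi\le\int_0^{b+1}\phi$ by (3) and $\int_{b+1}^\infty\phi$ by the upper bound of (1), absorbing the leftover exponential factor via $(1+1/b)^{b}\le e$.

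I do not expect a genuine obstacle here: all the estimates are one-line comparisons, and the only things requiring a little care are keeping straight in each case whether $a$ is being compared with $b$ or with $b\pm1$, and the constant-chasing in item (2), where the inequality $(1+1/b)^{b}<e$ is exactly what is needed for the two pieces to combine into the stated factor $2(b+1)b^{b}e^{-b}$.
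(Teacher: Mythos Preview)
Your proof is correct and runs parallel to the paper's. The paper packages everything through the convex function $w(y)=y-b\log y$ and the device $\int e^{-w}=\int e^{-w}\,w'/w'$, which is your integration-by-parts step in disguise; for item~(2) it bounds $\int_a^{b+1}\phi$ directly by $(b{+}1{-}a)\,\phi(b)$ rather than routing through item~(3), and it omits the proof of item~(4) altogether. Your antiderivative observation $\tfrac{d}{dy}\bigl(y^{b+1}e^{-y}\bigr)=(b{+}1{-}y)\,\phi(y)$ for item~(4) is a clean way to obtain both inequalities at once and is arguably tidier than what the $w'$-method would give there, since the latter degenerates at the left endpoint for the lower bound.
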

\begin{proof} Set a function $w$ as 
$$w(y)=y-b\log y.$$ As we can see, $w$ is a strictly convex function, whose unique minimizer is nothing but $b$ and $w'$ is increasing on $[b, +\infty)$ while is decreasing on $(0, b].$  
Suppose now that $a>b.$ Therefore, $w'$ is positive and increasing on $[a, +\infty),$ which implies  that
$$\aligned \int_a^{+\infty} e^{-w(y)}dy=\int_a^{+\infty} e^{-w(y)} \frac{w'(y)}{w'(y)} dy\le \frac{1}{w'(a)} \int_{a}^{\infty}d(-e^{-w(y)})=\frac{a^{b+1} e^{-a}}{a-b}. 
\endaligned $$
Here we use the fact that  $w'(a)=1-\frac{b}{a}.$
A similar reason allows us to get the following lower bound  
$$\aligned \int_a^{\infty} e^{-w(y)}dy\ge \frac{1}{w'(\infty)}e^{-w(a)}=a^b e^{-a}. 
\endaligned $$
For the second inequality, we 
see 
$$\aligned \int_a^{+\infty} e^{-w(y)}dy&=\int_a^{b+1} e^{-w(y)} dy+\int_{b+1}^{+\infty} e^{-w(y)} dy\\
&\le (b+1-a) e^{-w(b)}+(b+1) e^{-w(b+1)}\\
&\le 2(b+1) e^{-w(b)}=2(b+1)b^b e^{-b}. 
\endaligned $$ 
The last inequality is true since $b$ is the minimizer of $w.$  
It is easy to see that 
$$\int_a^{+\infty} y^b e^{-y} dy\ge \int_{b+1}^{\infty} y^b e^{-y}dy\ge b^b e^{-(b+1)}.$$ 
For $a>b,$ the upper bound in the third item is natural since $y^b e^{-y}\le b^b e^{-b}.$ The lower bound shows up because 
$$\int_0^a y^b e^{-y} dy\ge \int_0^b y^{b} e^{-y}dy\ge e^{-b}\int_0^b y^{b} dy=\frac{1}{b+1} b^{b+1} e^{-b}.$$ The proof of the last item is similar, and is omitted here. 
\end{proof} 
 Next, we are going to work on the integral $\int_a^b y^{2j+v-1} K_{v}(vy) dy$ for $v$ large enough. Guided by the third term of Lemma \ref{kp}, for simplicity of the notations, we introduce the following two functions 
$$ u(x):=\sqrt{1+x^2}-\log(1+\sqrt{1+x^2}),$$ and  
$$\tau_j(x)=u(x)+\frac{1}{4v}\log(1+x^2)-\frac{2j-1}{v}\log x 
$$ for $x>0$ and $1\le j\le n.$
Utilize the third item of Lemma \ref{kp} to write   $$ y^{2j+v-1} K_v(vy)=\sqrt{\frac{\pi}{2v}} e^{-v\tau_j(y)}(1+O(v^{-1}))$$ uniformly on $y\in (0, +\infty).$  
Some simple calculus brings the derivative and second derivative of $\tau_j$ as 
$$\tau_j'(y)=\frac{y}{1+\sqrt{1+y^2}}+\frac{y}{2v(1+y^2)}-\frac{2j-1}{v \, y}$$ and 
$$\tau_j''(y)=\frac{1}{\sqrt{1+y^2}(1+\sqrt{1+y^2})}+\frac{1-y^2}{2 v(1+y^2)^2}+\frac{2j-1}{v y^2}.$$ 
We see clearly that $\tau_j'(0)=-\infty$ and $\tau_j(+\infty)=1.$   
Also, it is true that $\tau_j''>0,$ since 
$$\frac{1-y^2}{2 v(1+y^2)^2}+\frac{2j-1}{v y^2}=\frac{y^2(1-y^2)+(2j-1)(1+y^2)^2}{2v y^2(1+y^2)^2}\ge\frac{3y^2+1}{2v y^2(1+y^2)^2}.$$ 
Therefore, there exists a unique solution to the equation $\tau_j'=0,$ which is obviously the unique minimizer of $\tau_j$ and is denoted by $x_j.$ Next, we give a lemma on $x_j.$  
\begin{lem}\label{keyl} Let $\tau_j$ and $x_j$ be defined as above.  
\begin{enumerate}  
\item For any $1\le j\le n,$ 
$$2\sqrt{\left(j-\frac34\right)\left(j+v-\frac34\right)}< v \, x_j< 2\sqrt{\left(j-\frac12\right)\left(j+v-\frac12\right)}.$$ 
\item For any $a>x_j,$ it holds that
$$\frac{1}{v \tau_j'(M)}e^{-v\tau_j(a)}\left(1-e^{v (\tau_j(a)-\tau_j(M))}\right)\le \int_a^{\infty} e^{-v \tau_j(t)}dt\le \frac1{v  \tau_j'(a)}e^{-v \tau_j(a)} $$
for any $M>a.$ 
\item For any $a<x_j,$ we know that 
$$\aligned \int_a^{\infty} e^{-v \,\tau_j(t)}dt&\le 4 j e^{-v\,\tau_j(x_j)},\\
\int_a^{\infty} e^{-v \,\tau_j(t)}dt&\ge \frac{1}{v\,\tau_j'(M)} e^{-v\,\tau_j(x_j)}\left(1-e^{-v(\tau_j(M)-\tau_j(x_j))}\right)\endaligned $$
for any $M>x_j.$ 
\item Given $a<x_j.$ One gets that  
$$\aligned \frac{1}{-v\tau_j'(a_1)}e^{-v\tau_j(a)}\left(1-e^{v\tau_j(a)-v\tau_j(a_1)}\right)\le\int_0^a e^{-v \,\tau_j(t)}dt&\le \frac{1}{-v\tau_j'(a)}e^{-v\,\tau_j(a)} \\
\endaligned $$
for any $a_1\in (0, a).$ 
\item It holds that 
$$\frac{1}{-v\tau_j'(M)}e^{-v\tau_j(x_j)}\left(1-e^{v\tau_j(x_j)-v\tau_j(M)}\right)\le\int_0^a e^{-v \,\tau_j(t)}dt\le a e^{-v\,\tau_j(x_j)}\\
$$
for any $a>x_j>M>0.$
\end{enumerate} 
\end{lem}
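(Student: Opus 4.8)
The plan is to prove the five bounds in Lemma \ref{keyl} by exploiting the convexity of $\tau_j$ together with the elementary integration-by-parts device already used in the proof of Lemma \ref{estioninte}: on any interval on which $\tau_j'$ has a constant sign, write $e^{-v\tau_j(t)} = \frac{1}{v\tau_j'(t)}\cdot v\tau_j'(t) e^{-v\tau_j(t)} = -\frac{1}{v\tau_j'(t)}\,\frac{d}{dt}e^{-v\tau_j(t)}$, then bound $1/\tau_j'(t)$ by its value at the appropriate endpoint, using monotonicity of $\tau_j'$ (which is increasing since $\tau_j''>0$). This immediately yields items (2)--(5) once item (1) is in hand, because (1) locates the minimizer $x_j$ and hence tells us where $\tau_j'$ changes sign; the factor $4j$ in item (3) will come from crudely estimating the length of the interval $[a,x_j]$ (bounded by $x_j$, and $vx_j < 2\sqrt{(j-1/2)(j+v-1/2)} \le 4j$ when, say, $v \le $ something, or more carefully via the bound $x_j \le 2j/\sqrt{?}$; in any case a generous constant suffices since these are all log-scale estimates).

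The heart of the matter is item (1), the two-sided bound on $vx_j$. Since $x_j$ is characterized by $\tau_j'(x_j)=0$, i.e.
\[
\frac{x_j}{1+\sqrt{1+x_j^2}} + \frac{x_j}{2v(1+x_j^2)} = \frac{2j-1}{v\,x_j},
\]
I would multiply through by $v x_j$ and rearrange. Writing $r = v x_j$ and using the identity $\frac{x^2}{1+\sqrt{1+x^2}} = \sqrt{1+x^2}-1$, the equation becomes $v(\sqrt{1+x_j^2}-1) + \frac{x_j^2}{2(1+x_j^2)} = 2j-1$, i.e. $v\sqrt{1+x_j^2} = 2j-1 + v - \frac{x_j^2}{2(1+x_j^2)}$. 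The term $\frac{x_j^2}{2(1+x_j^2)}$ lies in $[0,1/2)$, so $v\sqrt{1+x_j^2}$ lies between $2j-3/2+v$ and $2j-1+v$; squaring and using $v^2 + v^2 x_j^2 = (v\sqrt{1+x_j^2})^2$ gives $v^2 x_j^2$ between $(2j-3/2+v)^2 - v^2 = (2j-3/2)(2j-3/2+2v)$ and $(2j-1+v)^2 - v^2 = (2j-1)(2j-1+2v)$. Hence $r = vx_j$ lies between $\sqrt{(2j-3/2)(2j-3/2+2v)}$ and $\sqrt{(2j-1)(2j-1+2v)}$; rewriting $(2j-3/2)(2j-3/2+2v) = 4(j-3/4)(j-3/4+v)$ and $(2j-1)(2j-1+2v) = 4(j-1/2)(j-1/2+v)$ gives exactly the claimed inequalities $2\sqrt{(j-3/4)(j+v-3/4)} < vx_j < 2\sqrt{(j-1/2)(j+v-1/2)}$, with the strict inequalities because $\frac{x_j^2}{2(1+x_j^2)}$ is strictly between $0$ and $1/2$ for $0 < x_j < \infty$.

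For items (2)--(5), once the sign of $\tau_j'$ is known on the relevant ranges, each estimate is a short computation. In (2) and (4), on $(x_j,\infty)$ we have $\tau_j'>0$ increasing, so $\int_a^\infty e^{-v\tau_j} \le \frac{1}{v\tau_j'(a)}e^{-v\tau_j(a)}$ from the integration-by-parts bound, while the lower bound comes from restricting to $[a,M]$ and bounding $1/\tau_j'$ below by $1/\tau_j'(M)$; on $(0,x_j)$ (item (4)) $\tau_j'<0$ increasing, and the symmetric argument with $-\tau_j'$ applies. In (3) and (5), for an endpoint on the wrong side of $x_j$ one splits the integral at $x_j$: on the short piece between $a$ and $x_j$ (respectively $M$ and $x_j$) use the trivial bound $e^{-v\tau_j(t)} \le e^{-v\tau_j(x_j)}$ times the interval length; on the remaining piece reuse items (2) or (4). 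The only mildly delicate point is making the lower bounds in (3) and (5) clean — there one keeps the $M$-dependence explicitly as stated. I expect item (1) to be the main obstacle only in the sense of needing the right algebraic manipulation (the cancellation $v^2(1+x_j^2) - v^2 = v^2x_j^2$ is what makes the bound come out in closed form); everything after that is routine calculus of the type already displayed in Lemma \ref{estioninte}.
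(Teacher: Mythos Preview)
Your approach is essentially the paper's, and your argument for item (1) is actually cleaner than the paper's. The paper proves (1) by plugging the test points $y_j(t)=2\sqrt{(j-t)(j+v-t)}/v$ for $t=1/2,\,3/4$ into $\tau_j'$ and checking the sign of the resulting numerator $(2-4t)v^2+(3-4t)(2j-2t)(2j-2t+2v)$. You instead solve $\tau_j'(x_j)=0$ for $v\sqrt{1+x_j^2}$ directly, getting $v\sqrt{1+x_j^2}=2j-1+v-\tfrac{x_j^2}{2(1+x_j^2)}$ with the correction in $(0,\tfrac12)$; squaring and subtracting $v^2$ gives the exact bounds. This is correct and more transparent.

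For items (2)--(5) your plan matches the paper's, but there is one genuine wrinkle in the upper bound of item (3) that your outline glosses over. You propose splitting at $x_j$ and ``reusing item (2)'' on $[x_j,\infty)$, but item (2)'s upper bound $\frac{1}{v\tau_j'(a)}e^{-v\tau_j(a)}$ blows up at $a=x_j$ since $\tau_j'(x_j)=0$. The paper avoids this by splitting not at $x_j$ but at the concrete point $2\sqrt{j(j+v)}/v$, which exceeds $x_j$ by item (1). There one can compute explicitly
\[
\tau_j'\!\left(\frac{2\sqrt{j(j+v)}}{v}\right)\ge \frac{1}{2\sqrt{j(j+v)}},
\]
so the tail from item (2) is bounded by $\frac{2\sqrt{j(j+v)}}{v}\,e^{-v\tau_j(x_j)}$, and adding the trivial bound on the piece $[a,\,2\sqrt{j(j+v)}/v]$ gives the total factor $\frac{4\sqrt{j(j+v)}}{v}$, which is at most $4j$ (for $v$ large, the regime where $\tau_j$ is used). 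Once you make this split-point explicit, your proof goes through. The lower bounds in (3) and (5), and all of (4), follow exactly by the symmetric argument you describe.
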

\begin{proof}  
Set $y_j(t)=2\sqrt{(j-t)(j+v-t)}/v,$ which satisfies the following equation 
$$1+y_j^2(t)=(2j-2t+v)^2/v^2.$$ 
Hence, 
$$y_j(t)\tau'_j(y_j(t))=\frac{(2-4t)v^2+(3-4t)(2j-2t)(2j-2t+2v)}{2v(2j-2t+v)^2}.$$  As a consequence, we know that the numerator is positive when $t=1/2,$ and is negative when $t=3/4$ instead. 
Since $\tau'_j(x_j)=0$ and 	$\tau'_j$ is strictly increasing, the second item is proved. 
By the property of $\tau_j,$ we know that both $\tau_j$ and $\tau_j'$ are increasing on 
$[x_j, +\infty).$ Hence, for $a>x_j,$
$$\aligned \int_a^{+\infty} e^{-v\tau_j(t)}dt&=\int_a^{+\infty} e^{-v\tau_j(t)} \frac{v  \tau_j'(t)}{v \tau_j'(t)}dt\\
 &\le \frac{1}{v\inf_{x\ge a} \tau_j'(t)} \int_a^{\infty} d\left(-e^{-v \;\tau_j(t)}\right) \\
 &=\frac{1}{v\, \tau_j'(a)} e^{-v \,\tau_j(a)}.
\endaligned $$ 
Also, for any $M>a,$ 
$$\aligned \int_a^{+\infty} e^{-v\tau_j(t)}dt&\ge\int_a^{M} e^{-v\tau_j(t)} \frac{v  \tau_j'(t)}{v \tau_j'(t)}dt\\
 &\ge\frac{1}{v\sup_{x\in(a, M)} \tau_j'(t)} \int_a^{M} d\left(-e^{-v \tau_j(t)}\right) \\
 &=\frac{1}{v\, \tau_j'(M)} e^{-v \,\tau_j(a)}\left(1-e^{-v\tau_j(M)+v\tau_j(a)}\right).
\endaligned $$
Meanwhile, similarly for $a<x_j,$ we have that
$$\aligned \int_a^{+\infty} e^{-v\tau_j(t)}dt&=\int_a^{2\sqrt{j(j+v)}/v} e^{-v\tau_j(t)}dt+\int_{2\sqrt{j(j+v)}/v}^{+\infty} e^{-v\tau_j(t)}dt\\
&\le e^{-v\tau_j(x_j)}\left(\frac{2\sqrt{j(j+v)}}{v}+\frac{1}{v\tau_j'(2\sqrt{j(j+v)}/v)}\right).
\endaligned $$  
Here, the last inequality holds since $x_j$ is the maximizer of $-\tau_j$ and $2\sqrt{j(j+v)}/v$ is an upper bound of $x_j$ by the second item. The definition of $\tau_j$ and some simple calculus  lead
$$\aligned \tau_j'(2\sqrt{j(j+v)}/v)&=\sqrt{j/v}+\sqrt{j(j+v)}/(v+2j)^2-(2j-1)/(2\sqrt{j(j+v)})\\
&\ge 1/(2\sqrt{j(j+v)}). 
\endaligned $$
Clearly, 
$$\frac{2\sqrt{j(j+v)}}{v}+\frac{1}{v\tau_j'(2\sqrt{j(j+v)}/v)}\le \frac{4\sqrt{j(j+v)}}{v}\le 4j.$$
The second inequality in the fourth item follows from a similar reason to the lower bound in the third item, and the proof is omitted here. The proof is then complete. 
\end{proof} 

At last, we state a lemma on a particular sum, which will be frequently used later.  
\begin{lem}\label{ma} 
For $n$ large enough, it holds that
$$\aligned 
&\sum_{i=1}^n i \log i=-\frac{n^2}{4}+\frac{n(n+1)}{2}\log n+\frac1{12}\log n+O(1); \\ 
&\sum_{i=1}^n \left(i+v-\frac 1 2\right)\log (i+v)=-\frac{n^2+2nv-2n}{4}+\frac{(n+v)^2}{2}\log(n+v)\\
&\quad \quad \quad \quad\quad \quad\quad \quad\quad \quad\quad \quad\;-\frac{v^2}{2}\log v-\frac16\log\left(1+ \frac n v\right) +O(1).
\endaligned $$ 	
\end{lem}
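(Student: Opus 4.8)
The plan is to obtain both asymptotics from the Euler–Maclaurin summation formula applied to $f(x)=x\log x$, supplemented by Stirling's formula for $\log N!$. (Equivalently, $\sum_{i=1}^n i\log i=\log\prod_{i=1}^n i^{\,i}$ is the logarithm of the hyperfactorial, whose expansion is the Glaisher–Kinkelin formula; but the Euler–Maclaurin route is self-contained and immediately yields uniform error control.)

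For the first identity, write $f(x)=x\log x$, so that $f'(x)=\log x+1$, $f''(x)=x^{-1}$, $f'''(x)=-x^{-2}$ and $f^{(4)}(x)=2x^{-3}$. Euler–Maclaurin carried through the $B_4$ term reads
$$\sum_{i=1}^n f(i)=\int_1^n x\log x\,dx+\frac{f(1)+f(n)}{2}+\frac{B_2}{2}\bigl(f'(n)-f'(1)\bigr)+\frac{B_4}{4!}\bigl(f'''(n)-f'''(1)\bigr)+R_n,$$
where $R_n$ is bounded in absolute value by a constant multiple of $\int_1^n|f^{(4)}(x)|\,dx$, which is at most $\int_1^\infty 2x^{-3}\,dx<\infty$, hence $R_n=O(1)$; the $B_4$ term is $O(1)$ as well. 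Since $\int_1^n x\log x\,dx=\tfrac{n^2}{2}\log n-\tfrac{n^2}{4}+\tfrac14$, $\tfrac{f(1)+f(n)}{2}=\tfrac n2\log n$, and $\tfrac{B_2}{2}(f'(n)-f'(1))=\tfrac1{12}\log n$, this gives
$$\sum_{i=1}^n i\log i=\frac{n^2}{2}\log n+\frac n2\log n+\frac1{12}\log n-\frac{n^2}{4}+O(1),$$
which is the first claim after using $\tfrac{n^2}{2}+\tfrac n2=\tfrac{n(n+1)}{2}$.

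For the second identity, substitute $j=i+v$ to rewrite the sum as $\sum_{j=v+1}^{v+n}\bigl(j-\tfrac12\bigr)\log j=S(n+v)-S(v)$, where
$$S(N):=\sum_{j=1}^N\Bigl(j-\tfrac12\Bigr)\log j=\sum_{j=1}^N j\log j-\tfrac12\log N!.$$
Inserting the first identity and Stirling's formula $\log N!=N\log N-N+\tfrac12\log N+O(1)$ (valid uniformly for $N\ge1$) yields
$$S(N)=\frac{N^2}{2}\log N-\frac{N^2}{4}+\frac N2-\frac16\log N+O(1),$$
the coefficient of $\log N$ being $\tfrac1{12}-\tfrac14=-\tfrac16$. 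Forming $S(n+v)-S(v)$ and simplifying $(n+v)^2-v^2=n^2+2nv$, $\tfrac{n+v}{2}-\tfrac v2=\tfrac n2$, and $\log(n+v)-\log v=\log\!\bigl(1+\tfrac nv\bigr)$ produces precisely the stated expression, using $-\tfrac{n^2+2nv}{4}+\tfrac n2=-\tfrac{n^2+2nv-2n}{4}$.

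The only point requiring attention is the uniformity of the $O(1)$ error. This is why one must push Euler–Maclaurin two steps (stopping at $B_2$ would leave a remainder of size $\int_1^n|f''|=\log n+O(1)$, not $O(1)$), and why Stirling's formula must be invoked in the form with a uniform $O(1)$ remainder for both $\log(n+v)!$ and $\log v!$; this is legitimate for every integer $v\ge1$, which is exactly the range in which the term $\log(1+n/v)$ is meaningful. Beyond that, the argument is routine manipulation of elementary antiderivatives and logarithms, and I foresee no real obstacle.
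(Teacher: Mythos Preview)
Your proof is correct. The structure matches the paper's: both derive the second identity by writing $\sum_{i=1}^n(i+v-\tfrac12)\log(i+v)=S(n+v)-S(v)$ with $S(N)=\sum_{j=1}^N(j-\tfrac12)\log j$, then combining the first identity with Stirling. The only difference is how the first identity is obtained: the paper imports the asymptotics $\sum_{i=1}^n i\log(i/n)=-n^2/4+\tfrac1{12}\log n+O(1)$ and $\sum_{i=1}^n\log(i/n)=-n+\tfrac12\log n+O(1)$ from \cite{MaLDP}, whereas you give a self-contained Euler--Maclaurin argument (equivalently, the Glaisher--Kinkelin expansion of the hyperfactorial). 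Your remark about pushing to the $B_4$ term so that the remainder is governed by $\int_1^\infty|f^{(4)}|<\infty$ is exactly the right point for uniformity, and your restriction to $v\ge1$ is appropriate since the second formula is only invoked in the paper when $v\to\infty$.
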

\begin{proof} It was proved in \cite{MaLDP} that 
$$\sum_{i=1}^n i\log \frac{i}{n}=-\frac{n^2}{4}+\frac{1}{12}\log n+O(1), \quad \sum_{i=1}^n \log\frac{i}{n}=-n+\frac12\log n+O(1).$$ 
The first item follows if we write $$\sum_{i=1}^n i \log i=\sum_{i=1}^n i \log \frac i n+\frac{n(n+1)}{2}\log n, \quad \sum_{i=1}^n \log i=\sum_{i=1}^n \log \frac i n+n\log n.$$ 
The second item is true, since  
$$\aligned &\quad \sum_{i=1}^n (i+v-1/2)\log (i+v)\\
&=\sum_{i=1}^{n+v} (i-1/2)\log i-\sum_{i=1}^v (i-1/2)\log i\\
&=-\frac{(n+v)^2-v^2}{4}+\frac{n}{2}+\frac{(n+v)^2}{2}\log(n+v)-\frac{v^2}{2}\log v-\frac16\log\left(1+\frac nv \right)+O(1).
\endaligned $$   	
\end{proof}

\section{When $v$ is boudned}  
In this section, we work on the general assumption that $v$ is bounded, under which we give some asymptotic of $\log \mathbb{P}(\sqrt{n/(n+v)} Y_j\ge a)$ and $\log \mathbb{P}(\sqrt{n/(n+v)} Y_j\le a)$ for $a>0$ based on Lemmas \ref{constant} and \ref{estioninte}, respectively.  
For simplicity, set 
\begin{align}\label{defx}
	X_j=\left(\frac{n}{n+v}\right)^{1/2} Y_j, \quad X_{(n)}:=\max_{1\le j\le n} X_j \quad \text{and} \quad X_{(1)}:=\min_{1\le j\le n} X_j
\end{align}
and $c_{n, v}=2\sqrt{n(n+v)}.$   Thus, 
 $$\log \mathbb{P}(X_j\ge a)=\log \mathbb{P}(2n Y_j\ge c_{n, v} \,a)$$ 
 and 
 $$\log \mathbb{P}(X_j\le a)=\log \mathbb{P}(2n Y_j\le c_{n, v} \,a).$$
  Having these facts at hand, we offer two key lemmas as follows.
 
 \begin{lem}\label{asymlem1} Consider the random variables $(Y_j)$ and given $a_{n}>0$ such that $c_{n, v} \,a_{n}\to\infty$ and $c_{n, v} a_{n}/n$ is bounded.   
 \begin{enumerate}
\item When $j$ satisfy $2j+v-1/2>c_{n, v} \, a_{n},$ we have the following asymptotic
\begin{equation}\label{asymb1}\log\mathbb{P}(2n Y_j\ge c_{n, v} a_{n})=\tilde{O}(\log n).\end{equation}
\item By contrast, for $j$ such that  $2j+v-1/2\le c_{n, v} \, a_{n},$	we have that
\begin{equation}\label{asymb2}\log\mathbb{P}(2n Y_j\ge c_{n, v} a_{n})=-2j\log \frac{j}{n a_{n}}+2j-2n a_{n}+\tilde{O}(\log n).\end{equation} 
 \end{enumerate}	
  Here, $s_n=\tilde{O}(\log n)$ means $\varlimsup\limits_{n\to\infty}\frac{s_n}{\log n}<\infty.$
 \end{lem}
\begin{proof} By definition, 
$$\mathbb{P}(2n Y_j\ge c_{n, v} a_{n})=Z_j^{-1}\int_{c_{n, v} a_{n}}^{+\infty} y^{2j+v-1} K_v(y) dy,$$ with the normalizing constant $Z_j=\int_0^{\infty} y^{2j+v-1}K_v(y) dy.$ 
Since $c_{n, v}\, a_{n}$ is large enough, Lemma \ref{kp} helps us to write 
$$\aligned \mathbb{P}(2n Y_j\ge c_{n, v} a_{n})&=\sqrt{\frac{\pi}{2}}Z_j^{-1}\int_{c_{n, v} a_{n}}^{+\infty} y^{2j+v-3/2} e^{-y}dy(1+o(1)).\\
\endaligned $$ 	
For $j$ such that $2j+v-1/2>c_{n, v} a_{n},$ Lemma \ref{estioninte} tells us that 
$$\aligned\log \int_{c_{n, v} a_{n}}^{+\infty} y^{2j+v-3/2} e^{-y}dy&\le (2j+v-3/2)\log(2j+v-3/2)-2j+O(\log (c_{n, v} a_{n}))\\
&=2j\log(2j)-2j+\tilde{O}(\log n).
\endaligned $$
Also, 
$$\aligned\log \int_{c_{n, v} a_{n}}^{+\infty} y^{2j+v-3/2} e^{-y}dy&\ge (2j+v-3/2)\log(2j+v-3/2)-(2j+v-3/2)\\
&=2j\log(2j)-2j+\tilde{O}(\log n).
\endaligned $$
Lemma \ref{constant} says 
$$\log Z_j=2j\log (2j)-2j+\tilde{O}(\log n).$$ 
Taking all these facts into account, we get 
$$\log \mathbb{P}\left( 2n Y_j\ge c_{n, v} \, a_{n}\right)=\tilde{O}(\log n).$$  
Meanwhile, for $j$ satisfying $2j+v-1/2\le c_{n, v} \, a_{n, v},$ 
we see 
$$\aligned\log \int_{c_{n, v} a_{n}}^{+\infty} y^{2j+v-3/2} e^{-y}dy&\le (2j+v-1/2)\log(c_{n, v}\, a_{n})-c_{n, v}\, a_{n}\\
&=2j\log(2n a_{n})-2n a_{n}+\tilde{O}(\log n)
\endaligned $$ 
and 
$$\aligned\log \int_{c_{n, v} a_{n}}^{+\infty} y^{2j+v-3/2} e^{-y}dy&\ge (2j+v-3/2)\log(c_{n, v}\, a_{n})-c_{n, v}\, a_{n}\\
&=2j\log(2n a_{n})-2n a_{n}+\tilde{O}(\log n).
\endaligned $$
Therefore, one gets that
$$\log \mathbb{P}\left( 2n Y_j\ge c_{n, v} \, a_{n}\right)=-2j\log\frac{j}{n a_{n}}+2 j-2n a_{n}+\tilde{O}(\log n).$$
\end{proof}

The integral $\int_0^{c_{n, v} a} $ is involved in the probability $\mathbb{P}(2n Y_j\le a).$ Unlike the integral $\int_{c_{n, v} a}^{\infty},$ we can not use directly the asymptotical expression of $K_v(y)$ since it is valid only for $y$ large enough. Hence, we give a lemma on the expression of $\mathbb{P}(2n Y_j\le a).$ 

\begin{lem}\label{leexpre} Let $(Y_j)$ be as above. We claim that 
$$\label{lefact}
\mathbb{P}(2n Y_j\le c_{n, v} a)={\Gamma(2j+v-1/2)}^{-1}\int_0^{c_{n, v} a} y^{2j+v-3/2} e^{-y} dy(1+o(1)).
$$
for $a>0$ and $j$ large enough.	
\end{lem}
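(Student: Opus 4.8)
The plan is to reduce the problem to the integral representation of $K_v$ from part (1) of Lemma \ref{kp}, and then separate the range of integration into a bounded ``bulk'' piece $[0, T]$ for a large fixed $T$ and a tail piece $[T, c_{n,v}a]$, since only on the tail is the asymptotic $K_v(y) = \sqrt{\pi/(2y)}\,e^{-y}(1+O(1/y))$ available. First I would write, using $Z_j$ from \eqref{zjconstant},
$$
\mathbb{P}(2nY_j \le c_{n,v}a) = Z_j^{-1}\int_0^{c_{n,v}a} y^{2j+v-1}K_v(y)\,dy,
$$
and recall from Lemma \ref{constant} (via planar orthogonality, or equivalently by plugging the integral representation of $K_v$ into \eqref{zjconstant} and using $\int_1^\infty e^{-yt}(t^2-1)^{v-1/2}y^{2j+v-1}\,dt\,dy$ Fubini'd and evaluated) that $Z_j = \frac{\sqrt\pi\,\Gamma(2j+2v)\,\Gamma(j)}{2^{v+1}\Gamma(j+v+1/2)}$. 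By the Stirling formula quoted after Lemma \ref{constant}, $Z_j = \sqrt{\tfrac{\pi}{2}}\,\Gamma(2j+v-1/2)\,(1+o(1))$ for $j$ large and $v$ bounded (the $\Gamma$-ratio $\Gamma(2j+2v)\Gamma(j)/(\Gamma(j+v+1/2)\Gamma(2j+v-1/2))$ tends to $1$); this identifies the claimed normalizing constant $\Gamma(2j+v-1/2)$ up to the factor $\sqrt{\pi/2}$, which is exactly the prefactor appearing in the asymptotics of $K_v$, so it will cancel.

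Next I would control the numerator. Fix $T$ large enough that $K_v(y) = \sqrt{\pi/(2y)}\,e^{-y}(1 + R(y))$ with $|R(y)| \le \varepsilon$ for $y \ge T$. On $[T, c_{n,v}a]$ this gives
$$
\int_T^{c_{n,v}a} y^{2j+v-1}K_v(y)\,dy = \sqrt{\tfrac{\pi}{2}}\int_T^{c_{n,v}a} y^{2j+v-3/2}e^{-y}\,dy\,(1+O(\varepsilon)).
$$
For the bulk piece $[0,T]$, since $K_v$ is continuous and integrable near $0$ against the weight $y^{2j+v-1}$ (with $2j+v-1 \ge 1$), I would bound $\int_0^T y^{2j+v-1}K_v(y)\,dy \le T^{2j+v-1}\int_0^T K_v(y)\,dy = C(T,v)\,T^{2j+v-1}$, and compare this with the corresponding piece of $\sqrt{\pi/2}\int_0^{c_{n,v}a} y^{2j+v-3/2}e^{-y}\,dy$. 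The key point — this is the step I expect to be the main obstacle — is that both $\int_0^T y^{2j+v-1}K_v(y)\,dy$ and $\int_0^T y^{2j+v-3/2}e^{-y}\,dy$ are negligible, as $j\to\infty$, relative to the full integral $\int_0^{c_{n,v}a} y^{2j+v-3/2}e^{-y}\,dy$. This requires knowing that the latter integral is not itself exponentially small in a way that swamps the $T^{2j+v-1}$ bulk term; concretely, since $c_{n,v}a \to \infty$ and (by the hypothesis $c_{n,v}a/n$ bounded in the intended applications, or more simply because we only need $c_{n,v}a$ to eventually exceed any fixed threshold) the mass of $y^{2j+v-3/2}e^{-y}$ concentrates near $y \approx 2j+v-3/2$, one checks via Lemma \ref{estioninte} that $\int_0^{c_{n,v}a} y^{2j+v-3/2}e^{-y}\,dy \ge \frac{1}{2j+v+1/2}(2j+v-3/2)^{2j+v-1/2}e^{-(2j+v-3/2)}$ whenever $c_{n,v}a > 2j+v-3/2$, and handling separately the regime $c_{n,v}a \le 2j+v-3/2$ using item (4) of Lemma \ref{estioninte}; in either regime the fixed bound $T^{2j+v-1}$ is smaller by an exponential factor in $j$ (for $T$ fixed), hence absorbed into the $(1+o(1))$.

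Putting these estimates together: the $[0,T]$ contributions to numerator and to $\sqrt{\pi/2}\int_0^{c_{n,v}a} y^{2j+v-3/2}e^{-y}\,dy$ are both $o(1)$ times the total, the $[T, c_{n,v}a]$ contributions agree up to $(1+O(\varepsilon))$, and the normalizing constants satisfy $Z_j = \sqrt{\pi/2}\,\Gamma(2j+v-1/2)(1+o(1))$. Therefore
$$
\mathbb{P}(2nY_j \le c_{n,v}a) = \Gamma(2j+v-1/2)^{-1}\int_0^{c_{n,v}a} y^{2j+v-3/2}e^{-y}\,dy\,(1+O(\varepsilon))(1+o(1)),
$$
and since $\varepsilon > 0$ was arbitrary (with $T = T(\varepsilon)$ chosen first and then $j$ large), the error is genuinely $1 + o(1)$, which is the claim. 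I would remark that the same splitting argument underlies the identity $Z_j = \sqrt{\pi/2}\,\Gamma(2j+v-1/2)(1+o(1))$ used implicitly in Lemma \ref{asymlem1}, so the two proofs are parallel and one could factor out a single lemma; here I keep it inline for brevity.
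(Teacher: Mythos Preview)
Your approach is sound in outline but takes a genuinely different route from the paper's. The paper avoids the behavior of $K_v$ near the origin entirely by passing to the complement: it first establishes $Z_j=\sqrt{\pi/2}\,\Gamma(2j+v-1/2)(1+o(1))$ via Stirling (exactly as you do), then writes
\[
\mathbb{P}(2nY_j\le c_{n,v}a)=1-\sqrt{\tfrac{\pi}{2}}\,Z_j^{-1}\!\int_{c_{n,v}a}^{\infty}y^{2j+v-3/2}e^{-y}\,dy\,(1+o(1)),
\]
using the tail asymptotic of $K_v$ on $[c_{n,v}a,\infty)$ where $y$ is uniformly large, and finally recombines via $1-\Gamma^{-1}\!\int_{c_{n,v}a}^{\infty}=\Gamma^{-1}\!\int_0^{c_{n,v}a}$. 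This is shorter: no threshold $T$, no bulk estimate. Your direct splitting is more laborious but arguably more transparent, since the $(1+o(1))$ multiplies the target integral directly rather than arising from a subtraction of two quantities both close to $1$.

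There is one slip to repair in your bulk bound. The estimate $\int_0^T y^{2j+v-1}K_v(y)\,dy\le T^{2j+v-1}\!\int_0^T K_v(y)\,dy$ fails for $v\ge 1$, because $K_v(y)\sim\tfrac{\Gamma(v)}{2}(2/y)^v$ near $0$ and hence $\int_0^T K_v(y)\,dy=\infty$. The fix is immediate: pull out $y^{2j-1}\le T^{2j-1}$ instead and use that $y\mapsto y^vK_v(y)$ is bounded on $(0,T]$ (it tends to $\Gamma(v)2^{v-1}$ as $y\to0^+$ for $v>0$; the case $v=0$ needs only a trivial modification since $K_0$ has a merely logarithmic singularity). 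This yields $\int_0^T y^{2j+v-1}K_v(y)\,dy\le C(T,v)\,T^{2j}/(2j)$, and the rest of your negligibility argument goes through unchanged.
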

 \begin{proof}
 In fact, Lemma \ref{constant} and the Stirling formula work together to ensure that 
$$\aligned \log Z_j&=\log \pi+(2j+2v-1/2)\log(2j+2v)+(j-1/2)\log j\\ 
&\quad -(v+1/2)\log 2-(j+v)\log(j+v+1/2)-(2j+v-1/2)+o(1)\\
&=\log\pi+(2j+v-1)\log(2j+2v)-(j-1/2)\log(1+v/j)\\
&\quad-(j+v)\log(1+1/(2(j+v)))-(2j+v-1/2)+o(1)
\endaligned $$ 
for $j$ large enough. 
Apply the Taylor formula $\log(1+x)=x+o(x)$ for $x,$ with $|x|$ small enough, to obtain 
$$(j-1/2)\log(1+v/j)+(j+v)\log(1+1/(2(j+v)))=v+1/2+o(1)$$
and $$(2j+v-1)\log(2j+2v)=(2j+v-1)\log(2j+v-1/2)+v+1/2+o(1).$$ 
This implies 
 $$\aligned \log Z_j&=\log\pi+(2j+v-1)\log(2j+v-1/2)-(2j+v-1/2)+o(1),
\endaligned $$ 
which is equivalent to saying that
$$Z_j=\sqrt{\frac{\pi}{2}}\Gamma(2j+v-1/2)(1+o(1)).$$
Thus, 
$$\aligned \mathbb{P}(2n Y_j\le c_{n, v} a)&=1-\sqrt{\frac{\pi}{2}}Z_j^{-1} \int_{c_{n, v} a}^{+\infty} y^{2j+v-3/2} e^{-y} dy(1+o(1))\\
&=1-\frac{1}{\Gamma(2j+v-1/2)}\int_{c_{n, v} a}^{+\infty} y^{2j+v-3/2} e^{-y} dy(1+o(1))\\
&={\Gamma(2j+v-1/2)}^{-1}\int_0^{c_{n, v} a} y^{2j+v-3/2} e^{-y} dy(1+o(1)). 
\endaligned $$
The proof is complete. 
\end{proof}
Based on Lemma \ref{leexpre}, we are able to state a parallel asymptotic for 
 $\log\mathbb{P}(2n Y_j\le c_{n, v} a_{n}).$   
\begin{lem}\label{lelower1} Consider the random variables $(Y_j)$ and given $a_{n}>0$ such that $c_{n, v} \,a_{n}$ tends to the infinity and $c_{n, v} a_{n}/n$ is bounded.   
 \begin{enumerate}
\item When $j$ satisfy $2j+v-5/2>c_{n, v} \, a_{n},$ it holds 
\begin{equation}\label{asymb1}\log\mathbb{P}(2n Y_j\le c_{n, v} a_{n})=-2j\log \frac{j}{n a_{n}}+2j-2n a_{n}+\tilde{O}(\log n).\end{equation}
\item By contrast, for $j$ such that  $2j+v-5/2\le c_{n, v} \, a_{n},$	we have 
\begin{equation}\label{asymb2}\log\mathbb{P}(2n Y_j\le c_{n, v} a_{n})=\tilde{O}(\log n).\end{equation} 
 \end{enumerate}	

 \end{lem}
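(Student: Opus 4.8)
The plan is to follow the proof of Lemma \ref{asymlem1} almost verbatim, replacing the tail integral $\int_{c_{n,v}a_n}^{\infty}$ by the bulk integral $\int_0^{c_{n,v}a_n}$ and using the representation from Lemma \ref{leexpre} in place of the asymptotic $K_v(y)=Ce^{-y}(1+O(y^{-1}))$, which is of no use near $y=0$. Write $A_n=c_{n,v}a_n$ and $b=2j+v-3/2$. Lemma \ref{leexpre} gives
$$\log\mathbb{P}(2nY_j\le A_n)=-\log\Gamma(b+1)+\log\int_0^{A_n}y^{b}e^{-y}\,dy+o(1),$$
and the Stirling formula yields $\log\Gamma(b+1)=b\log b-b+\tilde{O}(\log n)$, since $b=O(n)$ because $v$ is bounded and $j\le n$. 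Thus everything reduces to estimating $\log\int_0^{A_n}y^{b}e^{-y}\,dy$, and here Lemma \ref{estioninte} items (3)--(4) is exactly what is needed, the dividing line being whether $A_n$ lies below or above $b-1=2j+v-5/2$, which is precisely the hypothesis separating the two items of the lemma.

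In the case $A_n<b-1$, item (4) of Lemma \ref{estioninte} applies directly and gives $\log\int_0^{A_n}y^{b}e^{-y}\,dy=(b+1)\log A_n-A_n+\tilde{O}(\log n)$. It then remains to simplify $(b+1)\log A_n-A_n-b\log b+b$. Using $A_n=2\sqrt{n(n+v)}\,a_n$, the identity $na_n=\tfrac12 A_n\sqrt{n/(n+v)}$, the boundedness of $v$, the hypotheses $A_n/n=O(1)$ and $A_n\to\infty$, and $j\le n$, one checks in turn that $2na_n=A_n+O(1)$, $\log(na_n)=\log A_n-\log 2+O(1/n)$, $b=2j+O(1)$ and $b\log b=2j\log j+2j\log 2+\tilde{O}(\log n)$; substituting these collapses $(b+1)\log A_n-A_n-b\log b+b$ to $-2j\log\frac{j}{na_n}+2j-2na_n+\tilde{O}(\log n)$, which is the formula claimed in item (1). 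Note that $A_n\to\infty$ together with $A_n<b-1$ forces $j\to\infty$, so Lemma \ref{leexpre} is legitimately applicable here.

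In the case $A_n\ge b-1$, I would not apply item (3) of Lemma \ref{estioninte} at its boundary but instead sandwich the integral: from above $\int_0^{A_n}y^{b}e^{-y}\,dy\le\int_0^{\infty}y^{b}e^{-y}\,dy=\Gamma(b+1)$, and from below $\int_0^{A_n}y^{b}e^{-y}\,dy\ge\int_{b-2}^{b-1}y^{b}e^{-y}\,dy\ge (b-2)^{b}e^{-(b-1)}$, using $A_n\ge b-1$ and the monotonicity of $y^{b}e^{-y}$ on $[0,b]$. Both bounds equal $\exp(b\log b-b+\tilde{O}(\log n))$, so $\log\int_0^{A_n}y^{b}e^{-y}\,dy=b\log b-b+\tilde{O}(\log n)$, which cancels $\log\Gamma(b+1)$ and leaves $\log\mathbb{P}(2nY_j\le A_n)=\tilde{O}(\log n)$, the formula in item (2). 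For the finitely many $j$ too small for Lemma \ref{leexpre} to be invoked, item (2) is anyway immediate because $\mathbb{P}(2nY_j\le A_n)\to1$ as $A_n\to\infty$.

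The only real difficulty is the bookkeeping in the first case: one must make sure that every discarded term, for instance $(v-1/2)\log A_n$, the difference $b\log b-2j\log j$, and errors of the form $jO(1/n)$, is genuinely of size $\tilde{O}(\log n)$ rather than of order $j$ or $A_n$. This is exactly where the hypotheses that $v$ is bounded, that $c_{n,v}a_n/n$ is bounded, and that $j\le n$ enter; since it is the same accounting already carried out in Lemma \ref{asymlem1}, no new idea is required.
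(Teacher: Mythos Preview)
Your proposal is correct and follows essentially the same approach as the paper: invoke Lemma~\ref{leexpre} to reduce to the incomplete gamma integral $\int_0^{A_n}y^{b}e^{-y}\,dy$ with $b=2j+v-3/2$, use Stirling on $\Gamma(b+1)$, and then bound the integral via Lemma~\ref{estioninte} (item~(4) in the first regime, and an elementary sandwich in the second). Your lower bound $(b-2)^{b}e^{-(b-1)}$ for the second regime is a minor variant of the paper's use of Lemma~\ref{estioninte}(3); your explicit handling of the finitely many small $j$ for which Lemma~\ref{leexpre} is not directly applicable is a point the paper glosses over.
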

\begin{proof} Since $c_{n, v}\, a_{n}$ is large enough, Lemma \ref{leexpre} entails that 
$$\mathbb{P}(2n Y_j\le c_{n, v} a_{n})={\Gamma(2j+v-1/2)}^{-1}\int_0^{c_{n, v} a_{n}} y^{2j+v-3/2} e^{-y} dy(1+o(1)).$$ 
For $j$ such that $2j+v-5/2>c_{n, v} a_{n}$ Lemma \ref{estioninte} ensures the following estimates  
$$\aligned\log \int_0^{c_{n, v} a_{n}} y^{2j+v-3/2} e^{-y}dy
\le & (2j+v-1/2)\log(c_{n, v} a_{n})-c_{n, v} a_{n}\\
=&2j\log(c_{n, v} a_{n})-c_{n, v} a_{n}+O(\log c_{n, v} a_{n}); \\
\log \int_0^{c_{n, v} a_{n}} y^{2j+v-3/2} e^{-y}dy
\ge & (2j+v-1/2)\log(c_{n, v} a_{n})-c_{n, v} a_{n}\\ 
& - \log(2j+v-1/2) \\
=&2j\log(c_{n, v} a_{n})-c_{n, v} a_{n}+O(\log j).
\endaligned $$
The Stirling formula entails again that
$$\log\Gamma(2j+v-1/2)=2j\log(2j)-2j+\tilde{O}(\log n)$$ since $j$ is sufficiently large. 
Putting all these facts together, we have  
$$\log \mathbb{P}\left( 2j Y_j\le c_{n, v} \, a_{n}\right)=-2j\log \frac{j}{n a_{n}}+2j-2n a_{n}+\tilde{O}(\log n).$$  
Meanwhile, for $j$ satisfying $2j+v-5/2\le c_{n, v} \, a_{n},$ 
we see 
$$\aligned\log \int_0^{c_{n, v} a_{n}} y^{2j+v-3/2} e^{-y}dy&\le (2j+v-3/2)\log(2j+v-3/2)-2j+O\left(\log (c_{n, v} a_{n})\right)\\
&=2j\log(2j)-2j+\tilde{O}(\log n)
\endaligned $$ 
and 
$$\aligned\log \int_0^{c_{n, v} a_{n}} y^{2j+v-3/2} e^{-y}dy&\ge (2j+v-1/2)\log(2j+v-3/2)-(2j+v-3/2)+(\log j)\\
&=2j\log(2j)-2j+\tilde{O}(\log n).
\endaligned $$
Therefore, one gets that
$$\log \mathbb{P}\left( 2n Y_j\le c_{n, v} \, a_{n}\right)=\tilde{O}(\log n).$$
\end{proof}

\subsection{Proof of Theorem \ref{extremaxdp} when $v$ is bounded}  
With the help of Lemma \ref{s}, for the right deviation probability in Theorem \ref{extremaxdp}, it suffices to prove 
\begin{equation}\label{u1} 
\lim_{n\to\infty}\frac{1}{n}\log\mathbb{P}(X_{(n)}\ge x)=-I_{0}^{(r)}(x)	
\end{equation}
 for any $x>1.$
It was proved in \cite{JQ} that $\mathbb{P}(X_i\ge x)$ is increasing on $i,$ which implies  
 $$\mathbb{P}(X_{n}\ge x)\le \mathbb{P}(X_{(n)}\ge x)\le \sum_{i=1}^n \mathbb{P}(X_i\ge x)\le n\mathbb{P}(X_n\ge x).$$ 
Therefore, 
$$\lim_{n\to\infty} \frac1n\log\mathbb{P}(X_{(n)}\ge x)= \lim_{n\to\infty} \frac1n\log \mathbb{P}(X_n\ge x).$$ 
Since $c_{n, v} x\ge 2n+v-1/2$ when $x>1,$ Lemma \ref{asymlem1}, with $a_{n}\equiv x,$ guarantees that   
$$\log \mathbb{P}(X_{n}\ge x)=\log\mathbb{P}(2nY_n\ge c_{n, v} x)=2n\log x+2n-2n x+\tilde{O}(\log n).$$
Consequently, 
$$\lim_{n\to\infty}\frac{1}{n}\log \mathbb{P}(X_{n}\ge x)=2\log x+2-2x=-I_0^{(r)}(x).$$ 
Now, we prove the left deviation probability.  
Given any $0<x<1,$ it follows from the definition that 
$$\mathbb{P}(X_{(n)}\le x)=\prod_{j=1}^n \mathbb{P}(X_j\le x).$$  
Inspired by Lemma \ref{lelower1}, we set 
$$j_0:=\max\{j: 2j+v-5/2\le c_{n, v} x\}.$$ 
Lemma \ref{lelower1} enables us to claim the following two equalities 
$$ \log\mathbb{P}(X_j\le x)=\begin{cases}-2j\log \frac{j}{n x}+2j-2n x+\tilde{O}(\log n), & j\ge j_0+1;\\ 
\tilde{O}(\log n), & j\le j_0. 
\end{cases}
$$
On the one hand, with the help of Lemma \ref{ma}, we get that  
$$\aligned \sum_{j=j_0+1}^n \log\mathbb{P}(X_j\le x)&=\sum_{j=j_0+1}^n \left(2j\log(n\, x)-2n\, x-2j\log j+2j\right)+o(n^2)\\
&=(n^2-j_0^2)\log (n x)+3(n^2-j_0^2)/2-2n x(n-j_0)\\
&\quad -n^2\log n+j_0^2\log j_0+o(n^2)\\
&=3(n^2-j_0^2)/2-2 n(n-j_0)x+n^2\log x+j_0^2\log \frac{j_0}{n x}+o(n^2).
\endaligned$$  
On the other hand, we also have 
$$\aligned \sum_{j=1}^{j_0+1} \log\mathbb{P}(X_j\le x)=o(n^2).
\endaligned$$
Finally, the choice of $j_0$ ensures $j_0/n\to x$ as $n\to\infty,$ which makes sure that  
$$\lim_{n\to\infty}\frac{1}{n^2}\sum_{j=1}^n\log \mathbb{P}(X_j\le x)=\log x+\frac {x^2-4x+3}{2}=-I_0^{(l)}(x).$$
The proof of Theorem \ref{extremaxdp} for $v$ bounded is then finished.  
\subsection{Proof of Theorem \ref{extremindp} when $v$ is bounded} 
From the definition in equation \eqref{defx}, 
$$\mathbb{P}(X_{(1)}\ge x)=\prod_{1\le j\le n}\mathbb{P}(X_j\ge x)=\prod_{1\le j\le n} \mathbb{P}(2n Y_{j}\ge c_{n, v} x).$$ 
When $x\ge 1,$ $2j+v-1/2\le c_{n, v} x.$ Thus, it follows from Lemma \ref{asymlem1}, with $a_{n}\equiv x,$ that   
$$\aligned \log \mathbb{P}(X_j\ge x)&=-2j\log\frac{j}{n x} +2j-2n x+O(\log n). 
\endaligned $$
Therefore, use Lemma \ref{ma} to arrive at  
$$\aligned \sum_{j=1}^n \log\mathbb{P}(X_j\ge x)&= \sum_{j=1}^n \left(2j\log(n x)-2j\log (j)+2j-2n x\right)+o(n^2)\\
&=n^2\log (n x)+\frac{3}{2}n^2-n^2\log n-2n^2 x+o(n^2)\\
&=n^2\log x+\frac{3}{2}n^2-2n^2 x+o(n^2).
\endaligned $$
As a consequence, it holds that  
$$\lim_{n\to\infty}\frac1{n^2}\log\mathbb{P}(X_{(1)}\ge x)=-2x+\log x+\frac32=- \widehat{J}_0(x).$$   
Now we suppose $0<x<1.$ According to the condition of Lemma \ref{asymlem1}, set 
$$j_1=\max\{j: 2j+v-1/2\le c_{n, v} x\}.$$  
Write 
\begin{equation}\label{sumall}\aligned \log \mathbb{P}(X_{(1)}\ge x)=\sum_{j=1}^n \log \mathbb{P}(X_j\ge x)=\sum_{j=1}^{j_1} \log \mathbb{P}(X_j\ge x)+\sum_{j=j_1+1}^n \log \mathbb{P}(X_j\ge x).
\endaligned\end{equation}
It follows from Lemma \ref{asymlem1} that  
\begin{equation}\label{sum1} \sum_{j=j_1+1}^n\log \mathbb{P}(X_j\ge x)=O((n-j_1)\log n)=o(n^2). 
\end{equation}
Furthermore, we have that
\begin{equation}\label{sum2}\aligned \sum_{j=1}^{j_1}\log \mathbb{P}(X_j\ge x)&=\sum_{j=1}^{j_1}\left(2j\log(n x)-2j\log (j)+2j-2n x\right)+o(n^2)\\
&=j_1^2\log\frac{n x}{j_1}+\frac{3}{2}j_1^2-2n xj_1+o(n^2)\\
&=-\frac{n^2 x^2}{2}+o(n^2).
\endaligned \end{equation}
Here, the second equality is due to Lemma \ref{ma} and the last equality follows from the choice of $j_1,$ which ensures that $j_1=n x+O(1).$ 
Putting these two asymptotical expressions \eqref{sum1} and \eqref{sum2} into \eqref{sumall}, one gets the desired limit 
$$ \lim_{n\to \infty}\frac{1}{n^2}\log\mathbb{P}(X_{(1)}\ge x)=-\frac{x^2}{2}=- \widehat{J}_0(x).$$ 

\subsection{Proof of Theorem \ref{extremaxmdp} when $v$ is bounded} 
As for Theorem \ref{extremaxdp}, the condition $nl_n^2>\!>\log n$ guarantees that 
$$\lim_{n\to\infty}\frac1{nl_n^2}\log \mathbb{P}(X_{(n)}\ge 1+l_n \,x)=\lim_{n\to\infty}\frac1{nl_n^2}\log \mathbb{P}(2n Y_{n}\ge c_{n, v}(1+l_n \,x)).$$
Since $c_{n, v}(1+l_n\, x)\ge 2n+v-1/2$ for any $x>0,$ we get from Lemma \ref{asymlem1} and the assumption $n\,l_n^2>\!>\log n$ that 
$$\aligned \log\mathbb{P}(2n Y_n\ge c_{n, v}(1+l_n \, x))&=2n\log(1+l_n x)+2n-2n(1+l_n x)+\tilde{O}(\log n) \\
&=-n \,l_n^2\, x^2+o(n \,l_n^2). \endaligned $$
Thus, 
$$\lim_{n\to\infty}\frac{1}{nl_n^2} \log\mathbb{P}(X_{(n)}\ge 1+l_n x)=-x^2.$$ 
Similarly, we have that
$$\log\mathbb{P}(X_{(n)}\le 1-l_n x)=\sum_{j=1}^{j_2}\log\mathbb{P}(X_{j}\le 1-l_n x)+\sum_{j=j_2+1}^n \log\mathbb{P}(X_{j}\le 1-l_n x),$$
where $$j_2=\max\{j: c_{n, v}(1-l_n x)\ge 2j+v-5/2\}.$$ 
Lemma \ref{lelower1} again brings that 
$$\aligned \sum_{j=j_2+1}^n \log\mathbb{P}(X_{j}\le 1-l_n x)&=\sum_{j=j_2+1}^n\left(-2j\log \frac{j}{n(1-l_n x)}+2j-2n (1-l_n x)\right)+\tilde{O}(n\log n) \\
&=3(n^2-j_2^2)/2-2 n(n-j_2)(1-l_n x)+n^2\log (1-l_n x)\\
&\quad +j_2^2\log \frac{j_2}{n(1-l_n x)}+\tilde{O}(n\log n).
\endaligned $$
Considering the choice of $j_2,$ we have 
$$j_2=n(1-l_n x)+O(1).$$ 
Consequently, we have 
$$\aligned \sum_{j=1}^n \log\mathbb{P}(X_{j}\le 1-l_n x)&=n^2 l_n x+\frac12n^2l_n^2 x^2+n^2 \log(1-l_n x)+\tilde{O}(n\log n).
\endaligned $$
The condition $l_n<\!<1,$ and the Taylor formula again indicate 
$$n^2\log (1-l_n x)=- n^2 l_n x -\frac{n^2l_n^2 x^2}{2}-\frac{n^2l_n^3x^3}{3}+O(n^2l_n^3). $$
Finally, we get 
$$\aligned \sum_{j=1}^n \log\mathbb{P}(X_{j}\le 1-l_n x)&=-\frac{n^2l_n^3x^3}{3}+\tilde{O}(n\log n)+o(n^2 l_n^3).
\endaligned $$ 
Since $l_n^3>\!>\log n/n,$ it holds $n^2l_n^3>\!> n\log n.$ Eventually, we get 
$$\lim_{n\to\infty}\frac{1}{n^2l_n^3}\log\mathbb{P}(X_{(n)}\le 1-l_n x)=-\frac{x^3}{3}.$$
\subsection{Proof of Theorem \ref{extreminmdp} when $v$ bounded} 
 We see, 
 \begin{equation}\label{sum3}\lim_{n\to\infty}\frac1{n^2 l_n^2}\log \mathbb{P}(X_{(1)}\ge l_n \,x)=\lim_{n\to\infty}\frac1{n^2l_n^2}\sum_{j=1}^n\log \mathbb{P}(X_{j}\ge l_n \,x).\end{equation}
 As for Theorem \ref{extremindp}, we need to consider two cases: either $2j+v-1/2\le c_{n, v}l_n x$ or $2j+v-1/2>c_{n, v}l_n x.$
Set $$j_3=\max\{j: 2j+v-1/2\le c_{n, v} l_n \, x\}.$$ 
Since for any $j\ge j_3+1,$ 
$$\log \mathbb{P}(X_j\ge l_n x)=\tilde{O}(\log n),$$ 
we see 
\begin{equation}\label{sum4}\sum_{j=j_3+1}^n \log \mathbb{P}(X_j\ge l_n \, x)=\tilde{O}(n \log n)=o(n^2 l_n^2).\end{equation}
Here, we use the condition on $l_n,$ which ensures $$\frac{n\log n}{n^2 l_n^2}=\frac{\log n}{n l_n^2}=o(1)$$ for $n$ large enough.
On the other hand, Lemma \ref{asymlem1}, with $a_{n, v}=l_n\, x,$ implies that 
\begin{equation}\label{sum5} \aligned \sum_{j=1}^{j_3}\log \mathbb{P}(X_{j}\ge l_n \,x)&=\sum_{j=1}^{j_3}\left(-2j\log \frac{j}{n l_n x}+2j-2n l_n\, x\right)+\tilde{O}(j_3 \log n)\\
&=\frac{3 j_3^2}{2}-j_3^2\log \frac{j_3}{n l_n x}-2j_3 n l_n x+\tilde{O}(j_3\log n)\\
&=-\frac{n^2 l_n^2 x^2}{2}+o(n^2 l_n^2). 
\endaligned\end{equation}
Here the last equality is true based on the same reason as \eqref{sum2} and the fact $j_3=n l_n x+O(1).$ 
Plugging \eqref{sum4} and \eqref{sum5} into \eqref{sum3}, we have 
$$\lim_{n\to\infty}\frac{1}{n^2 l_n^2}\log \mathbb{P}(X_{(1)}\ge l_n x)=-\frac{1}{2} x^2.$$  

\section{The case $v\to\infty$} 
When $v\to\infty,$ Lemma \ref{kp} presents the asymptotical behavior of $K_v(vy)$ uniformly on $y\in (0, +\infty),$
which guides us to consider the following form 
$$\aligned \mathbb{P}(X_{j}\ge a)&=\mathbb{P}\left(\frac{2n Y_j}{v}\ge \frac{2\sqrt{n(n+v)}}{v} a\right)=\tilde{Z}_j^{-1}\int_{c_{n, v} a/v}^{\infty} y^{2j+v-1} K_v(vy) dy. 
\endaligned $$
Here, $$\tilde{Z}_j=\int_0^{\infty} y^{2j+v-1} K_{v}(vy) dy=v^{-(2j+v)}\int_0^{\infty} y^{2j+v-1} K_{v}(y) dy =v^{-(2j+v)} Z_j.$$
Recall that $c_{n, v}=2\sqrt{n(n+v)}$ and $$\tau_j(y)=\sqrt{1+y^2}-\log(1+\sqrt{1+y^2})+\frac{1}{4v}\log(1+y^2)-\frac{2j-1}{v}\log y$$ 
for $y>0$ and $x_j$ is the unique minimizer of $\tau_j.$  
The third item of Lemma \ref{kp} says $$ y^{2j+v-1} K_v(vy)=\sqrt{\frac{\pi}{2v}} e^{-v\tau_j(y)}(1+O(v^{-1}))$$ uniformly on $y\in (0, +\infty).$ Thereby, the probability $\mathbb{P}(X_{j}\ge a)$ could be written as  
\begin{equation}\label{keyex}\aligned \mathbb{P}(X_{j}\ge a)&=(1+o(1))\sqrt{\frac{\pi}{2}} v^{2j+v-1/2} Z_j^{-1} \int_{c_{n, v} a/v}^{+\infty}e^{-v\tau_j(y)} dy\\
&=\frac{2^{v+1/2}v^{2j+v-1/2}\Gamma(j+v+1/2)}{\,\Gamma(2j+2v)\,\Gamma(j)}\int_{c_{n, v} a/v}^{+\infty}e^{-v\tau_j(y)} dy(1+o(1))
\endaligned \end{equation} 
and similarly, 
\begin{equation}\label{keyex1}\aligned \mathbb{P}(X_{j}\le a)=\frac{2^{v+1/2}v^{2j+v-1/2}\Gamma(j+v+1/2)}{\,\Gamma(2j+2v)\,\Gamma(j)}\int_0^{c_{n, v} a/v}e^{-v\tau_j(y)} dy(1+o(1)).
\endaligned \end{equation} 

As in the preceding section, we first give an important lemma on 
$\log\mathbb{P}(X_j\ge a)$ when $a>0$. Since $v$ tends to the positive infinity in this section, we will use $a_{n, v}$ and $l_{n, v},$ instead of $a_n, l_n,$ respectively, to emphasize the dependence and to unify the notations.     

\begin{lem}\label{asymlem2} Consider the random variables $(X_j).$ Given a sequence $(a_{n, v})$ positive and bounded, which makes 
$\sqrt{n}\, a_{n, v}\to\infty$ as $n\to\infty.$ 
\begin{enumerate}
\item For the $j$ satisfying $c_{n, v} a_{n, v}/v>x_j,$ it holds that
$$\aligned \log \mathbb{P}(X_j\ge a_{n, v})&=(2j+v-1)\log v+(2j+v)(1-\log 2)-(j+v-1/2)\log(j+v)\\
 &\quad -(j-1/2)\log j-v \, u(c_{n, v} a_{n, v}/v)+(2j-1)\log(c_{n, v}a_{n, v}/v)+\tilde{O}(\log n). \endaligned $$
\item By contrast, for $j$ such that $c_{n, v} a_{n, v}/v\le x_j,$ we have that
$$\log \mathbb{P}(X_j\ge a_{n, v})=\tilde{O}(\log n).$$
\end{enumerate}
\end{lem}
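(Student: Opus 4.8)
The plan is to start from the exact representation of $\mathbb{P}(X_j\ge a_{n,v})$ in \eqref{keyex}, namely
$$
\mathbb{P}(X_j\ge a_{n,v})=\frac{2^{v+1/2}v^{2j+v-1/2}\Gamma(j+v+1/2)}{\Gamma(2j+2v)\Gamma(j)}\int_{c_{n,v}a_{n,v}/v}^{+\infty}e^{-v\tau_j(y)}\,dy\,(1+o(1)),
$$
take logarithms, and treat the prefactor and the integral separately. For the prefactor I would apply Stirling's formula $\log\Gamma(z)=(z-1/2)\log z-z+\tfrac12\log(2\pi)+o(1)$ to each of $\Gamma(j+v+1/2)$, $\Gamma(2j+2v)$ and $\Gamma(j)$; since $j\ge 1$ and $v\to\infty$ both arguments are large, so this is legitimate. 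Collecting terms and absorbing every $O(\log n)$, $O(\log v)$ and $O(1)$ contribution into $\tilde O(\log n)$, the prefactor's logarithm becomes $(2j+v-1)\log v+(2j+v)(1-\log 2)-(j+v-1/2)\log(j+v)-(j-1/2)\log j+\tilde O(\log n)$, which is precisely the first four groups of terms in the claimed formula. One small point to check is that $j+v\le n+v$, so all these logs are $O(\log n)$ and the $\tilde O$ bookkeeping is uniform in $j$.

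The second ingredient is the asymptotics of $\int_{c_{n,v}a_{n,v}/v}^{+\infty}e^{-v\tau_j(y)}\,dy$, for which I would invoke Lemma~\ref{keyl}. Write $a:=c_{n,v}a_{n,v}/v$. In case (1), $a>x_j$, so items (2) of Lemma~\ref{keyl} sandwich the integral between $\frac{1}{v\tau_j'(M)}e^{-v\tau_j(a)}(1-e^{v(\tau_j(a)-\tau_j(M))})$ and $\frac{1}{v\tau_j'(a)}e^{-v\tau_j(a)}$; choosing $M$ a fixed bounded distance above $a$ (using that $a$ stays bounded because $a_{n,v}$ is bounded and $c_{n,v}/v\sim n/\sqrt{n(n+v)}$ is bounded) makes the correction factor bounded away from $0$, and $\tau_j'(a)$ is bounded below by a quantity that is at worst polynomially small in $n$, so $\log\int_a^\infty e^{-v\tau_j(y)}dy=-v\tau_j(a)+\tilde O(\log n)$. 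Finally I expand $v\tau_j(a)=v\,u(a)+\tfrac14\log(1+a^2)-(2j-1)\log a$ by the definition of $\tau_j$; the middle term is $O(\log n)$ and gets absorbed, leaving $-v\,u(c_{n,v}a_{n,v}/v)+(2j-1)\log(c_{n,v}a_{n,v}/v)+\tilde O(\log n)$, which supplies exactly the remaining two terms in the statement. In case (2), $a\le x_j$, and items (3)--(4) of Lemma~\ref{keyl} show $\int_a^\infty e^{-v\tau_j(y)}dy$ is, up to a factor polynomial in $n$, equal to $e^{-v\tau_j(x_j)}$; the bound $v x_j\asymp\sqrt{j(j+v)}$ from item (1) lets one evaluate $v\tau_j(x_j)$ and check, together with the Stirling expansion of the prefactor, that the two $e^{\pm(\cdots)}$ contributions cancel to within $\tilde O(\log n)$ — equivalently, $\mathbb{P}(X_j\ge a_{n,v})$ is bounded above by $1$ and below by something of the form $e^{-\tilde O(\log n)}$ — giving $\log\mathbb{P}(X_j\ge a_{n,v})=\tilde O(\log n)$.

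The main obstacle, and the place requiring real care rather than routine computation, is the uniformity of all error terms in $j$ (for $1\le j\le n$) and the simultaneous control of the lower bound on $\tau_j'(a)$ and of the auxiliary point $M$. The sandwich from Lemma~\ref{keyl} only yields a clean $-v\tau_j(a)+\tilde O(\log n)$ if $\tau_j'(a)$ is not smaller than an inverse power of $n$ and if $\tau_j(M)-\tau_j(a)$ can be kept bounded below uniformly; when $a$ is very close to $x_j$ the derivative $\tau_j'(a)$ degenerates, so one must either show $a$ is bounded away from $x_j$ under the hypothesis $c_{n,v}a_{n,v}/v>x_j$ (this is where $\sqrt n\,a_{n,v}\to\infty$ and the lower bound $v x_j\ge 2\sqrt{(j-3/4)(j+v-3/4)}$ enter) or handle a boundary layer around $x_j$ by the crude two-sided bounds of item (5). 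I would organize the proof so that the regime $a\ge x_j+\delta$ for a suitable slowly-shrinking $\delta$ uses the sharp sandwich, while $|a-x_j|<\delta$ is folded into the $\tilde O(\log n)$ estimate exactly as in case (2); everything else is Stirling plus the definition of $\tau_j$.
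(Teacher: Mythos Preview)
Your overall architecture is the same as the paper's: Stirling's formula for the prefactor in \eqref{keyex}, then the sandwich bounds of Lemma~\ref{keyl} for the integral, and finally the expansion $v\tau_j(a)=v\,u(a)+\tfrac14\log(1+a^2)-(2j-1)\log a$. The prefactor computation you describe is exactly what the paper does, and your treatment of case~(2) via $v\tau_j(x_j)$ and the approximate location of $x_j$ also matches.

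There is, however, a concrete error in your case~(1) analysis. You write that $a=c_{n,v}a_{n,v}/v$ stays bounded because ``$c_{n,v}/v\sim n/\sqrt{n(n+v)}$ is bounded''. But $c_{n,v}/v=2\sqrt{n(n+v)}/v$, not $\sqrt{n/(n+v)}$; when $v=o(n)$ (the regime $\alpha=0$, which the lemma must cover since later applications use it) this ratio tends to $+\infty$, so $a$ is unbounded. Consequently your proposal to take $M$ ``a fixed bounded distance above $a$'' does not give uniform control, and your boundary-layer split in $j$ is built on the same misconception.

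The paper handles this differently and avoids any split in $j$. It takes $M=c_{n,v}(1+a_{n,v})/v$, so $M-a=c_{n,v}/v$, and uses convexity to get
\[
v\bigl(\tau_j(M)-\tau_j(a)\bigr)\ \ge\ v\,\tau_j'(a)\,(M-a)\ =\ c_{n,v}\,\tau_j'(a)\ =\ \frac{c_{n,v}}{\sqrt v}\cdot\sqrt v\,\tau_j'(a).
\]
It then shows $\sqrt v\,\tau_j'(a)\to\infty$ and $\log\bigl(\sqrt v\,\tau_j'(a)\bigr)=\tilde O(\log n)$ by a trichotomy on the relative size of $v$ and $c_{n,v}a_{n,v}$ (namely $v\gg c_{n,v}a_{n,v}$, $v=O(c_{n,v}a_{n,v})$, and $v\ll c_{n,v}a_{n,v}$), rather than by a split on how close $a$ is to $x_j$. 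The hypothesis $\sqrt n\,a_{n,v}\to\infty$ enters precisely through the lower bound $c_{n,v}a_{n,v}/\sqrt v\ge 2\sqrt n\,a_{n,v}$ in the first two regimes. So the correct mechanism is a case analysis on the $v$ versus $c_{n,v}a_{n,v}$ scale, not on the distance $|a-x_j|$.
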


Similarly, we give the asymptotic on $\log \mathbb{P}(X_j\le a_{n, v}).$ 

\begin{lem}\label{lelowerinf} Consider the same situations as Lemma \ref{asymlem2}. 
\begin{enumerate}
\item For $j$ satisfying $c_{n, v} a_{n, v}/v>x_j,$ it holds that
$$\log \mathbb{P}(X_j\le a_{n, v})=\tilde{O}(\log n).$$
 
\item By contrast, for $j$ such that $c_{n, v} a_{n, v}/v\le x_j,$ we have that
$$\aligned \log \mathbb{P}(X_j\le a_{n, v})&=(2j+v-1)\log v+(2j+v)(1-\log 2)-(j+v-1/2)\log(j+v)\\
 &\quad -(j-1/2)\log j-v \, u(c_{n, v} a_{n, v}/v)+(2j-1)\log(c_{n, v}a_{n, v}/v)+\tilde{O}(\log n). \endaligned $$
\end{enumerate}
\end{lem}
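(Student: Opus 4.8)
The plan is to mirror the structure of Lemma~\ref{asymlem1}'s proof, but to use the uniform asymptotic of $K_v(vy)$ from part~(3) of Lemma~\ref{kp} together with the integral bounds of Lemma~\ref{keyl}, since here $v\to\infty$. Starting from the representation \eqref{keyex1}, we must estimate $\int_0^{c_{n,v}a_{n,v}/v}e^{-v\tau_j(y)}\,dy$ and then combine it with the explicit prefactor. The prefactor is handled once and for all by the Stirling formula: $\log\bigl(2^{v+1/2}v^{2j+v-1/2}\Gamma(j+v+1/2)/(\Gamma(2j+2v)\Gamma(j))\bigr)$ expands to $(2j+v-1)\log v+(2j+v)(1-\log 2)-(j+v-1/2)\log(j+v)-(j-1/2)\log j+\tilde O(\log n)$, which is exactly the non-integral part of the claimed formula. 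So the whole lemma reduces to showing that $\log\int_0^{c_{n,v}a_{n,v}/v}e^{-v\tau_j(y)}\,dy$ equals $-v\,u(c_{n,v}a_{n,v}/v)+(2j-1)\log(c_{n,v}a_{n,v}/v)+\tilde O(\log n)$ in case~(2), and is $\tilde O(\log n)$ in case~(1) — noting that in case~(1) the integrand's maximum over $[0,c_{n,v}a_{n,v}/v]$ is attained at the interior minimizer $x_j$, and in both cases the normalization built into $\tilde Z_j$ cancels the $e^{-v\tau_j(x_j)}$ scale.

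Concretely, for case~(2), where $b:=c_{n,v}a_{n,v}/v\le x_j$, the integral $\int_0^b e^{-v\tau_j(t)}\,dt$ is controlled by items~(4) and~(5) of Lemma~\ref{keyl}: an upper bound of the form $\frac{1}{-v\tau_j'(b)}e^{-v\tau_j(b)}$ and a matching lower bound $\frac{1}{-v\tau_j'(b_1)}e^{-v\tau_j(b)}(1-e^{v\tau_j(b)-v\tau_j(b_1)})$ for a suitable $b_1\in(0,b)$. One checks that $\tau_j'(b)$ is polynomially bounded away from $0$ on the relevant range (using $\tau_j'(0)=-\infty$ and monotonicity, plus the bound $v\,x_j\asymp \sqrt{j(j+v)}$ from item~(1) to keep $b$ bounded below by a constant times $v^{-1/2}$ when $\sqrt n\,a_{n,v}\to\infty$), so $\log\bigl(1/(v\tau_j'(b))\bigr)=\tilde O(\log n)$; and $v\tau_j(b)=v\,u(b)+\tfrac14\log(1+b^2)-(2j-1)\log b$, whose last two terms are $\tilde O(\log n)$ and $(2j-1)\log b$ respectively. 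That reproduces the stated formula. For case~(1), $b>x_j$, items~(2) and~(3) of Lemma~\ref{keyl} (applied to $\int_0^b=\int_0^\infty-\int_b^\infty$, or directly to item~(5) with $M$ chosen on the left of $x_j$) give $\int_0^b e^{-v\tau_j(t)}\,dt$ trapped between constants times $v^{-1}$ and $4j$ times $e^{-v\tau_j(x_j)}$; combined with $\log\tilde Z_j^{-1}=\log\bigl(\text{prefactor}\bigr)$ and the identity $v\tau_j(x_j)=$ (prefactor-type quantity), everything collapses to $\tilde O(\log n)$, exactly as in the bounded-$v$ case.

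The main obstacle will be the bookkeeping that shows the normalizing constant $\tilde Z_j^{-1}$ times $e^{-v\tau_j(x_j)}$ is $\tilde O(\log n)$ on the log scale — i.e., verifying that the ``prefactor'' extracted from Stirling and the quantity $v\tau_j(x_j)$ differ only by $O(\log n)$. This is essentially the statement that $x_j$, the minimizer, sits where $v\,u(x_j)\approx$ the Legendre-type value determined by the Gamma factors; one proves it by plugging the two-sided estimate $2\sqrt{(j-\tfrac34)(j+v-\tfrac34)}<v\,x_j<2\sqrt{(j-\tfrac12)(j+v-\tfrac12)}$ from item~(1) of Lemma~\ref{keyl} into $v\tau_j(x_j)=v\sqrt{1+x_j^2}-v\log(1+\sqrt{1+x_j^2})-(2j-1)\log x_j+\tfrac14\log(1+x_j^2)$, using $1+x_j^2=(2j-2t+v)^2/v^2$-type algebra with $t\in[\tfrac12,\tfrac34]$, and then matching term-by-term against the Stirling expansion of the Gamma quotient. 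Since everything is needed only up to $\tilde O(\log n)$, the error terms from the $t\in[\tfrac12,\tfrac34]$ ambiguity and from the $O(1/v)$ in Lemma~\ref{kp}(3) are harmless, so this step, while tedious, is routine; I would state it as an auxiliary computation and carry the two cases to their conclusion exactly as above.
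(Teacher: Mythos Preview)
Your proposal is correct and follows essentially the same approach as the paper's proof. One point to tighten: the control of $\log(-v\tau_j'(b))$ in case~(2) is not as simple as your parenthetical suggests --- the paper obtains $\log(-\sqrt{v}\,\tau_j'(c_{n,v}a_{n,v}/v))=\tilde O(\log n)$ via a case split on the relative sizes of $v$ and $c_{n,v}a_{n,v}$, using the constraint $c_{n,v}a_{n,v}\le v x_j\le 2\sqrt{j(j+v)}$ to force a lower bound on $j$, and makes the explicit auxiliary choices $b_1=c_{n,v}a_{n,v}/(2v)$ for case~(2) and $M=\sqrt{j(j+4v)}/(2v)$ for case~(1); otherwise your plan matches the paper exactly.
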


The proofs of Lemmas \ref{asymlem2} and \ref{lelowerinf} are relative lengthy, and they are postponed into the appendix. With these two lemmas at hand, next we are going to verify the theorems one by one. 
\subsection{Proof of Theorem \ref{extremaxdp} when $v\to\infty$.} 
We still claim that
$$\lim_{n\to\infty}\frac{1}{n}\log\mathbb{P}(X_{(n)}\ge x)=\lim_{n\to\infty}\frac{1}{n}\log\mathbb{P}(X_{n}\ge x).$$ 
Since $x>1,$ the second item in Lemma \ref{keyl} entails that
$$\frac{c_{n, v} x}{v x_n}\ge \frac{2\sqrt{n(n+v)} \; x}{2\sqrt{(n-1/2)(n+v-1/2)}}>1.$$ 
Therefore, Lemma \ref{asymlem2}, with $a_{n, v}\equiv x$ and $j=n,$ claims that   
$$\aligned \log \mathbb{P}(X_n\ge x)=-v u\left(\frac{c_{n, v} x}{v}\right)+2n+v-v\log\frac{2(n+v)}{v}+n\log(x^2)+o(n).
\endaligned $$
It follows from the condition $v/n\to\alpha$ that 
$$\lim_{n\to\infty}\frac1n\log\mathbb{P}(X_n\ge x)=-\alpha u\left(\frac{2\sqrt{1+\alpha} \,x}{\alpha}\right)+2+\alpha-\alpha\log\left( 2\left(1+ \frac{1}{\alpha} \right)\right)+\log(x^2).$$
Recall $u(y)=\sqrt{1+y^2}-\log(1+\sqrt{1+y^2})$ and observe the following relationship 
$$\sqrt{1+\frac{4(1+\alpha) x^2}{\alpha^2}}-1=\frac{2\kappa_{\alpha}(x)}{\alpha}.$$ Then,  \begin{equation}\label{cofu} u\left(\frac{2\sqrt{1+\alpha} \,x}{\alpha}\right)=1-\log 2+\frac{2 \,\kappa_{\alpha}(x)}{\alpha}-\log\left(1+\frac{\kappa_{\alpha}(x)}{\alpha}\right). \end{equation}
Consequently, we have that
$$\lim_{n\to\infty}\frac1n\log\mathbb{P}(X_{(n)}\ge x)=\lim_{n\to\infty}\frac1n\log\mathbb{P}(X_{n}\ge x)=-I_{\alpha}^{(r)}(x)$$
for $\alpha\in (0, \infty).$ For $\alpha=\infty,$ applying the Taylor formula to $v/n\log(1+n/v)$ and $u(c_{n, v} x/v),$ we will have $I_{\infty}^{(r)}(x)$ replacing $I_{\alpha}^{(r)}.$ The proof is similar when $\alpha=0,$ and is omitted here.  

Now we suppose that $0<x<1.$ Define $j_4$ as 
\begin{equation}\label{j4} j_4=\max\{j: c_{n, v} x>v x_{j}\}.\end{equation}
Lemma \ref{lelowerinf} again says that 
$$\log\mathbb{P}(X_j\le x)=\tilde{O}(\log n)
$$
for any $j\le  j_4$ and 
$$\aligned \log \mathbb{P}(X_j\le x)&=(2j+v-1)\log v+(2j+v)(1-\log 2)-(j+v-1/2)\log(j+v)\\
 &\quad -(j-1/2)\log j-v \, u(c_{n, v} x/v)+(2j-1)\log(c_{n, v} x/v)+\tilde{O}(\log n)\\
 &=:d_{n, v}(j, x)+o(n)\endaligned $$
for any $j>j_4.$ Starting from these estimates, we might have the following simple equality  
$$\log\mathbb{P}(X_{(n)}\le x)=\sum_{j=1}^n \log\mathbb{P}\left(X_{j}\le x\right)= \sum_{j=j_4+1}^n d_{n, v}(j, x)+o(n^2).$$ 
With the help of Lemma \ref{ma}, summing these terms up, we get  simultaneously that 
$$\aligned \sum_{j=1}^n d_{n, v}(j, x)&=(n^2+nv)\log v+(n^2+n v)(1-\log 2)-v n \, u\left( \frac{c_{n, v} x}{v}\right)+n^2\log\left( \frac{c_{n, v} x}{v}\right)\\
&\quad +\frac{n^2+nv}{2}-\frac{(n+v)^2}{2}\log(n+v)+\frac{v^2}{2}\log v-\frac{n^2}{2}\log n+o(n^2); \\
 \sum_{j=1}^{j_4} d_{n, v}(j, x)&=(j_4^2+j_4 v)\log v+(j_4^2+j_4 v)(1-\log 2)-v j_4\, u\left( \frac{c_{n, v} x}{v}\right)+j_4^2\log\left( \frac{c_{n, v} x}{v}\right)\\
&\quad +\frac{j_4^2+j_4 v}{2}-\frac{(j_4+v)^2}{2}\log(j_4+v)+\frac{v^2}{2}\log v-\frac{j_4^2}{2}\log j_4+o(n^2).
\endaligned $$
Some algebras on the difference between these two sums enable us to get that 
\begin{equation}\label{leftdpe}\aligned \sum_{j=j_4+1}^n d_{n, v}(j, x)=&-v(n-j_4)\left(u \left( \frac{c_{n, v} x}{v}\right)-3/2+\log 2+\log\left(1+\frac{n}{v} \right)\right)+\frac{j_4^2}{2}\log \frac{j_4}{n}\\
&+(n^2-j_4^2)\left(\log x+ \frac 3 2 \right)+\frac{(j_4+v)^2}{2}\log\frac{j_4+v}{n+v}+o(n^2).
\endaligned \end{equation}
Suppose $\lim\limits_{n\to\infty} j_4/n=a.$ We go back to Lemma \ref{keyl} and the definition of $j_4$ to figure out that $$c_{n, v} x=2\sqrt{j_4(j_4+v)}(1+O(j_4^{-1})),$$ which indicates $a(a+\alpha)=(1+\alpha) x^2$ and then $$a=\frac{2(1+\alpha)x^2}{\alpha+\sqrt{\alpha^2+4(1+\alpha) x^2}}=\kappa_{\alpha}(x).$$ 
The condition $\lim\limits_{n\to\infty}v/n=\alpha$ and the relationship \eqref{cofu} finally bring us the desired limit  
$$\aligned &\quad \lim_{n\to\infty}\frac{1}{n^2}\sum_{j=1}^{n} \log\mathbb{P}(X_j\le x)\\
&=-\alpha(1-\kappa_{\alpha}(x))\left(\log\frac{1+\alpha}{\kappa_{\alpha}(x)+\alpha}+\frac{2\kappa_{\alpha}(x)}{\alpha}\right)+\frac{\alpha(1-\kappa_{\alpha}(x))}{2}\\
&\quad +\frac{\kappa_{\alpha}(x)^2}{2}\log\kappa_{\alpha}(x)+(1-\kappa_{\alpha}^2(x))\left(\log x+\frac{3}{2}\right)+\frac{(\alpha+\kappa_{\alpha}(x))^2}{2}\log\frac{\kappa_{\alpha}(x)+\alpha}{1+\alpha}\\
&=-\left(\alpha+\frac{\alpha^2}{2}\right)\log\frac{1+\alpha}{\kappa_{\alpha}(x)+\alpha}+\log x+\frac{(\alpha+3-\kappa_{\alpha}(x))(1-\kappa_{\alpha}(x))}{2}. \endaligned $$
Here, for the last equality we use the relationship $\kappa_{\alpha}(x)(\alpha+\kappa_{\alpha}(x))=(1+\alpha)x^2.$
Suppose that $\alpha=+\infty,$ under which $\kappa_{\infty}(x)=x^2.$ Recalling the Taylor formula $\log(1+x)=x-x^2/2+o(x^2),$ one gets that 
$$-\log\frac{1+\alpha}{\kappa_{\alpha}(x)+\alpha}=\frac{\kappa_{\alpha}(x)-1}{\alpha}+\frac{1-\kappa_{\alpha}^2(x)}{2\alpha^2}+o(\alpha^{-2}).$$
Hence, 
$$\lim_{n\to\infty}\frac{1}{n^2}\sum_{j=1}^{n} \log\mathbb{P}(X_j\le x)=\log x+\frac{x^4-4x^2+3}{2}=-I_{\infty}^{(l)}(x).$$ 
Similarly, when $\alpha=0,$ it holds that  
$$\lim_{n\to\infty}\frac{1}{n^2}\sum_{j=1}^{n} \log\mathbb{P}(X_j\le x)=\log x+\frac{x^2-4x+3}{2}=-I_{0}^{(l)}(x).$$
The left deviation probability is obtained. 
 \subsection{Proof of Theorem \ref{extremindp} when $v\to\infty$} 
  
We first work on the simpler case $x\ge 1.$ Observe that when $x\ge 1,$ the following inequality always holds 
$$\frac{c_{n, v} x}{v x_j}\ge \frac{2\sqrt{n(n+v)} \; x}{2\sqrt{(j-1/2)(j+v-1/2)}}>1$$
for any $1\le j\le n.$   
Hence, apply Lemma \ref{asymlem2} again, with $a_{n, v}\equiv x,$ to rewrite   
 \begin{equation}\label{logpform}
 	\aligned \log\mathbb{P}(X_j\ge x)&=(2j+v-1)\log v+(2j+v)(1-\log 2)-(j+v-1/2)\log(j+v)\\
 &\quad -j\log j-v \, u\left(\frac{c_{n, v} x}{v}\right)+(2j-1)\log\left(\frac{c_{n, v} x}{v}\right)+o(n).
\endaligned  \end{equation}
Lemma  \ref{ma} and simple calculus imply that   
\begin{equation}\label{sumofp}
	\aligned \sum_{j=1}^n\log\mathbb{P}(X_j\ge x)&=-\frac{v^2+2n v}{2}\log\left(1+\frac{n}{v}\right)+(n^2+nv)\left(\frac 3 2-\log 2\right)\\
&\quad -v\,n \, u\left(\frac{c_{n, v} x}{v}\right)+n^2\log(2x)+o(n^2).
\endaligned \end{equation}
The condition $\lim\limits_{n\to\infty}v/n=\alpha$ and \eqref{cofu} help us eventually to arrive at
$$\aligned \lim_{n\to\infty}\frac1{n^2}\sum_{j=1}^n\log\mathbb{P}(X_j\ge x)&=-\frac{\alpha(\alpha+2)}{2}\log\left(1+\frac1{\alpha}\right)+\frac{3+\alpha}2+\log x-2\,\kappa_{\alpha}(x)\\
&\quad +\alpha\log\left(1+\frac{\kappa_{\alpha}(x)}{\alpha}\right)=- \widehat{J}_{\alpha}(x). 
\endaligned $$  
 The rate function $ \widehat{J}_0$ and $ \widehat{J}_{\infty}$ are the limits of $ \widehat{J}_{\alpha}$ related to $\alpha=0$ or $\alpha=\infty,$ respectively. 

Now we suppose that $0<x<1.$   
Let $j_4$ be given in \eqref{j4}. For any $1\le j\le j_4,$ since $c_{n, v} x/v>x_{j},$ the same argument as for \eqref{logpform} leads that   
 \begin{equation}\label{logpform1}
 	\aligned \log\mathbb{P}(X_j\ge x)=&(2j+v-1)\log v+(2j+v)(1-\log 2)-(j+v-1/2)\log(j+v)\\
 &-j\log j-v \, u\left(\frac{c_{n, v} x}{v}\right) +(2j-1)\log\left(\frac{c_{n, v} x}{v}\right)+o(n).
\endaligned  \end{equation}
Meanwhile, Lemma \ref{asymlem2} also implies the following expression 
$$\sum_{j_4+1}^n\log\mathbb{P}(X_j\ge x)=o(n^2).$$ 
Therefore, similarly as \eqref{sumofp}, we know  that
\begin{equation}\aligned \label{sumuptoj3}
\sum_{j=1}^{n}\log\mathbb{P}(X_j\ge x)&=\sum_{j=1}^{j_4}
\log\mathbb{P}(X_j\ge x)+\sum_{j=j_4+1}^n\log\mathbb{P}(X_j\ge x)\\
&=-\frac{v^2+2j_4 v}{2}\log \left(1+\frac{j_4}{v}\right)+(j_4^2+j_4 v)\left(\frac{3}{2}-\log 2\right)\\
&\quad -v\,j_4 \, u\left(\frac{c_{n, v} x}{v}\right)-\frac{j_4^2}{2}\log\frac{j_4(j_4+v)}{c_{n, v}^2 x^2}+o(n^2).
\endaligned \end{equation} 
Consequently,    
$$\aligned
&\ \lim_{n\to\infty}\frac{1}{n^2}\log\mathbb{P}(X_j\ge x)\\
=&-\frac{\alpha^2+2\kappa_{\alpha}(x)\alpha}{2}\log\left(1+\frac{\kappa_{\alpha}(x)}{\alpha}\right)+(\kappa_{\alpha}^2(x)+\kappa_{\alpha}(x)\alpha)\left(\frac 3 2-\log 2\right)\\
&-\alpha\kappa_{\alpha}(x)u\left(\frac{2\sqrt{1+\alpha} \;x}{\alpha}\right)+\frac{\kappa^2_{\alpha}(x)}2\log\frac{4(1+\alpha)x^2}{\kappa_{\alpha}(x)(\alpha+\kappa_{\alpha}(x))}.  \endaligned$$  
Using again the expression \eqref{cofu} and the fact $\kappa_{\alpha}(x)(\alpha+\kappa_{\alpha}(x))=(1+\alpha)x^2,$ and combining like terms, we have the following limit as
$$ \lim_{n\to\infty}\frac{1}{n^2}\log\mathbb{P}(X_j\ge x)=-\frac{\alpha^2}{2}\log\left(1+\frac{\kappa_{\alpha}(x)}{\alpha}\right)+\frac{\alpha\kappa_{\alpha}(x)-\kappa_{\alpha}^2(x)}{2} $$
for $\alpha\in (0, +\infty)$ and the limits related to $\alpha=0$ and $\alpha=+\infty$ follow naturally by taking corresponding limits. 

\subsection{Proof of Theorem \ref{extremaxmdp} when $v\to\infty$} 
We prove first the right moderate deviation probability of $X_{(n)}.$ By the conditions on $l_{n, v}$ in the first item, we see both the fact $n l_{n, v}^2>\!>\log n$ and the inequality
$c_{n, v}(1+l_{n, v} x)>x_j$ for any $1\le j\le n$ when $x>0.$ 
Thus $$\lim_{n\to\infty}\frac{1}{n l_{n, v}^2}\log\mathbb{P}(X_{(n)}\ge 1+l_{n, v} x)=\lim_{n\to\infty}\frac{1}{n l_{n, v}^2}\log\mathbb{P}(X_{n}\ge 1+l_{n, v} x).$$
 It follows from Lemma \ref{asymlem2}, with $a_{n, v}=1+l_{n, v} x,$ that  
 \begin{align} &\quad \log\mathbb{P}(X_{n}\ge 1+l_{n, v} x)\notag\\
 &=(2n+v-1)\log v+(2n+v)(1-\log 2)-(n+v-1/2)\log(n+v)\notag\\
 &\quad -n\log n-v \, u(c_{n, v} (1+l_{n, v} x)/v)+(2n-1)\log(c_{n, v}(1+l_{n, v} x)/v)+\tilde{O}(\log(n)). \label{rightlog}
 \end{align} The fact $c_{n, v}=2\sqrt{n(n+v)}$ allows us to write 
 $$\aligned &\quad (2n-1)\log(c_{n, v}(1+l_{n, v} x)/v)\\
 &=(n-\frac12)\log n+
 (n-\frac12)\log(n+v)+(2n-1)\log(2(1+l_{n, v} x))-(2n-1)\log v. 
 \endaligned $$  
 Putting this expression into \eqref{rightlog}, combining alike terms while paying attention to terms relating to $v$ and ignoring the terms with order less than  $\log n,$ one gets that 
 $$\aligned &\quad \log\mathbb{P}(X_{n}\ge 1+l_{n, v} x)+v \, u(c_{n, v} (1+l_{n, v} x)/v)\\
 &=-v u(c_{n, v} (1+l_{n, v} x)/v)+v\log \frac{v}{2(n+v)}+v+2n+2n\log(1+l_{n, v } x)+O(\log n). \endaligned$$ 
 Since $u(x)=\sqrt{1+x^2}-\log(1+\sqrt{1+x^2}),$ setting $d_{n, v}^{(+)}:=\sqrt{v^2+c_{n, v}^2(1+l_{n, v} x)^2},$ we write 
\begin{equation}\label{examp}\aligned v u\left(\frac{c_{n, v} (1+l_{n, v} x)}{v}\right)-v-v\log\frac{v}{2(n+v)}&=d_{n, v}^{(+)}-v-v\log\frac{v+d_{n, v}^{(+)}}{2(n+v)}.
 \endaligned \end{equation}
 Hence, it follows from the Taylor formula \begin{equation}\label{taylor}\log(1+x)=x-\frac{x^2}{2}+O(x^3)\end{equation} for $|x|$ small enough and $n\l_{n, v}^2\gg \log n$ that  
 \begin{equation}\label{s0} \aligned
 	&\quad\log\mathbb{P}(X_{n}\ge 1+l_{n, v} x)\\
 	&=2n+v-d_{n,v}^{(+)}+v\log \left(1-\frac{2n+v-d_{n, v}^{(+)}}{2(n+v)}\right)+2n l_{n, v} x-n l_{n, v}^2x^2+O(n l_{n, v}^3). \endaligned\end{equation} 
 	By definition, 
 	$$d_{n, v}^{(+)}=(v+2n)\sqrt{1+\frac{c_{n, v}^2(2 l_{n, v} x+l_{n, v}^2 x^2)}{(v+2n)^2}}.$$ 
 	Since $c_{n, v}^2=4n(n+v)\le (v+2n)^2$ and $0<l_{n, v}\ll 1,$ it holds that  
 	$$c_{n, v}^2(2 l_{n, v} x+l_{n, v}^2 x^2)/(v+2n)^2=O(c_{n, v}^2 l_{n, v}/(v+2n)^2)=\widetilde{O}(l_{n, v})=o(1).$$   
 Using the Taylor formula $$\sqrt{1+x}=1+\frac x 2- \frac{x^2}{8}+O(x^3)$$ for $|x|$ small enough and ignoring the term $o(n l_{n, v}^2)$, we see that
 $$\aligned  
 d_{n, v}^{(+)}&=(v+2n)\sqrt{1+\frac{c_{n, v}^2(2 l_{n, v} x+l_{n, v}^2 x^2)}{(v+2n)^2}}\\
 &=(v+2n)\left(1+\frac{c_{n, v}^2 (2l_{n, v} x+l_{n, v}^2 x^2)}{2(v+2n)^2}-\frac{c_{n, v}^4(2l_{n, v}x+l_{n, v}^2 x^2)^2}{8(v+2n)^4}+O\left(\frac{c_{n, v}^6 l_{n, v}^3}{(v+2n)^6}\right)\right)\\
 &=(v+2n)\left(1+\frac{c_{n, v}^2 l_{n, v} x}{(v+2n)^2}+\frac{c_{n, v}^2 l_{n, v}^2 x^2}{2(v+2n)^2}-\frac{c_{n, v}^4l_{n, v}^2 x^2}{2(v+2n)^4}+O\left(\frac{c_{n, v}^6 l_{n, v}^3}{(v+2n)^6}\right)\right)\\
 &=(v+2n)\left(1+\frac{c_{n, v}^2 l_{n, v} x}{(v+2n)^2}+\frac{c_{n, v}^2 v^2 l_{n, v}^2 x^2}{2(v+2n)^4}\right)+\widetilde{O}(n l_{n, v}^3),
 \endaligned $$  
 where the last equality holds because 
 $$\frac{c_{n, v}^6 l_{n, v}^3}{(v+2n)^5}=O\left(\frac{n^3 l_{n, v}^3}{(v+2n)^2}\right)=\widetilde{O}(n l_{n, v}^3).$$ 
 Thus, 
 \begin{equation}\label{s2} g_{n, v}^{(+)}:=2n+v-d_{n, v}^{(+)}=-\frac{c_{n, v}^2 l_{n, v} x}{v+2n}-\frac{c_{n, v}^2 v^2 l_{n, v}^2 x^2}{2(v+2n)^3}+\widetilde{O}(n l_{n, v}^3),\end{equation}
 which implies that
 $$ \frac{g_{n, v}^{(+)}}{n+v}=O\left(\frac{nl_{n, v}}{v+2n}\right)=o(1).$$
Starting from this expression, with the help of \eqref{taylor} again, and just picking up the terms relating to $l_{n, v}$ and $l_{n, v}^2$ and examining their coefficients, we obtain that \begin{equation}\label{s1}
\aligned v\log\left(1-\frac{2n+v-d_{n, v}^{(+)}}{2(n+v)}\right)&=v
\log\left(1-\frac{g_{n, v}^{(+)}}{2(n+v)}\right)\\
&=-v\left(\frac{g_{n, v}^{(+)}}{2(n+v)}+\frac{(g_{n, v}^{(+)})^2}{8(n+v)^2}\right)+O\left(\frac{v n^3 l_{n, v}^3}{(v+2n)^3}\right)\\
&=\frac{2n v l_{n, v} x}{v+2n}+l_{n, v}^2 x^2\left(\frac{n v^3}{(v+2n)^3}-\frac{2n^2 v}{(v+2n)^2}\right)+\widetilde{O}(n l_{n, v}^3). 
\endaligned \end{equation} 
Putting \eqref{s2} and \eqref{s1} into \eqref{s0}, we get that
$$\aligned\log\mathbb{P}(X_n\ge 1+l_{n, v} x)&=\frac{2nv l_{n, v} x}{v+2n}+l_{n, v}^2 x^2\left(\frac{n v^3}{(v+2n)^3}-\frac{2n^2 v}{(v+2n)^2}\right)-\frac{c_{n, v}^2 l_{n, v} x}{v+2n}\\
&\quad-\frac{c_{n, v}^2 v^2 l_{n, v}^2 x^2}{2(v+2n)^3}+2n l_{n, v} x-n l_{n, v}^2 x^2+O(n l_{n, v}^3)\\
&=-\frac{2n(n+v)}{2n+v}l_{n, v}^2 x^2+o(n l_{n, v}^2).\endaligned$$
 Hence, the condition $\lim\limits_{n\to\infty}\frac{v}{n}= \alpha$ helps us to get the following limit
 $$\lim_{n\to\infty}\frac{1}{n l_{n, v}^2}\log\mathbb{P}(X_{(n)}\ge 1+l_{n, v} x)=-\frac{2(1+\alpha)}{2+\alpha} x^2.$$ 
 
 The proof of the left moderate deviation in Theorem \ref{extremaxmdp} is similar to that of 
 the second item of Theorem \ref{extremaxdp}. We define 
 $$j_5=\max\{j: c_{n, v} (1-l_{n, v} x) /v>x_j\}.$$  
Lemma \ref{lelowerinf} again, with $a_{n, v}=1-l_{n, v} x,$ ensures that  
$$\log\mathbb{P}(X_j\le 1-l_{n, v} x)=\tilde{O}(\log n)
$$
for any $j\le  j_5,$ while by contrast  
$$\aligned &\quad \log \mathbb{P}(X_j\le 1-l_{n, v} x)
\\
&=(2j+v-1)\log v+(2j+v)(1-\log 2)-(j+v-1/2)\log(j+v)-(j-1/2)\log j\\
 &\quad -v \, u(c_{n, v}(1-l_{n, v} x)/v)+(2j-1)\log(c_{n, v}(1-l_{n, v} x)/v)+\tilde{O}(\log n)\\
 \endaligned $$ 
 for any $j>j_5.$
 Hence, we have that  
 \begin{equation}\label{leftmoderates}\log\mathbb{P}(X_{(n)}\le 1-l_{n, v} x)=\sum_{j=j_5+1}^n \log\mathbb{P}(X_j\le 1-l_{n, v} x)+\tilde{O}(n\log n).\end{equation}
Replacing $x$ and $j_4$ in the expression \eqref{leftdpe} by $1-l_{n, v} x$ and $j_5,$ respectively, we have   
\begin{equation}\label{leftmoderate}\aligned 
& \sum_{j=j_5+1}^n \log\mathbb{P}(X_j\le 1-l_{n, v} x)\\
=&-v(n-j_5)\left(u\left(\frac{c_{n, v}\left(1-l_{n, v} x\right)}{v}\right)- \frac 3 2+\log 2+\log\left(1+\frac nv\right)\right)+\frac{j_5^2}{2}\log \frac{j_5}{n}\\
&+(n^2-j_5^2)\left(\log (1-l_{n, v} x)+\frac32\right)+\frac{(j_5+v)^2}{2}\log\frac{j_5+v}{n+v}+\tilde{O}(n\log n).
\endaligned \end{equation} 
Recall the condition  
 $$\log n/n<\!< l_{n, v}^3<\!<1,$$ which implies $n\log n=o(n^2 l_{n, v}^3).$ 
It follows from \eqref{leftmoderates} and \eqref{leftmoderate} that 
\begin{equation}\label{sumsum}\aligned &\quad\log\mathbb{P}(X_{(n)}\le 1-l_{n, v} x)\\
&=-v(n-j_5)\left(u\left(\frac{c_{n, v}\left(1-l_{n, v} x\right)}{v}\right)- \frac 3 2+\log 2+\log\left(1+\frac nv\right)\right)+\frac{j_5^2}{2}\log \frac{j_5}{n}\\
&\quad+(n^2-j_5^2)\left(\log (1-l_{n, v} x)+\frac{3}{2} \right)+\frac{(j_5+v)^2}{2}\log\frac{j_5+v}{n+v}+o(n^2l_{n, v}^3).\endaligned\end{equation}
We need treat the term $u(c_{n, v} (1-l_{n, v} x)/v).$ 
As above, similarly set 
$$d_{n, v}^{(-)}:=\sqrt{v^2+c_{n, v}^2(1-l_{n, v} x)^2} \quad \text{and} \quad g_{n, v}^{(-)}:=2n+v-d_{n, v}^{(-)}.$$ 
Using instead the following Taylor formula 
$$\sqrt{1+x}=1+\frac{x}{2}-\frac{x^2}{8}+\frac{x^3}{16}+o(x^3),$$ applying the relationship $c_{n, v}^2+v^2=(v+2n)^2$ and ignoring terms relating to $l_{n, v}^k$ for $k\ge 4,$ one obtains the following asymptotic  
$$\aligned \frac{d_{n, v}^{(-)}}{v+2n}&=\sqrt{1+\frac{c_{n, v}^2(-2 l_{n, v} x+l_{n, v}^2 x^2)}{(v+2n)^2}}&\\
&=\left(1-\frac{c_{n, v}^2 l_{n, v} x}{(v+2n)^2}+\frac{c_{n, v}^2 v^2 l_{n, v}^2 x^2}{2(v+2n)^4}+\frac{c_{n, v}^4 v^2 l_{n, v}^3 x^3}{2(v+2n)^6}\right)+o\left(\frac{nl_{n, v}^3}{v+2n}\right)\endaligned $$ 
and then 
$$g_{n, v}^{(-)}=\frac{c_{n, v}^2 l_{n, v} x}{v+2n}-\frac{c_{n, v}^2 v^2 l_{n, v}^2 x^2}{2(v+2n)^3}-\frac{c_{n, v}^4 v^2 l_{n, v}^3 x^3}{2(v+2n)^5}+o(n l_{n, v}^3).$$ 
These two asymptotics imply the following expression
\begin{equation}\label{sfor1}\aligned & \quad v \, u(c_{n, v}(1-l_{n, v} x)/v)-(v+2n)-v\log\frac{v}{2(n+v)}\\
&=-g_{n, v}^{(-)}-v\log\left(1-\frac{g_{n, v}^{(-)}}{2(n+v)}\right)\\
&=-g_{n, v}^{(-)}+v\left(\frac{g_{n, v}^{(-)}}{2(n+v)}+\frac{(g_{n, v}^{(-)})^2}{8(n+v)^2}+\frac{(g_{n, v}^{(-)})^3}{24(n+v)^3}\right)+o(nl_{n, v}^3)\\
&=-2n l_{n, v} x+\frac{n v l_{n, v}^2 x^2}{2n+v}+\frac{2n^2 v(3v+2n)l_{n, v}^3 x^3}{3(v+2n)^3}+o(n l_{n, v}^3).\endaligned \end{equation}
Putting \eqref{sfor1} back into \eqref{sumsum}, and combining alike terms, we obtain that
\begin{equation}\label{loglnv}\aligned &\quad \log\mathbb{P}(X_{(n)}\le 1-l_{n, v} x)\\
&=-2n(n-j_5)+\frac{(n-j_5)v}2+(n-j_5)\left(2n l_{n, v} x-\frac{nvl_{n, v}^2 x^2}{2n+v}-\frac{2n^2 v(3v+2n)l_{n, v}^3 x^3}{3(v+2n)^3}\right)\\
&\quad+(n^2-j_5^2)\log(1-l_{n, v} x)+\frac{3}{2}(n^2-j_5^2)+\frac{j_5^2}{2}\log\frac{j_5}{n}+\frac{(j_5+v)^2}{2}\log\frac{j_5+v}{n+v}+o(n^2 l_{n, v}^3).
\endaligned\end{equation}
The information on $j_5$ is obliged to be done for the desired limit corresponding to $\log\mathbb{P}(X_{(n)}\le 1-l_{n, v} x)$. By definition, with the help of the properties of $x_j$ and the expression of $g_{n, v}^{(-)}$ we know  that 
$$\aligned \frac{j_5}{n}&=\frac{-v+\sqrt{v^2+c_{n, v}^2(1-l_{n, v} x)^2}}{2n}+O(n^{-1})\\
&=-\frac{g_{n, v}^{(-)}}{2n}+1+O(n^{-1})\\
&=1-\frac{c_{n, v}^2 l_{n, v} x}{2n(v+2n)}+\frac{c_{n, v}^2 v^2 l_{n, v}^2 x^2}{4n(v+2n)^3}+\frac{c_{n, v}^4 v^2 l_{n, v}^3 x^3}{4n(v+2n)^5}+o(l_{n, v}^3)\\
&=1-\frac{2(v+n)l_{n, v} x}{2n+v}+\frac{(n+v) v^2 l_{n, v}^2 x^2}{(2n+v)^3}+\frac{4n(n+v)^2v^2l_{n, v}^3 x^3}{(v+2n)^5}+o(l_{n, v}^3). \endaligned $$ 
Set
$$e_{n, v}=\frac{2(v+n)}{2n+v}, \quad f_{n, v}=\frac{(n+v) v^2 }{(2n+v)^3} \quad \text{and}\quad h_{n, v}=\frac{4n(n+v)^2v^2}{(v+2n)^5}$$ to simplify  
\begin{equation}\label{j5f}\frac{j_5}{n}=1-e_{n, v}l_{n, v} x+f_{n, v} l_{n, v}^2 x^2+h_{n, v}l_{n, v}^3 x^3+o(l_{n, v}^3),\end{equation} 
which indicates that $j_5=n+O(n l_{n, v}).$ Further, applying the Taylor formula on $\log(1+x)$ to have $$
\aligned (n^2-j_5^2) \log(1-l_{n, v } x)&=-(n^2-j_5^2)\left(l_{n, v}x+\frac{l_{n, v}^2 x^2}{2}\right)+o(n^2 l_{n, v}^3)\\
&=-2n(n-j_5)\left(l_{n, v}x+\frac{l_{n, v}^2 x^2}{2}\right)+(n-j_5)^2l_{n, v}x+o(n^2 l_{n, v}^3) \endaligned $$ and     
$$\aligned \quad j_5^2\log\frac{j_5}{n}&=-(n^2+(n-j_5)^2-2n(n-j_5))\left(\frac{n-j_5}{n}+\frac{(n-j_5)^2}{2n^2}+\frac{(n-j_5)^3}{3n^3}\right)+o(n^2 l_{n, v}^3) \\
&=-n(n-j_5)+\frac32 (n-j_5)^2-\frac{(n-j_5)^3}{3n}+o(n^2 l_{n, v}^3)
\endaligned $$
and similarly 
$$\aligned
(j_5+v)^2\log\frac{j_5+v}{n+v}&=-(j_5-n+n+v)^2\left(\frac{n-j_5}{n+v}+\frac{(n-j_5)^2}{2(n+v)^2}+\frac{(n-j_5)^3}{3(n+v)^3}\right)+o(n^2 l_{n, v}^3)\\
&=-(n+v)(n-j_5)+\frac32 (n-j_5)^2-\frac{(n-j_5)^3}{3(n+v)}+o(n^2 l_{n, v}^3).
\endaligned $$ 
Plugging all these asymptotics into \eqref{loglnv}, 
arranging the terms according to different orders of $(n-j_5),$ keeping in mind that $n-j_5=O(n l_{n, v})$ and ignoring the terms belonging to $o(n^2 l_{n, v}^3),$ we are able to rewrite the expression \eqref{loglnv} for $\log\mathbb{P}(X_{(n)}\le 1-l_{n, v} x)$ 
as 
\begin{equation}\label{fsum}\aligned \log\mathbb{P}(X_{(n)}\le 1-l_{n, v} x)&=-(n-j_5)\left(\frac{nv l_{n, v}^2 x^2}{v+2n}+n l_{n, v}^2 x^2\right)+(n-j_5)^2l_{n, v} x\\
&\quad -(n-j_5)^3\left(\frac{1}{6n}+\frac{1}{6(n+v)}\right)+o(n^2 l_{n, v}^3)\\
&=-n e_{n, v}(n-j_5)l_{n, v}^2 x^2+(n-j_5)^2l_{n, v} x-\frac{(n-j_5)^3}{3n e_{n, v}}+o(n^2 l_{n, v}^3).
\endaligned\end{equation}
Inserting the expression \eqref{j5f} of $j_5/n$ into \eqref{fsum} and only keeping the terms up to $O(n^2 l_{n, v}^3)$, we know in further that 
$$\aligned \quad \log\mathbb{P}(X_{(n)}\le 1-l_{n, v} x)&=-n^2e_{n, v}^2 l_{n, v}^3 x^3+n^2 e_{n, v}^2 l_{n, v}^3 x^3-\frac{n^2 e_{n, v}^2 l_{n, v}^3 x^3}{3}+o(n^2 l_{n, v}^3)\\
&=-\frac{n^2 e_{n, v}^2 l_{n, v}^3 x^3}{3}+o(l_{n, v}^3).
\endaligned$$
Jointing the expressions of $e_{n, v},$ we obviously have that 
$$\lim_{n\to\infty}\frac1{n^2 l_{n, v}^3}\log\mathbb{P}(X_{(n)}\le 1-l_{n, v} x)=-\frac{4(1+\alpha)^2}{3(2+\alpha)^2} x^3,$$ 
which is the desired left moderate deviation.   
 
 \subsection{Proof of Theorem \ref{extreminmdp} when $v\to\infty$}
 A similar argument, as for Theorem \ref{extremindp}, will work here. For $x>0,$ define 
 $$j_6:=\max\{j: \; c_{n, v} l_{n, v} x/v>x_j\}.$$  
  It holds, due to Lemma \ref{asymlem2} with $a_{n, v}=l_{n, v} x,$ that 
 $$ \aligned 
 	\log\mathbb{P}(X_j\ge l_{n, v} x)
 	= & (2j+v-1)\log v+(2j+v)(1-\log 2)-(j+v-1/2)\log(j+v)\\
	&-j\log j-v \, u(c_{n, v} l_{n, v} x/v)+(2j-1)\log(c_{n, v} l_{n, v} x/v)+\tilde{O}(\log n)\\
\endaligned $$ 
for $j\le j_6,$ and 
$$\log\mathbb{P}(X_j\ge l_{n, v} x)=\tilde{O}(\log n )$$ 
for the case when $j>j_6.$
Lemma \ref{ma} helps us to get that 
 \begin{equation}\label{minsum}\aligned 
 	& \sum_{j=1}^{j_6}\log\mathbb{P}(X_j\ge l_{n, v} x) \\
 	= & (j_6^2+j_6 v)\log v+(j_6^2+j_6 v)(1-\log 2)+\frac{j_6^2+j_6 v}{2}-\frac{(j_6+v)^2}{2}\log(j_6+v)\\
 &+\frac{v^2}{2}\log v-\frac{j_6^2}{2}\log j_6+j_6^2\log\frac{c_{n, v} l_{n, v} x}{v} -v\,j_6 \, u\left(c_{n, v} l_{n, v} x/v\right)+o(j_6\log n).\\
 \endaligned \end{equation}
 Detailed information on $j_6$ is necessary.   
 Combining the definition, and the fact that
 $$2\sqrt{(j-3/4)(j+v-3/4)} \le v x_j\le 2\sqrt{(j-1/4)(j+v-1/4)},$$
 we know that \begin{equation}\label{j6l} 4(j_6-1)(j_6+v-1)<c_{n, v}^2 l_{n, v}^2 x^2<4(j_6+1)(j_6+v+1).\end{equation}
 Thus, 
 \begin{equation}\label{j6p}j_6=(\sqrt{v^2+c_{n, v}^2 l_{n, v}^2 x^2}-v)/2+O(1)\end{equation}
 for $n$ large enough. 
The inequality \eqref{j6l} implies that  $\lim_{n\to\infty} \frac{4 j_6(j_6+v)}{c_{n, v}^2 l_{n, v}^2 x^2}=1.$ Replacing the term $j_6^2\log (c_{n, v} l_{n, v} x)$ in \eqref{minsum} by $j_6^2\log(4 j_6(j_6+v))/2+o(j_6^2),$ and combining alike terms, the expression \eqref{minsum} turns out to be 
$$\aligned &\quad \sum_{j=1}^{j_5}\log\mathbb{P}(X_j\ge l_{n, v} x)\\
 &=-(v^2/2+j_5 v)\log \left(1+\frac{j_5}{v}\right)+3 j_5^2/2+j_5 v\left(\frac 3 2 -\log 2\right)\\
&\quad -\,j_5\sqrt{v^2+c_{n, v}^2 l_{n, v}^2 x^2}+v j_5\log\left(1+\sqrt{1+c_{n, v}^2 l_{n, v}^2 x^2/v^2}\right) +o(j_5^2)\\
&=-\frac{v^2}{2}\log \left(1+\frac{j_5}{v}\right)-\frac12 j_5^2+\frac{1}{2}j_5 v+vj_5\log\frac{v+\sqrt{v^2+c_{n, v}^2 l_{n, v}^2 x^2}}{2(v+j_5)}+o(j_5^2)\\
&=-\frac{v^2}{2}\log \left(1+\frac{j_5}{v}\right)-\frac12 j_5^2+\frac{1}{2}j_5 v+o(j_5^2).
 \endaligned
 $$  
Here, for both the second equality and the last one, we use twice the expression  
\eqref{j6p}
and the term $o(j_6\log n)$ disappears because that the condition on $l_{n, v}$ guarantees $j_6>\!> \log n.$   
 Eventually, since $X_{(1)}=\min_{1\le i\le n} X_i,$ one gets that
$$\aligned \log\mathbb{P}(X_{(1)}\ge l_{n, v} x)&=\sum_{j=1}^{j_4}\log\mathbb{P}(X_{j}\ge l_{n, v} x)+\sum_{j=j_4+1}^n\log\mathbb{P}(X_{j}\ge l_{n, v} x)\\
&=-\frac{v^2}{2}\log \left(1+\frac{j_6}{v}\right)-\frac12 j_6^2+\frac{1}{2}j_6 v+o(j_6^2+n\log n).
 \endaligned $$
 Observing this expression, we see that the limit relating to $\log\mathbb{P}(X_{(1)}\ge l_{n, v} x)$ involves three terms $v, j_6$ and $c_{n, v} l_{n, v}.$
 
 We first work on the simpler case $\alpha\in (0, \infty],$ which indicates $c_{n, v}=O(v)$ and then $v>\!>c_{n, v} l_{n, v},$ since $l_{n, v}<\!< 1.$ Therefore, the asymptotic \eqref{j6p} tells us that 
 $$2j_6=\frac{c_{n, v}^2 l_{n, v}^2 x^2}{v+\sqrt{v^2+c_{n, v}^2 l_{n, v}^2 x^2}}+O(1)=\frac{c^2_{n, v}l^2_{n, v}}{2v} x^2+o(c^2_{n, v}l^2_{n, v}/v).$$
 Hence, $j_6=O(v l_{n, v}^2)=o(v),$ and $$\frac{n \log n}{j_6^2}=O\left(\frac{ n \log n }{v^2 l_{n, v}^4}\right)=O\left(\frac{  \log n }{n l_{n, v}^4}\right)=o(1),$$ since $l_{n, v}>\!> (\log n/n)^{1/4}$ by conditions. With the help of the Taylor expansion $\log(1+x)=x-x^2/2+o(x^2)$ for $|x|$ small enough, we have that 
$$\aligned \log\mathbb{P}(X_{(1)}\ge l_{n, v} x)=-\frac14 j_6^2+o(j_6^2).
 \endaligned $$ 
 Immediately, it follows that 
 $$\lim_{n\to\infty}\frac{1}{n^2 l_{n, v}^4}\log\mathbb{P}(X_{(1)}\ge l_{n, v} x)=-\lim_{n\to\infty}\frac{c_{n, v}^4}{64 n^2 v^2}x^4=-\lim_{n\to\infty}\frac{(n+v)^2}{4v^2}x^4=-\frac{\left(1+\alpha\right)^2}{4\alpha^2} x^4.$$ 
 Now, we consider the case that $v=\widetilde{O}(\sqrt{n\log n})$. This ensures $v<\!< c_{n, v} l_{n, v},$ and then the asymptotic \eqref{j6p} implies that 
 $$2j_6=\frac{c_{n, v}^2 l_{n, v}^2 x^2}{v+\sqrt{v^2+c_{n, v}^2 l_{n, v}^2 x^2}}+O(1)=c_{n, v}l_{n, v} x+o(c_{n, v}l_{n, v}).$$ 
 Hence, $v=o(j_6),$ and 
 $$\frac{n \log n}{j_6^2}=O\left(\frac{n \log n }{c^2_{n, v}l^2_{n, v}}\right)=O\left(\frac{  \log n }{n l_{n, v}^2}\right)=o(1)$$ since $l_{n, v}>\!> (\log n/n)^{1/2}$ by conditions.
 Then $$\log\mathbb{P}(X_{(1)}\ge l_{n, v} x)=-\frac12 j_6^2+o(j_6^2).
 $$
As a consequence, it follows 
 $$\lim_{n\to\infty}\frac{1}{n^2 l_{n, v}^2}\log\mathbb{P}(X_{(1)}\ge l_{n, v} x)=-\lim_{n\to\infty}\frac{(n+v) x^2}{2 n}=-\frac12 x^2.$$  
 We are going to discuss the last case that $\sqrt{n\log n}<\!< v<\!< n.$ Taking $l_{n, v}=v/n,$ it follows again from  \eqref{j6p} that  
 $$\lim_{n\to\infty}\frac{j_6}{v}=\lim_{n\to\infty}\frac{2(n+v)x^2}{n+\sqrt{n^2+4n(n+v) x^2}}=\frac{2x^2}{1+\sqrt{1+4x^2}}.$$
 This brings that   
 $$\lim_{n\to\infty}\frac{1}{v^2}\log\mathbb{P}(X_{(1)}\ge v x/n)=-\frac{1}{2}\log\frac{1+\sqrt{1+4x^2}}{2}-\frac{1+x^2}{2}+\frac{1}{2}\sqrt{1+4x^2}. $$ 
 For the case when $v/n<\!<l_{n, v}<\!<1,$ we have instead 
 $$\lim_{n\to\infty}\frac{j_6}{n l_{n, v}}=x.$$ 
 Therefore, it holds that
 $$\lim_{n\to\infty}\frac{1}{n^2 l_{n, v}^2}\log\mathbb{P}(X_{(1)}\ge l_{n, v} x)=-\frac{x^2}{2}.$$
 The proof is then completed. 
 
 \section{Appendix} 
 In this section, we provide the proofs of Lemmas \ref{asymlem2} and \ref{lelowerinf}.
 \begin{proof}[\bf Proof of Lemma \ref{asymlem2}] The expression \eqref{keyex} reminds us to work on the integral $\int_{c_{n, v} a_{n, v}/v}^{+\infty} e^{-v \tau_j(t)} dt$ and the factor.  
Utilize the Stirling formula 
$$\log \Gamma(z)=(z-1/2)\log z-z+O(1)$$ for $z$ large enough
to write 
\begin{equation}\label{asymz}\aligned 
	\log \frac{2^{v+1/2}v^{2j+v-1}\Gamma(j+v+1/2)}{\,\Gamma(2j+2v)\,\Gamma(j)}
	=&(2j+v)(1-\log 2)-(j-1/2)\log (j/v)\\
	&-(j+v-1/2)\log(1+j/v)+O(1)\\
\endaligned \end{equation}
for $j$ large enough, and this asymptotic still holds for $j$ bounded since both $\Gamma(j)$ and $(j-1/2)\log j$ inherit the boundedness of $j.$   
When $c_{n, v} a_{n, v}/v>x_j,$ it follows from Lemma \ref{keyl}
that 
$$\log\left( \sqrt{v}\int_{c_{n, v}a_{n, v}/v}^{+\infty} e^{-v\tau_j(t)} dt\right)\le -\log (\sqrt{v}\tau_j'(c_{n, v} a_{n, v}/v))-v\tau_j(c_{n, v} a_{n, v}/v).$$ 
By definition, 
$$\tau_j'(c_{n, v} a_{n, v}/v)=\frac{c_{n, v} a_{n, v}}{v+\sqrt{v^2+c_{n, v}^2 a_{n, v}^2}}+\frac{c_{n, v} a_{n, v} v}{2(v^2+c_{n, v}^2 a_{n, v}^2)}-\frac{2j-1}{c_{n, v} a_{n, v}}.$$
When $v>\!>c_{n, v} a_{n, v} $ or $v=O(c_{n, v} a_{n, v}),$ we see the unboundedness of  
$$\sqrt{v}\tau_j'(c_{n, v} a_{n, v}/v)=\frac{c_{n, v} a_{n, v}}{\sqrt{v}}+o\left(\frac{c_{n, v} a_{n, v}}{\sqrt{v}}\right), $$ which is due to the fact $c_{n, v} a_{n, v}/\sqrt{v}\ge 2\sqrt{n} a_{n, v}\to+\infty.$   
By conditions, we have 
$$2\sqrt{n} a_{n, v}\le c_{n, v} a_{n, v}/\sqrt{v}\le 2 n a_{n, v}.$$
This implies that
$$\log(\sqrt{v} \tau_j'(c_{n, v}a_{n, v}/v))=\tilde{O}(\log n ).$$ 	
For the case $v<\!<a_{n, v} c_{n, v},$ we see $v<\!<n$ and
$\tau_j'(c_{n, v} a_{n, v}/v)=O(1).$ Therefore 
$$\sqrt{v} \tau_j'(c_{n, v}a_{n, v}/v)=\sqrt{v}+o(\sqrt{v})\to \infty, $$ and furthermore, 
$$\log(\sqrt{v}\tau_j'(c_{n, v} a_{n, v}/v))=O(\log v)=\tilde{O}(\log n).$$ 
Also, when $j$ satisfying $c_{n, v} a_{n, v}/v>x_j,$ Lemma \ref{keyl} entails that 
$$\aligned \log\left(\sqrt{v} \int_{c_{n, v}a_{n, v}/v}^{+\infty} e^{-v\tau_j(t)} dt\right)&\ge -\log (\sqrt{v}\,\tau_j'(M))-v\tau_j(c_{n, v} a_{n, v}/v)\\
&\quad +\log(1-e^{-v \tau_j(M)+v\tau_j(c_{n, v}a_{n, v}/v)}) \endaligned $$
for any $M>c_{n, v} a_{n, v}/v.$ 
Choosing $M=c_{n, v}(1+a_{n, v})/v.$ Similarly, we see 
$$\log(\sqrt{v} \tau_j'(M))=\tilde{O}(\log n).$$ 
For the term $v \tau_j(M)-v\tau_j(c_{n, v}a_{n, v}/v),$ since $\tau_j'$ is increasing on $[x_j, \infty),$ we know 
$$v\tau_j(M)-\tau_j(c_{n, v} a_{n, v}/v)\ge \tau_j'(c_{n, v} a_{n, v}/v)(M-c_{n, v} a_{n, v}/v)= \tau_j'(c_{n, v} a_{n, v}/v) c_{n, v}.$$
The analysis above shows that $$\tau_j'(c_{n, v} a_{n, v}/v) c_{n, v}=c_{n, v}/\sqrt{v}\times \sqrt{v}\tau_j'(c_{n, v} a_{n, v}/v),$$ which tends to the positive infinity since both $\sqrt{v}\tau_j'(c_{n, v} a_{n, v}/v)$ and $c_{n, v}/v$ tend to the positive infinity. Hence, we claim that 
$$\log(1-e^{-v \tau_j(M)+v\tau_j(c_{n, v}a_{n, v}/v)})=o(1).$$ 
Therefore, 
\begin{equation}\label{asymlessb2}\aligned 
	\log \mathbb{P}(X_j\ge a_{n, v})
	= & -v\tau_j(c_{n, v} a_{n, v}/v)+(2j+v)(1-\log 2)-(j-1/2)\log (j/v)\\
	&-(j+v-1/2)\log(1+j/v)+\tilde{O}(\log n)
\endaligned \end{equation}
for $n$ large enough and for any $j$ with $c_{n, v} a_{n, v}/v>x_j.$  
Putting the expression $$\tau_j(y)=u(y)-\frac{2j-1}{v}\log y$$ into \eqref{asymlessb2}, we have that
$$\aligned \log \mathbb{P}(X_j\ge a_{n, v})&=(2j+v-1)\log v+(2j+v)(1-\log 2)-(j+v-1/2)\log(j+v)\\
 &\quad-j\log j-v \, u(c_{n, v} x/v)+(2j-1)\log(c_{n, v}x/v)+\tilde{O}(
 \log n).
\endaligned$$
Now we suppose that $c_{n, v} a_{n, v}/v\le x_j.$ 
Lemma \ref{keyl} implies the following two inequalities 
 $$\aligned \log \left(\sqrt{v}\int_{c_{n, v}a_{n, v}/v}^{+\infty} e^{-v\tau_j(t)} dt\right)&\le -v\tau_j(x_j)+\log\left(4j\right);
\\
\log \left(\sqrt{v}\int_{c_{n, v}a_{n, v}/v}^{+\infty} e^{-v\tau_j(t)} dt\right)&\ge -v\tau_j(x_j)-\log (\sqrt{v}\tau_j'(M))+\log(1-e^{-v \tau_j(M)+v\tau_j(c_{n, v}a_{n, v}/v)}) \endaligned $$
 for any $M>x_j.$  
 Choose $M=c_{n, v}(1+a_{n, v})/v,$ which verifies the condition $M>x_j,$ since $c_{n, v}a_{n, v}/v>x_j.$ 
A similar argument leads again that $\log(\sqrt{v}\tau_j'(M))=\tilde{O}(\log n )$ and $$\log(1-e^{-v \tau_j(M)+v\tau_j(c_{n, v}a_{n, v}/v)})=o(1).$$
 Therefore, we have that
 $$\log \left(\sqrt{v}\int_{c_{n, v}a_{n, v}/v}^{+\infty} e^{-v\tau_j(t)} dt\right)=-v\tau_j(x_j)+\tilde{O}(\log n ).$$
 Combining this asymptotic with the expression \eqref{asymz}, we get 
 that 
 \begin{equation}\label{asymlessb0}\aligned\log \mathbb{P}(X_j\ge a_{n, v})&=-v\tau_j(x_j)+(2j+v)(1-\log 2)-(j-1/2)\log (j/v)\\
&\quad -(j+v-1/2)\log(1+j/v)+\tilde{O}(\log n)\endaligned \end{equation} for $j$ such that 
$c_{n, v} a_{n, v}/v\le x_j.$ 
The second item of Lemma \ref{keyl} says $$4(j-3/4)(j-3/4+v)\le v^2 x_j^2\le 4(j-1/2)(j-1/2+v),$$ which implies  
$$\sqrt{v^2+v^2x_j^2}=v+2j+O(1), \quad \log(v^2 x_j^2)=\log\frac{4j(j+v)}{v^2}+o(1).$$ Thus, by definition 
$$\aligned v \tau_j(x_j)&=\sqrt{v^2+v^2x_j^2}-v \log\left(1+\sqrt{1+v^2 x_j^2}\right)-(2j-1)\log x_j\\
&=v+2j-v \log\frac{2(j+v)}{v}-(j-\frac12)\log\frac{4j(j+v)}{v^2}+O(1). \endaligned $$ 
Putting this back into \eqref{asymlessb0} and combining like terms, we get 
$$\aligned\log \mathbb{P}(X_j\ge a_{n, v})&=\tilde{O}(\log n).\endaligned$$ 
\end{proof}

\begin{proof}[\bf Proof of Lemma \ref{lelowerinf}]  The argument will be similar to that of Lemma \ref{asymlem2}. 
For the second item, we only need to prove that 
$$\log\left( \sqrt{v}\int_0^{c_{n, v}a_{n, v}/v} e^{-v\tau_j(t)} dt\right)=-v\tau_j(c_{n, v} a_{n, v}/v)+\tilde{O}(\log n).$$
Lemma \ref{keyl} guarantees 
that 
$$\log\left( \sqrt{v}\int_0^{c_{n, v}a_{n, v}/v} e^{-v\tau_j(t)} dt\right)\le -\log (-\sqrt{v}\tau_j'(c_{n, v} a_{n, v}/v))-v\tau_j(c_{n, v} a_{n, v}/v)$$ and 
$$\aligned \log\left( \sqrt{v}\int_0^{c_{n, v}a_{n, v}/v} e^{-v\tau_j(t)} dt\right)&\ge -\log(-\sqrt{v}\tau_j'(c_{n, v} a_{n, v}/(2v)))-v\tau_j(c_{n, v}a_{n, v}/v)\\
&\quad +\log\left(1-e^{v\tau_j(c_{n, v}a_{n, v}/v)-v\tau_j(c_{n, v}a_{n, v}/(2v))}\right),\endaligned $$
once 
$c_{n, v} a_{n, v}/v\le x_j.$ 
Hence, it remains to prove that 
$$\log(-\sqrt{v} \tau_j'(c_{n, v} a_{n, v}/v))=\tilde{O}(\log n), \quad e^{v\tau_j(c_{n, v}a_{n, v}/v)-v\tau_j(c_{n, v}a_{n, v}/(2v))}=o(1).$$
Indeed, using the precise form of $\tau_j,$ we have that 
$$\aligned &\quad v\tau_j(c_{n, v}a_{n, v}/v)-v\tau_j(c_{n, v}a_{n, v}/(2v))\\
&=-(2j-1)\log 2+\sqrt{v^2+c_{n, v}^2 a_{n, v}^2}-\sqrt{v^2+c_{n, v}^2 a_{n, v}^2/4}\\
&\quad +v \log\left(1+\frac{\sqrt{v^2+c_{n, v}^2 a_{n, v}^2/4}-\sqrt{v^2+c_{n, v}^2 a_{n, v}^2}}{v+\sqrt{v^2+c_{n, v}^2 a_{n, v}^2}}\right)\\
&\le -(2j-1)\log 2+\sqrt{v^2+c_{n, v}^2 a_{n, v}^2}-\sqrt{v^2+c_{n, v}^2 a_{n, v}^2/4}.
\endaligned $$
The condition that $c_{n, v} a_{n, v}\le v x_j\le 2 \sqrt{j(j+v)}$ implies the following inequality
$$ \sqrt{v^2+c_{n, v}^2 a_{n, v}^2}\le v+ 2j.$$ Since,  $\sqrt{v^2+c_{n, v}^2 a_{n, v}^2/4}\ge v,$ it follows that  
$$v\tau_j(c_{n, v}a_{n, v}/v)-v\tau_j(c_{n, v}a_{n, v}/(2v))\le-(2\log 2-1) j+\log 2\to -\infty$$
as $j\to\infty.$ Hence, 
$$e^{v\tau_j(c_{n, v}a_{n, v}/v)-v\tau_j(c_{n, v}a_{n, v}/(2v))}=o(1).$$ 
It remains to prove that
$\log(-\sqrt{v} \tau_j'(c_{n, v} a_{n, v}/v))=\tilde{O}(\log n).$ 
Recall
$$-\tau_j'\left(\frac{c_{n, v} a_{n, v}}{v}\right)=-\frac{c_{n, v} a_{n, v}}{v+\sqrt{v^2+c_{n, v}^2 a_{n, v}^2}}-\frac{c_{n, v} a_{n, v} v}{2(v^2+c_{n, v}^2 a_{n, v}^2)}+\frac{2j-1}{c_{n, v} a_{n, v}}.$$ 
When $c_{n, v} a_{n, v}>\!>v,$ it follows from the fact $c_{n, v} a_{n, v}\le v x_j\le 2\sqrt{j(j+v)}$ that  
$v<\!< j$ and $j\ge n \,a_{n, v}+o(n \,a_{n, v}).$ Thus
$$\aligned -\sqrt{v}\,\tau_j'\left(\frac{c_{n, v} a_{n, v}}{v}\right)=\frac{2j\sqrt{v}}{c_{n, v}a_{n, v}}-\sqrt{v}+o(1)=O\left(\frac{j\sqrt{v}}{n a_{n, v}}\right)\to +\infty.
\endaligned $$
On the one hand, $j\le n$ says 
$$\log \frac{j\sqrt{v}}{n a_{n, v}}\le \log \frac{\sqrt{v}}{ \,a_{n, v}}=\log\frac{\sqrt{v}\sqrt{n}}{\sqrt{n} \;a_{n, v}}=\frac12\log n+o(\log n).$$
On the other hand, it is easy to see that
$$\log \frac{j\sqrt{v}}{n a_{n, v}}\ge \log \sqrt{v}. $$
 Consequently, we have the desired expression
$$\log(-\sqrt{v} \tau_j'(c_{n,v} a_{n, v}/v))=\tilde{O}(\log n).$$ 
The same argument works when $v=O(c_{n, v} a_{n, v}).$
Now we consider the last case that $v>\!>c_{n, v} a_{n, v},$ which indicates the following lower bound of $j$ as 
$$j\ge c_{n, v}^2 a_{n, v}^2/v+o(c_{n, v}^2 a_{n, v}^2/v).$$  Under this assumption, it holds that  
$$-\sqrt{v}\tau_j'(c_{n, v} a_{n, v}/v)=-\frac{c_{n, v} a_{n, v}}{2\sqrt{v}}+\frac{2j\sqrt{v}}{c_{n,v} a_{n, v}}+o(\sqrt{n+v} \, a_{n, v})=O\left(\frac{j\sqrt{v}}{c_{n,v} a_{n, v}}\right)\to+\infty $$
as $n\to\infty.$ The last limit is due to the fact that 
$$\frac{j\sqrt{v}}{c_{n,v} a_{n, v}}\ge \frac{c_{n,v} a_{n, v}}{\sqrt{v}}\ge 2\sqrt{n} a_{n, v}\to +\infty.$$
Similarly, using the inequality $\sqrt{n} a_{n, v}\le \frac{j\sqrt{v}}{c_{n,v} a_{n, v}}\le\frac{n}{2\sqrt{n} a_{n, v}},$ we claim that
$$\log(-\sqrt{v}\tau_j'(c_{n, v} a_{n, v}/v))=\tilde{O}(\log n).$$
Now, we prove the first item, which means working on $j$ with $c_{n, v} a_{n, v}/v>x_j.$  	
As for the second item of Lemma \ref{asymlem1}, it is enough to prove that 
\begin{equation}\label{enou}
\log\left( \sqrt{v}\int_0^{c_{n, v}a_{n, v}/v} e^{-v\tau_j(t)} dt\right)=-v\tau_j(x_j)+\tilde{O}(\log n).	
\end{equation}
Indeed, Lemma \ref{estioninte} entails again that 
$$\aligned &\log\left( \sqrt{v}\int_0^{c_{n, v}a_{n, v}/v} e^{-v\tau_j(t)} dt\right)\le\log\frac{c_{n, v} a_{n, v}}{\sqrt{v}}-v\tau_j(x_j) ;\\
&\log\left( \sqrt{v}\int_0^{c_{n, v}a_{n, v}/v} e^{-v\tau_j(t)} dt\right)\ge-\log (-\sqrt{v}\tau_j'(M))+\log\left(1-e^{v\tau_j(x_j)-v\tau_j(M)}\right)-v\tau_j(x_j)
\endaligned 
$$
for any $0<M<x_j.$
It is clear that 
$$\log \frac{c_{n, v} a_{n, v}}{\sqrt{v}}=\tilde{O}(\log n).$$
Choose $M=\sqrt{j(j+4v)}/(2v).$ The remainder of the proof will be focused on 
$$\log(-\sqrt{v}\tau_j'(\sqrt{j(j+4 v)}/(2v)))=\tilde{O}(\log n) \quad \text{and} \quad v\tau_j(x_j)-v\tau_j(\sqrt{j(j+4 v)}/(2v))\to -\infty.$$ 
By definition, we have that 
$$-\sqrt{v} \tau_j'(\sqrt{j(j+4 v)}/(2v))=3\sqrt{ v j/(j+4v)}+o(\sqrt{ v j/(j+4v)})\to +\infty.$$
Hence, 
$$\log(-\sqrt{v}\tau_j'(\sqrt{j(j+4 v)}/(2v)))=\tilde{O}(\log n).$$ 
Meanwhile, it follows from the definition of $\tau_j$
and the fact $x_j>\sqrt{j(j+4  v)}/(2v)$ that $$\aligned & \quad v\tau_j(x_j)-v\tau_j(\sqrt{j(j+4  v)}/(2v))\\
&\le v\sqrt{1+x_j^2}-v-j/2-(2j-1)\log(2 v x_j/\sqrt{j(j+4v)})\\
&\le v+2j-v-j/2-(2j-1)\log(4 \sqrt{(j-1)(j-1+v)}/\sqrt{j(j+4v)})\\
&\le (3/2-2\log 2) j, \endaligned $$
which tends to $-\infty$ as $n\to\infty.$ The proof is then accomplished. 
  \end{proof}

\subsection*{Conflict of interest}  The authors have no conflicts to disclose.

\subsection*{Acknowledgment} We would like to express our sincere gratitude to the anonymous referee for their invaluable feedback and constructive comments, which significantly contributed to the improvement of this manuscript.


\begin{thebibliography}{SOSL90}
\bibitem{AS} 
M. Abramowitz and I. A. Stegun, \emph{Handbook of Mathematical Functions}, 2nd ed., Dover Publications, New York, 1972.

\bibitem{Akemann} 
G. Akemann, The complex Laguerre symplectic ensemble of non-Hermitian matrices, \emph{Nuclear Phys. B} \textbf{730} (2005), 253-299.

\bibitem{ABDbook} 
G. Akemann, J. Baik, and P. Di Francesco, \emph{The Oxford Handbook of Random Matrix Theory}, Oxford University Press, Oxford, 2011.

\bibitem{AB} 
G. Akemann and M. Bender, Interpolation between Airy and Poisson statistics for unitary chiral non-Hermitian random matrix theory, \emph{J. Math. Phys.} \textbf{51} (2010), 103524.

\bibitem{ABK} 
G. Akemann, S.-S. Byun, and N.-G. Kang, A non-Hermitian generalization of the Marchenko-Pastur distribution: from the circular law to multi-criticality, \emph{Ann. Henri. Poincar\'e} \textbf{22} (2021), 1035-1068.

\bibitem{AnG} 
G. Anderson, A. Guionnet, and O. Zeitouni, \emph{An Introduction to Random Matrices}, Cambridge University Press, Cambridge, 2010.

\bibitem{BSbook} 
Z. D. Bai and J. Silverstein, \emph{Spectral Analysis of Large Dimensional Random Matrices}, 2nd ed., Springer, New York, 2009.

\bibitem{BDSbook} 
J. Baik, P. Deift, and P. Suidan, \emph{Combinatorics and Random Matrix Theory}, American Mathematical Society, USA, 2016.

\bibitem{Bender}
M. Bender, Edge scaling limits for a family of non-Hermitian random matrix ensembles, \emph{Probab. Th. Relat. Fields} \textbf{147} (2010), 241-271.
\bibitem{BForrester} 
S-S. Byun and J. Forrester, Progress on the study of the Ginibre ensembles I: GinUE, preprint (2022), arXiv:2211.16223.

\bibitem{Chafaii} D. Chafai and S. P\'ech\'e, A note on the second order universality at the edge of Coulomb gases on the plane, \emph{J. Stat. Phys.} \textbf{156} (2014), 368-383.
 
 \bibitem{JQ}S. Chang, T. Jiang, and Y. Qi, Eigenvalues of large chiral non-Hermitian random matrices, \emph{J. Math. Phys.} \textbf{61} (2020), 013508.
 \bibitem{Charlier03} C. Charlier,   Large gap asymptotics on annuli in the random normal matrix model. \emph{Math. Ann.} (2023), DOI: 10.1007/s00208-023-02603-z.
 \bibitem{Charlier22} C. Charlier, Asymptotics of determinants with a rotation-invariant weight and discontinuities along circles, \emph{Adv. Math.} \textbf{408} (2022), 108600.

\bibitem{DiF} 
P. Di Francesco, M. Gaudin, C. Itzykson, and F. Lesage, Laughlin's wave functions, Coulomb gases and expansions of the discriminant, \emph{Int. J. Mod. Phys. A} \textbf{9} (1994), 4257-4351.

\bibitem{Fenzl}
M. Fenzl and G. Lambert, Precise deviations for disk counting statistics of invariant determinantal processes, \emph{Int. Math. Res. Not.} \textbf{2022}(10) (2022), 7420-7494.

\bibitem{Forrester} 
P. J. Forrester, \emph{Log-gases and Random Matrices}, London Mathematical Society Monographs Series \textbf{34}, Princeton University Press, Princeton, NJ, 2010.

\bibitem{Grobe} 
R. Grobe, F. Haake, and H.-J. Sommers, Quantum distinction of regular and chaotic dissipative motion, \emph{Phys. Rev. Lett.}  \textbf{61} (1988), 1899-1902.

\bibitem{Johansson}
K. Johannsson, Shape fluctuations and random matrices, \emph{Comm. Math. Phys.} \textbf{209} (2000), 437-476.

\bibitem{KSbook}
B. A. Khoruzhenko and H. J. Sommers, Non-Hermitian Random Matrix Ensembles, in: \emph{Oxford Handbook of Random Matrix Theory}, Oxford University Press, Oxford, 2011.

 \bibitem{Kostlan} 
 E. Kostlan, On the spectra of Gaussian matrices, \emph{Lin. Alg. Appl.} \textbf{162} (1992), 385-388.

\bibitem{Lacroix}
B. Lacroix-A-Chez-Toine, A. Grabsch, S. N. Majumdar, and G. Schehr, Extremes of 2D Coulomb gas: universal intermediate deviation regime, \emph{J. Stat. Mech.} (2018), 013203.

\bibitem{LACT19} 
B. Lacroix-A-Chez-Toine, J. Garzon, A. M. Calva, S. H. Castillo, I. P. Kundu, S. N. Majumdar, and G. Schehr, Intermediate deviation regime for the full eigenvalue statistics in the complex Ginibre ensemble, \emph{Phys. Rev. E} \textbf{100} (2019), 012137.

\bibitem{LACT2019}
B. Lacroix-A-Chez-Toine, S.N. Majumdar, and G. Schehr, Rotating trapped fermions in two dimensions and the complex Ginibre ensemble: Exact results for the entanglement entropy and number variance, \emph{Phys. Rev. A} \textbf{99} (2019), 021602.

\bibitem{MP}
V. A. Marchenko and L. A. Pastur, Distributions of some sets of random matrices, \emph{Math. USSR-Sb} \textbf{1} (1967), 457-483.

\bibitem{MaLDP} Y. T. Ma, Unified limits and large deviation principles for $\beta$-Laguerre ensembles in global regime, \emph{Acta Math. Sin.} \textbf{39} (2013), 1271-1288.

\bibitem{Olver} 
F. W. J. Olver, D. W. Lozier, R. F. Boisvert, and C. W. Clark, \emph{NIST Handbook of Mathematical Functions}, Cambridge University Press, Cambridge, 2010.

\bibitem{osborn} 
J. C. Osborn, Universal results from an alternate random matrix model for QCD with a baryon chemical potential, \emph{Phys. Rev. Lett.} \textbf{93} (2004), 2220019.

\bibitem{Rider03} 
B.C. Rider, A limit theorem at the edge of a non-Hermitian random matrix ensemble, \emph{J. Phys. A} \textbf{36} (2003), 3401-3409.

\bibitem{Rider04} B.C. Rider, Order statistics and Ginibre' s ensembles, \emph{ J. Stat. Phys.} \textbf{114} (2004), 1139-1148.

\bibitem{Rider 14} B.C. Rider and C.D. Sinclair, Extremal laws for the real Ginibre ensemble, \emph{ Ann. Appl. Probab.} \textbf{24}(4) (2014), 1621-1651.
\bibitem{Stephanov} M. A. Stephanov, Random matrix model for qcd at finite density and the nature of the quenched limit.,\emph{ Phys. Rev. Lett.} \textbf{76} (1996), 4472-4475. 

\end{thebibliography}
\end{document}